\title{Formal category theory in $\infty$-equipments I}
\author{Jaco Ruit}
\address{Mathematisch Instituut, Universiteit Utrecht, The Netherlands}
\email{j.c.ruit@uu.nl} 
\newtheorem{theorem}{Theorem}
\newcommand{\jnewtheorem}[2]{
	\newaliascnt{#1}{theorem}
	\newtheorem{#1}[#1]{#2}
	\aliascntresetthe{#1} 
}
\numberwithin{theorem}{section}
\theoremstyle{definition}
\newcommand{\cat}[1]{\textup{#1}}
\newcommand{\op}{\mathrm{op}}
\newcommand{\im}{\mathrm{im}}
\newcommand{\map}{\mathrm{Map}}
\def\colim{\qopname\relax m{colim}}
\newcommand{\fun}{\mathrm{Fun}}
\newcommand{\Cat}{\cat{Cat}}
\newcommand{\CAT}{\cat{CAT}}
\newcommand{\sSet}{\cat{sSet}}
\newcommand{\Cocart}{\mathrm{Cocart}}
\newcommand{\lax}{\mathrm{lax}}
\newcommand{\opo}{\textup{\textrm{1-op}}}
\newcommand{\opt}{\textup{\textrm{2-op}}}
\renewcommand{\S}{\mathscr{S}}
\newcommand{\C}{\mathscr{C}}
\renewcommand{\D}{\mathscr{D}}
\renewcommand{\P}{\mathscr{P}}
\newcommand{\E}{\mathscr{E}}
\renewcommand{\Vert}{\mathrm{Vert}}
\newcommand{\Hor}{\mathrm{Hor}}
\newcommand{\F}{\mathscr{F}}
\newcommand{\Seg}{\mathrm{Seg}}
\newcommand{\tp}{\mathrm{t}}
\newcommand{\DFib}{\mathrm{DFib}}
\newcommand{\RFib}{\mathrm{RFib}}
\newcommand{\DbliCat}{\cat{Dbl}\infty\cat{Cat}}
\newcommand{\DblSeg}{\cat{DblSeg}}
\newcommand{\Equip}{\cat{Equip}}
\newcommand{\ev}{\mathrm{ev}}
\newcommand{\conj}{\mathfrak{conj}}
\newcommand{\comp}{\mathfrak{comp}}
\newcommand{\PSh}{\cat{PSh}}
\newcommand{\Sq}{\cat{Sq}}	
\newcommand{\abs}[1]{\left|#1\right|}
\newcommand{\id}{\mathrm{id}}
\newcommand*{\vrectangle}{{\ooalign{\lower.3ex\hbox{$\sqcup$}\cr\raise.4ex\hbox{$\sqcap$}}}}
\newcommand{\cart}[3]{{#1}^\circledast{#2}{#3}_\circledast}
\newcommand{\vop}{\mathrm{vop}}
\newcommand{\Con}{\mathrm{Con}}
\newcommand{\SSEG}{\mathbb{S}\mathrm{eg}}
\newcommand{\CCAT}{\mathbb{C}\mathrm{at}}
\newcommand{\SSPAN}{\mathbb{S}\mathrm{pan}}
\newcommand{\hop}{\mathrm{hop}}
\newcommand{\prof}{\mathrm{Prof}}
\begin{document}

\begin{abstract}
	We generalize proarrow equipments from strict category theory to the $\infty$-cate\-go\-rical setting, 
	introducing the concept of $\infty$-equipments. These are specific  
	 double $\infty$-categories that support an internal higher category theory. This paper explores several examples of $\infty$-equip\-ments, including the prototypical example of the $\infty$-equipment 
	of $\infty$-categories and the more general $\infty$-equipments of internal $\infty$-categories.
	The ultimate objective of this article is to study the basic concepts of category theory within an arbitrary $\infty$-equipment, such as colimits and Kan extensions.
\end{abstract}

\maketitle

\setcounter{tocdepth}{1}
\tableofcontents

\vspace{16mm}

\section{Introduction} 

\textit{Formal category theory} is concerned with distilling the fundamental concepts that enables one to define a category theory internal to an ambient structure such as a 2-category.
The archetypical example is given by the ambient 2-category of (small) categories. Ideally, one would like to obtain an umbrella framework  
that encompasses internal and indexed category theory, enriched category theory, and relative category theory among others. 
In this article, we will extend the existing methods of strict formal category theory to the higher categorical context. 

A similar extension was already made by Riehl and Verity in their successful formal treatise of the foundations 
of $\infty$-category theory. Nowadays, one can find a complete introduction to $\infty$-category theory from this perspective \cite{RiehlVerity}. 
The approach that will be presented 
in this article is closely related to the one by Riehl--Verity. There is also a synthetic approaches outside traditional foundations that build on a homotopical interpretation of Martin-L\"of's dependent type theory, called \textit{homotopy type theory} \cite{hottbook}. 
In \cite{RiehlShulman}, Riehl and Shulman introduce an extension of this type theory called \textit{simplicial homotopy type theory} which yields a formal language to reason about $\infty$-categories.
A nice exposition of this formalism can be found in \cite[Section 4]{RiehlUndergrads}. 

The motivation of this article commences with the observation that there is multitude of variants on $\infty$-categories that are relevant to homotopy theory. 
For instance, in equivariant homotopy theory, the formalism of $G$-$\infty$-categories, i.e.\ $\infty$-categories with a \textit{genuine} $G$-action, 
has proven to be effective, see e.g.\ \cite{EquivThom} \cite{Hilman}. The development of 
$G$-equivariant $\infty$-category theory was initiated by Hill, and motivated by the groundbreaking solution of the Kervaire invariant one problem by Hill--Hopkins--Ravenel \cite{HHR}; see the section on \textit{Hill's program} in \cite{Expose0}.
The $G$-$\infty$-categories are examples of \textit{internal $\infty$-categories}, namely, these are the ones that are internal 
to the (presheaf) $\infty$-topos of $G$-spaces. Other applications of 
internal category theory can be found in motivic homotopy theory \cite{BachmannElmantoHeller} 
and condensed mathematics \cite{BarwickGlasmanHaine} \cite{Wolf}. 
A different variant are the \textit{enriched  $\infty$-categories}, 
and important examples of these include $(\infty,n)$-categories, Gray $\infty$-categories \cite{HeineCatHom}, and spectral and dg-categories \cite[Example 5.11]{HaugsengRect}.
\textit{Relative $\infty$-categories} were studied by Lurie in \cite[Section 4.3.1]{HTT}.

All the aforementioned flavors of $\infty$-categories admit a tailored \textit{category theory}. For instance, these variants support suitable notions of:
\begin{enumerate}
	\item fully faithful functors,
	\item adjunctions,
	\item colimits and limits,
	\item point-wise Kan extensions,
	\item colimit and limit completions,
\end{enumerate}
that are generally different from the corresponding notions of their underlying $\infty$-cate\-gories. The theory of $\infty$-categories internal to presheaf $\infty$-toposes 
was first studied by Barwick--Dotto--Glasman--Nardin--Shah \cite{ExposeI}, Shah \cite{Shah}, and Nardin \cite{Nardin}, under the name of \textit{parametrized 
$\infty$-category theory}.
More generally, category theory internal to arbitrary $\infty$-toposes was studied  by Martini \cite{Martini}, and Martini--Wolf \cite{MartiniWolf}. 
Enriched $\infty$-category theory has been studied by Hinich \cite{Hinich}, and Heine \cite{Heine}. A theory of relative colimits and Kan extensions has been developed by Lurie \cite{HTT}.

This paper provides an approach to systematically studying all these variants of $\infty$-category at once. 
We will build on the work of Wood \cite{Wood}, Verity \cite{Verity}, and Shulman \cite{ShulmanFramedBicats} on strict proarrow equipments, 
to introduce an $\infty$-categorical adaptation of their work: the formalism of \textit{$\infty$-equipments}, certain two-dimensional $\infty$-categorical structures
that support an internal category theory. In this article,  will develop the notions (1) to (4)  internal 
to these structures in \cref{section.formal-category-theory}. There are suitable ambient $\infty$-equipments for enriched $\infty$-category theory \cite{HaugsengEnriched}, internal $\infty$-category theory (\cref{section.equipment-internal-cats}), 
and relative $\infty$-category theory. 

We will now give a more detailed overview of the material covered in this paper.

\subsection{Suitable ambient structures} As $\infty$-categories naturally assemble into an $(\infty,2)$-category, 
$(\infty,2$)-categories form a natural candidate in which to develop formal category theory. For instance, 
one may speak about adjunctions in an $(\infty,2)$-category \cite{RiehlVerityAdj}. 
However, an important insight in the development of formal category theory is that plain $(\infty,2)$-categories are often insufficient to define the correct categorical notions internally \cite[Section 0]{Wood}.
We will illustrate this with some examples.

One could attempt to define a notion of fully faithful arrows in an $(\infty,2)$-category $\C$ as follows. Namely, we could say that an arrow $f : x \rightarrow y$ in $\C$  
is fully faithful if and only if for every object $z \in \C$, the induced post-composition functor 
$
f_!: \C(z,x) \rightarrow \C(z,y)
$
is fully faithful on mapping $\infty$-categories. This recovers the notion of fully faithful functors between $\infty$-categories if $\C$ is the ambient $(\infty,2)$-category of $\infty$-categories, 
but one can check that this definition does not recover the correct notion of 
fully faithful functors when $\C$ is an arbitrary ambient $(\infty,2)$-category of enriched $\infty$-categories in general.

One could also attempt to define a notion of Kan extensions internal to $\C$. Namely, one could say that the datum of a 2-cell 
\[
    \begin{tikzcd}
        i \arrow[dr,"f"name=f]\arrow[d,"w"'] \\
        |[alias=t]|j\arrow[r,"g"'] \arrow[r] & x
        \arrow[from=f,to=t,Rightarrow, shorten <= 6pt]
    \end{tikzcd}
\]
in $\C$ exhibits $g$ as the \textit{left extension} of $f$ if and only if the 2-cell witnesses the copresheaf 
$$
\map_{\C(i,x)}(f, w^*(-)) : \C(j,x) \rightarrow \S
$$
to be corepresentable by $g$.\footnote{Here, $\S$ denotes the $\infty$-category of \textit{spaces} or \textit{$\infty$-groupoids}.} But this definition fails to capture the notion of \textit{point-wise} left Kan extensions already if 
$\C$ is the ambient 2-category of categories; see \cite{CampionCounterEx} for some counter-examples. The bare notion of these 
extensions do not come with a method to compute these as certain colimits that would be desirable in practice.
In this article, we have 
chosen to reserve the name \textit{Kan extension} for point-wise extensions; cf.\ \cref{rem.wke}.

To remedy the insufficiency of $(\infty,2)$-categories to formalize the adequate categorical notions, one would like to endow the ambient $(\infty,2)$-categories with more structure. 
In the strict context, a first successful approach to formal category theory is due to the Australian school of category theorists. In \cite{StreetWalters}, Street and Walters axiomatize 
so-called \textit{Yoneda structures} on 2-categories which allow for good notions of internal category theory.
The name of these structures already reflects what obstructs the development of formal category theory in a plain ambient 2-category: a 2-category 
does not always come with apparent notions of presheaves and representable presheaves, which are crucial to developing an internal category theory.
The work of Street and Walters inspired further approaches to formal category theory. In particular, the approach of Wood via \textit{proarrow equipments}
is of importance to us \cite{Wood}. We have chosen to generalize this strategy to the $\infty$-categorical setting.

\subsection{$\infty$-Equipments} 
To make this generalization, we will take on Verity's \textit{double categorical} perspective 
on proarrow equipments \cite{Verity}.\footnote{The relation to Wood's original formulation can be found in \cite[Definition 1.2.4]{Verity} and \cite[Appendix C]{ShulmanFramedBicats}.}
In the higher context, the formalism of \textit{double $\infty$-categories} was introduced by Haugseng \cite{HaugsengPhD}.
A double $\infty$-category is a 2-dimensional $\infty$-categorical object containing objects, 
 two types of arrows (1-cells) between objects: \textit{vertical arrows} and \textit{horizontal arrows}, 
and a notion of 2-cells. A 2-cell may be pictured as a square
\[
	\begin{tikzcd}
		a \arrow[r, ""'name=f]\arrow[d] & b \arrow[d] \\
		c \arrow[r, ""name=t] & d
		\arrow[from=f, to=t, Rightarrow]
	\end{tikzcd}
\]
where the direction of the arrows match their vertical/horizontal type. Informally, a double $\infty$-category 
may be viewed as the data of these spaces of objects, 1- and 2-cells, together with compatible and coherent composition laws for these cells. Double $\infty$-categories were also studied by Moser \cite{Moser}. 
We will recall the basics on double $\infty$-categories and $(\infty,2)$-categories in \cref{section.prelims}. In particular, we will show that one can extract 
two $(\infty,2)$-categories from a double $\infty$-category: its \textit{vertical fragment}, whose arrows are given by the vertical arrows, and its \textit{horizontal fragment}, with arrows 
given by the horizontal arrows.\footnote{These are really $(\infty,2)$-categories up to \textit{univalence}; see \cref{ssection.dbl-cats}.}

In \cref{section.equipment-infty-cats}, we will recall the construction by Ayala--Francis \cite{AyalaFrancis} of an archetypical example of a double $\infty$-category: the double $\infty$-category 
$$
\infty\CCAT 
$$
of $\infty$-categories. It is described as follows: 
\begin{itemize}
	\item its objects are $\infty$-categories,
	\item its vertical arrows are functors,
	\item its horizontal arrows are  \textit{profunctors}, or families of presheaves: a horizontal arrow $\C \rightarrow \D$ between $\infty$-categories corresponds precisely to a functor
	$\D^\op \times \C \rightarrow \S$,
	\item its 2-cells are given as follows: a 2-cell on the left below \[
\begin{tikzcd}
	\mathscr{A} \arrow[r ,"F"name=f] \arrow[d,"f"'] & \mathscr{B} \arrow[d, "g"] \\
	\C \arrow[r,"G"to=T] & \D
	\arrow[from=f,to=t,Rightarrow, shorten <= 6pt, shorten >= 6pt]
\end{tikzcd}\quad
\Leftrightarrow\quad
\begin{tikzcd}[column sep = large]
	\mathscr{B}^\op \times \mathscr{A} \arrow[dr,"F"name=f]\arrow[d, "g^\op \times f"']  \\
	|[alias=t]|\D^\op \times \C \arrow[r, "G"'] & \S
	\arrow[from=f,to=t,Rightarrow, shorten <= 6pt]
\end{tikzcd}
\]
corresponds precisely to the datum of a natural transformation as displayed on the right.
\end{itemize}
The vertical fragment of this double $\infty$-category is equivalent to Lurie's construction of $(\infty,2)$-category of $\infty$-categories \cite{LurieInfty2}, as shown in \cref{prop.comparison-models-infty2-infty-cats}.
An important feature that $\infty\CCAT$ enjoys is that every vertical arrow $f : \C \to \D$ both corepresents and represents a horizontal arrow. Namely, we may associate the profunctor 
$f_\circledast : \D^\op \times \C \to \S : (d,c) \mapsto \map_\D(d,f(c))$ and $f^\circledast : \C^\op \times \D \to \S : (c,d) \mapsto \map_\D(f(c),d)$.  This (co)representation of horizontal arrows 
can be phrased using the purely double categorical language of \textit{companions} and \textit{conjoints} \cite{GrandisPare} \cite{Comp}. We will recall this notion in \cref{ssection.compconj}.
In his language, $f_\circledast$ is called the companion of $f$, and $f^\circledast$ is called the conjoint of $f$. 

In \cref{section.equipments}, we will define $\infty$-equipments to be the double $\infty$-categories that behave like $\infty\CCAT$ in this regard. We will ask for an $\infty$-equipment to have companions and conjoints for all vertical arrows. 
Informally, we may view an $\infty$-equipment as extra structure on its vertical $(\infty,2)$-category that endows this fragment with a notion of \textit{proarrows}, abstract profunctors, that can be (co)represented 
by the arrows in its vertical fragment via the notion of companions and conjoints. We will show that any $\infty$-equipment admits 
two types of abstract Yoneda embeddings: the companion and conjoint embedding. These are locally fully faithful functors between the vertical and horizontal fragments 
that carry vertical arrows to their companion and conjoint respectively.

The above example of $\infty\CCAT$ may be generalized. In \cref{section.equipment-internal-cats}, we provide a construction of an ambient double $\infty$-category $\CCAT(\E)$ of $\infty$-categories internal to a suitable $\infty$-category $\E$. This 
is also an $\infty$-equipment. For example, when taking $\E$ to be the $\infty$-category of $G$-spaces, we will obtain an ambient equipment for the aforementioned $G$-$\infty$-categories.
The construction of $\CCAT(\E)$ is quite technical, and readers may safely skip the details in this section. We will use $\infty\CCAT$ as the main example to illustrate concepts throughout the paper.

\subsection{Formal category theory} We will demonstrate in \cref{section.formal-category-theory} 
that one can develop a satisfying category theory internal to an $\infty$-equipment.
The theory that we will cover here is inspired and often analogous to that of strict proarrow equipments. Albeit in a different set-up, 
some of the theory dates back to Street and Walters \cite{StreetWalters}, Guitart \cite{Guitart}, and Wood \cite{Wood}. In the context 
of strict double categorical equipments, parts of the formal category theory introduced here can be found in the work of Shulman \cite{ShulmanEnInCats}. 
The formal category theory inside strict equipments plays an important role in the synthetic approach to $\infty$-category theory by Riehl--Verity \cite{RiehlVerityFormal} \cite[Chapter 13]{RiehlVerity} as well,
which 
has also been a source of inspiration.

We will cover the following aspects of formal category theory inside a general $\infty$-equipment $\P$ in this paper:
\begin{itemize}
	\item \cref{ssection.injections}: \textit{injections}, i.e.\ abstract fully faithful arrows in $\P$,
	\item \cref{ssection.fct-adjunctions}: adjunctions in the vertical fragment of $\P$ and additional properties,
	\item \cref{ssection.weighted-colims}: \textit{weighted} colimits and limits,
	\item \cref{ssection.pke}: point-wise Kan extensions,
	\item \cref{ssection.exact-squares}: Beck--Chevalley conditions for point-wise Kan extensions,
	\item \cref{ssection.initial-final}: abstract initial and final vertical arrows.
\end{itemize}
When defining and developing each of these aspects, we will provide additional examples in the archetypical case that $\P = \infty\CCAT$, and show that it recovers the usual notions of $\infty$-category theory.
In the sequel to this paper \cite{EquipII} we will demonstrate that for $\P = \CCAT(\E)$, where $\E$ is an $\infty$-topos, the formal category theory in $\P$ recovers the theory 
that was developed by Shah \cite{Shah}, Martini \cite{Martini}, and Martini--Wolf \cite{MartiniWolf}.

\subsection*{Conventions} We will use the language of $\infty$-categories as developed by Joyal and Lurie. Moreover, we  use the following notational conventions:
\begin{itemize}
	\item The  $\infty$-categories of spaces (i.e.\ $\infty$-groupoids) and $\infty$-categories are denoted by $\S$ and $\infty\Cat$ respectively. 
	\item We will usually be imprecise about sizes, apart from the cases where it is important to keep track. On these occasions, we will write 
	$\widehat{\S}$ and $\widehat{\infty\Cat}$ for the (very large) $\infty$-categories of large spaces and large $\infty$-categories.
	\item If $\C$ is an $\infty$-category, we will write $\PSh(\C) := \fun(\C^\op, \S)$ for the $\infty$-cate\-go\-ry of presheaves on $\C$.
	\item We will use uppercase notation for $(\infty,2)$-categorical upgrades of particular $\infty$-categories. Double $\infty$-categorical variants will be decorated with blackboard bolds.
	For instance, in this article, we will encounter both the $(\infty,2)$-category $\infty\CAT$ and the double $\infty$-category $\infty\CCAT$ of $\infty$-categories.
\end{itemize}

\subsection*{Acknowledgements}

The theory in this article formed a part of my Ph.D. thesis, and  I would like to express my gratitude towards my advisor, Lennart Meier, for the helpful comments and discussions on the contents of this paper.
During a visit to Johns Hopkins University in the spring of 2023, I had many fruitful conversations with Emily Riehl 
regarding the research presented in this article. I would like to thank her once again for these discussions and the new insights they provided.
The author was funded by the Dutch Research Council (NWO) through the grant ``The interplay of orientations and symmetry'', grant no. OCENW.KLEIN.364.

\section{Two-dimensional \texorpdfstring{$\infty$}{∞}-categories}\label{section.prelims}

We will start by covering the preliminaries on two-dimensional $\infty$-categories. 
\subsection{Segal objects}

We recall the notion of \textit{Segal objects} \cite[Definition 1.1.1]{LurieInfty2} in general $\infty$-categories. 

\begin{definition}
	Let $\E$ be an $\infty$-category with pullbacks. 
	A \textit{Segal object $X$ in $\E$} is a simplicial object $X : \Delta^\op \rightarrow \E$
	such that the map
	$$ 
	X([n]) \rightarrow X(\{0 \leq 1\}) \times_{X(\{1\})} \dotsb \times_{X(\{n-1\})} X(\{n-1\leq n\})
	$$ 
	is an equivalence for all $n \geq 0$. We will write $$\Seg(\E) \subset \fun(\Delta^\op, \E)$$ for the full subcategory spanned by the Segal objects. For brevity, we will write 
	$\Seg := \Seg(\S)$.
\end{definition}

\begin{definition}\label{def.comp-cat-obj}
	Let $X$ be a Segal object in an $\infty$-category $\E$ with pullbacks. Then we define $X_\mathrm{eq} \in \E$ by the pullback square
	\[
	\begin{tikzcd}
		X_\mathrm{eq} \arrow[r]\arrow[d] & X([3]) \arrow[d,] \\
		X([0])\times X([0]) \arrow[r] & X(\{0\leq 2\}) \times X(\{1 \leq 3\}).
	\end{tikzcd}
	\]
	We say that $X$ is \textit{complete} if the canonical map $X([0]) \rightarrow X_\mathrm{eq}$ is an equivalence.  
	In this case, $X$ is also called a \textit{category internal to $\E$}. We will write $$\Cat(\E) \subset \Seg(\E)$$ for the full subcategory spanned by the complete Segal objects. 
\end{definition}

\begin{example}\label{ex.segal-spaces}
	Rezk's Segal spaces \cite{RezkSeg} are precisely the Segal objects in the $\infty$-category $\S$ of spaces. 
	Let $J$ be the simplicial set defined by the pushout square 
\[
	\begin{tikzcd}[column sep = large]
	{[1]^{\sqcup 2}} \arrow[r, "\{0\leq 2\} \sqcup \{1 \leq 3\}"]\arrow[d] & {[3]} \arrow[d] \\
	{[0]^{\sqcup 2}} \arrow[r] & J
	\end{tikzcd}
\] 
of simplicial spaces. Here, we implicitly view every $[n] \in \Delta$ as an object of $\PSh(\Delta)$ via the Yoneda embedding. A Segal space $X$ is complete in the sense of \cref{def.comp-cat-obj} if and only if the map 
$$
\map_{\PSh(\Delta)}([0], X) \rightarrow \map_{\PSh(\Delta)}(J,X)
$$
induced by $J \rightarrow [0]$ is an equivalence. This coincides with Rezk's notion of completeness for Segal spaces on account of \cite[Proposition 11.1]{RezkSeg}.

It was shown by Joyal and Tierney \cite{JoyalTierney} that the inclusion $\Delta \to \infty\Cat$ is dense. 
In other words, the restricted Yoneda embedding
$$
	\infty\Cat \rightarrow \PSh(\Delta) : \C \mapsto ([n] \mapsto \map_{\infty\Cat}([n], \C))
	$$
	is fully faithful. The essential image of this functor is precisely given by $\Cat(\S)$.
\end{example}

There are $n$-dimensional variants on Segal spaces \cite{BarwickPhD} \cite{RezkTheta}. We will discuss 
Rezk's model for $n=2$. To this end, we make use of the following shapes:

\begin{definition}
	Let $n$ be a non-negative integer, and let $\overline{m} = (m_1, \dotsc, m_{n})$ be a tuple of non-negative integers. 
	Then we write $[n;\overline{m}]$ for the 2-category with:
	\begin{itemize}
		\item objects given by the integers $0, \dotsc, n$,
		\item hom-categories given by  \[
		[n;\overline{m}](i,j) := \begin{cases}
			[m_{i+1}] \times \dotsb \times [m_{j}] & \text{if $i \leq j$}, \\
			\emptyset & \text{otherwise},
		\end{cases}
		\]
		for $0\leq i,j \leq n$,
	\end{itemize}
	with the obvious composition.
	The 2-category $[n;\overline{m}]$ is called a \textit{globular sum}, and can be pictured as 
	\[
		{[n;\overline{m}]} := \begin{tikzcd}[column sep = large]
			0 \arrow[r, "0"name=a0, looseness = 2, bend left = 70pt ]\arrow[r, "1"'name=a1, bend left = 45pt]\arrow[r, "m_1-1"name=am1, bend right = 45pt]\arrow[r, "m_1"'name=am, looseness = 2, bend right = 70pt] & 1	
			\arrow[from=a0,to=a1, Rightarrow, shorten <= 3pt, shorten >= 3pt]
			\arrow[from=a1,to=am1, phantom, "\vdots", yshift = 3pt]
			\arrow[from=am1,to=am, Rightarrow, shorten <= 3pt, shorten >= 3pt] \arrow[r, phantom, "\cdots"] &  
			n-1\arrow[r, "0"name=b0, looseness = 2, bend left = 70pt ]\arrow[r, "1"'name=b1, bend left = 45pt]\arrow[r, "m_n-1"name=bm1, bend right = 45pt]\arrow[r, "m_n"'name=bm, looseness = 2, bend right = 70pt] & n.	
			\arrow[from=b0,to=b1, Rightarrow, shorten <= 3pt, shorten >= 3pt]
			\arrow[from=b1,to=bm1, phantom, "\vdots", yshift = 3pt]
			\arrow[from=bm1,to=bm, Rightarrow, shorten <= 3pt, shorten >= 3pt]
		\end{tikzcd}
	\] We write $\Theta_2$ for the full subcategory of the category 
	of strict 2-categories spanned by these globular sums. 
	Note that there is a canonical inclusion $\Delta \to \Theta_2$.
\end{definition}

\begin{definition}
	A presheaf $X \in \PSh(\Theta_2)$ is called a \textit{$\Theta_2$-Segal space} if the following conditions are met: 
	\begin{enumerate}
		\item for any tuple $\overline{m} = (m_1, \dotsc, m_{n})$, the canonical map 
		$$
		X([n;\overline{m}]) \to X([1;m_1]) \times_{X([0])} \dotsb \times_{X([0])} X([1;m_n]),
		$$
		\item for any $n \geq 0$, the canonical map 
		$$
		X([1;n]) \to X([1;1]) \times_{X([1])} \dotsb \times_{X([1])} X([1;1])
		$$
		is an equivalence.
	\end{enumerate}
	We will write 
	$$
	\Theta_2\Seg \subset \PSh(\Theta_2)
	$$
	for the full subcategory spanned by the $\Theta_2$-Segal spaces.
\end{definition}

\begin{construction}
	The inclusion $\Delta \subset \Theta_2$ admits a left adjoint that carries a globular sum $[n;m]$ to $[n]$. 
	This induces an adjunction $\PSh(\Delta) \rightleftarrows \PSh(\Theta_2)$. In turn, one may readily verify that this restricts to an adjunction
	$$
		i : \Seg \rightleftarrows \Theta_2\Seg : \tau_1.
	$$ 
	The right adjoint is called the \textit{1-core} functor, and it forgets the 2-cells.
\end{construction}

\begin{definition}
	As customary, if $x$ and $y$ are objects of a $\Theta_2$-Segal space $X$, then we define the Segal space $X(x,y)$ 
	of maps from $x$ to $y$ by the pullback square
	\[
		\begin{tikzcd}
			X(x,y)\arrow[d] \arrow[r] & X([1; -]) \arrow[d] \\
			\{(x,y)\} \arrow[r] & X([0]) \times X([0]).
		\end{tikzcd}
	\]
\end{definition}

\begin{remark}
	Suppose that $f : x \to y$ is an arrow of a $\Theta_2$-Segal space $X$. Then $f$ induces a post-composition map 
	$f \circ (-): X(z,x) \to X(z,y)$ for every $z \in X$ that is induced by the following commutative square 
	of Segal spaces
	\[
		\begin{tikzcd}
			X([2;-]) \arrow[r, "{[\{0\leq 2\}; -]^*}"]\arrow[d, "{\{0\}^* \times [\{1\leq 2\}; -]^*}"'] & X([1;-]) \arrow[d] \\
			X([0]) \times X([1;-])\arrow[r] & X([0]) \times X([0])
		\end{tikzcd}
	\]
	after taking fibers above $(z,f)$.
	Similarly, we obtain a pre-composition map $(-) \circ f : X(y,z) \to X(x,z)$,
\end{remark}

\begin{definition}
	Let $X$ be a $\Theta_2$-Segal space. Then $X$ is called \textit{locally complete} if $X(x,y)$ is an $\infty$-category for 
	every $x,y \in X$. If, additionally, $\tau_1 X$ is an $\infty$-category, then we say that $X$ is an \textit{$(\infty,2)$-category}.
	We write
	$$(\infty,2)\Cat \subset \Theta_2\Seg$$ 
	for the full subcategory spanned by the $(\infty,2)$-categories.
\end{definition}

\begin{remark}
	The adjunction $\Seg \rightleftarrows \Theta_2\Seg : \tau_1$ restricts to an adjunction 
	$\infty\Cat \rightleftarrows (\infty,2)\Cat: \tau_1$.
\end{remark}

\begin{construction}
	There are involutions $\opo : \Theta_2 \to \Theta_2$ and $\opt : \Theta_2 \to \Theta_2$ that reverse 
	the directions of 1-cells and 2-cells, respectively. These induce involutions 
	$$
	(-)^{\opo} := \opo^* : \PSh(\Delta^{\times 2}) \rightarrow \PSh(\Delta^{\times 2}), \quad (-)^\opt := \opt^* : \PSh(\Delta^{\times 2}) \rightarrow \PSh(\Delta^{\times 2}),
	$$
	called the \textit{1-opposite} and \textit{2-opposite} respectively, and both restrict to involutions on 
	$\Theta_2\Seg$ and $(\infty,2)\Cat$.
\end{construction}

\subsection{Double $\infty$-categories}\label{ssection.dbl-cats} We are now ready to discuss the categorical structures that 
play a key role in this article.

\begin{definition}	
	A  \textit{double $\infty$-category} is a Segal object in $\infty\Cat$. We write $$\DbliCat := \Seg(\infty\Cat)$$ for the 
	ambient $\infty$-category of double $\infty$-categories.
\end{definition} 

It is often convenient and illuminating to view double $\infty$-categories as particular \textit{double Segal spaces}:
\begin{definition}
	A \textit{double Segal space} is a  Segal object in $\Seg$.
		We will write
		$$\DblSeg := \Seg(\Seg)$$ for the $\infty$-category of double Segal spaces.
\end{definition}

\begin{remark}\label{con.dbl-infty-vs-dbl-segal}
	We will view $\DblSeg \subset \fun(\Delta^\op, \Seg) \subset \fun(\Delta^\op, \PSh(\Delta))$ as a full subcategory of  $\PSh(\Delta^{\times 2}).$ 
	The Yoneda embedding $\Delta^{\times 2} \to \PSh(\Delta^{\times 2}) : ([n], [m]) \mapsto [n,m]$ factors through $\DblSeg$.
	As explained in \cite[Remark 3.35]{Comp}, we may view double $\infty$-categories as double Segal spaces. There is an inclusion
	$$
	i :\DbliCat \rightarrow \DblSeg
	$$
	so that a double $\infty$-category $\P$ to the double Segal space $i(\P)$ described by
	$$
	i(\P)_{n,m} = \map_{\DblSeg}([n,m], i(\P)) \simeq \map_{\infty\Cat}([m], \P_n').
	$$
	The essential image of the functor $i$ consists of those 
	double Segal spaces $\P$ such that $\P_{n,-}$ is a complete Segal space for all $n$. We will leave the inclusion 
	$i$ implicit in the rest of this article. We note that $[n,m]$ is always a double $\infty$-category.
\end{remark}

	The double $\infty$-categories $[0,0]$, $[1,0]$, $[0,1]$ and $[1,1]$ are called the \textit{free-living object/0-cell}, the \textit{free-living horizontal} and \textit{vertical arrow/1-cell}, and the \textit{free-living 2-cell}, 
	respectively.

\begin{remark}
Unwinding the definitions, a double Segal space $\P$ contains the following data: 
\begin{itemize} 
\item a space $\P_{0,0}$ of objects,
\item a space $\P_{0,1}$ of {vertical arrows},
\item a space $\P_{1,0}$ of {horizontal arrows},
\item a space $\P_{1,1}$ of 
2-cells,
\item a space $\P_{n,m}$ of $(n \times m)$-grids of 2-cells for $n,m \geq 1$.
\end{itemize}
As customary, we will picture 2-cells as squares
\[
	\begin{tikzcd}
		a \arrow[r, ""'name=f]\arrow[d] & b \arrow[d] \\
		c \arrow[r, ""name=t] & d
		\arrow[from=f, to=t, Rightarrow]
	\end{tikzcd}
\]
where the horizontal/vertical directed arrows are horizontal/vertical arrows of $\P$. 
\end{remark}

\begin{example}\label{example.dbl-cat-morita}
	Suppose that $\C$ is a monoidal $\infty$-category so that $\C$ admits all geometric realizations, 
	and the tensor product $\otimes$ of $\C$ preserves geometric realizations in each variable.
 	Lurie constructs in \cite[Subsection 4.4.3]{HA} an associated double $\infty$-category
	$$
	\mathbb{M}\mathrm{or}(\C),
	$$
	called the \textit{Morita double $\infty$-category}; see \cite[Remark 4.4.3.11]{HA}. Its objects are given by algebras in $\C$, the vertical arrows 
	are algebra morphisms, and a horizontal arrow $M:A \rightarrow B$ between algebras is given by an $(A,B)$-bimodule $M$. 
	A 2-cell 
	\[
	\begin{tikzcd}
		A \arrow[r ,"M"name=f] \arrow[d,"f"'] & B \arrow[d, "g"] \\
		C \arrow[r,"N"name=t] & D
		\arrow[from=f, to=t, Rightarrow, shorten <= 6pt]
	\end{tikzcd}
	\]
	in $\mathbb{M}\mathrm{or}(\C)$  
	corresponds to a map of $(C,D)$-bimodules $C \otimes_A M \otimes_B D \rightarrow N$.
\end{example}

\begin{example}\label{example.dbl-cat-spans}
	Let $\C$ be an $\infty$-category with finite limits. In \cite{HaugsengSpans}, Haug\-seng constructs the double $\infty$-category 
	$$
	\SSPAN(\C)
	$$
	of spans in $\C$. The objects of this double $\infty$-category are given by the objects of $\C$, and the vertical arrows are the arrows of $\C$. 
	A horizontal arrow $a \rightarrow b$ in this double $\infty$-category corresponds to a span
	$$
	a \xleftarrow{p} x \xrightarrow{q} b
	$$
	in $\C$, i.e.\ a map $(p,q) : x \rightarrow a\times b$.
	The 2-cells in $\SSPAN(\C)$ pictured on the left below
	\[
	\begin{tikzcd}
		a \arrow[r, "{(p,q)}"name=f] \arrow[d,"f"'] & b \arrow[d,"g"] \\
		c \arrow[r,"{(p',q')}"name=t] & d
		\arrow[from=f, to=t, Rightarrow, shorten <= 6pt]
	\end{tikzcd}
	\quad 
	\Leftrightarrow
	\quad
	\begin{tikzcd}
		a\arrow[d,"f"'] & \arrow[l,"p"']  x\arrow[d] \arrow[r,"q"] & b\arrow[d,"g"] \\
		c & \arrow[l,"p'"']  y \arrow[r,"q'"] & d
	\end{tikzcd}
	\]
	correspond precisely to commutative diagrams in $\C$ as displayed on the right.
\end{example}

\begin{example}\label{ex.squares}
	Let $\C$ be an $(\infty,2)$-category. Then there is a double $\infty$-category
	$$
	\Sq(\C)
	$$
	of squares in $\C$. The objects of this double $\infty$-category are given by the objects of $\C$, 
	and the vertical and horizontal arrows are given by the arrows of $\C$.
	The 2-cells in $\Sq(\C)$ pictured on the left below
	\[
	\begin{tikzcd}
		a \arrow[r, "h"name=f] \arrow[d,"f"'] & b \arrow[d,"g"] \\
		c \arrow[r,"k"name=t] & d
		\arrow[from=f, to=t, Rightarrow, shorten <= 6pt]
	\end{tikzcd}
	\quad 
	\Leftrightarrow
	\quad
	\begin{tikzcd}
		a \arrow[r, "h"] \arrow[d,"f"'] & |[alias=f]| b \arrow[d,"g"] \\
		|[alias=t]| c \arrow[r,"k"] & d
		\arrow[from=f, to=t, Rightarrow]
	\end{tikzcd}
	\]
	correspond precisely to the lax commutative diagrams in $\C$ as displayed on the right.
	A construction of $\Sq(\C)$ can be found in \cite[Section 3.6]{Comp}. Precisely, $\Sq(\C)$ is defined by
	$$
		\map_{\DbliCat}([n,m], \Sq(\C)) \simeq \map_{(\infty,2)\Cat}([n] \otimes [m], \C),
	$$
	where $[n] \otimes [m]$ is the (nerve) of the 2-category given by the oplax Gray tensor product
	of $[n]$ and $[m]$, as defined in \cite{Gray}.
\end{example}

\begin{construction}\label{cons.dualities} 
	We recall the following duality operations for double Segal spaces.
	The involution $\op : \Delta \rightarrow \Delta$ gives rise to two functors 
	$$
	\hop : \Delta \times \Delta \xrightarrow{\op \times \id} \Delta \times \Delta, \quad \vop : \Delta \times \Delta \xrightarrow{\id \times \op} \Delta \times \Delta,
	$$
	so that we obtain involutions
	$$
	(-)^\hop := \hop^* : \PSh(\Delta^{\times 2}) \rightarrow \PSh(\Delta^{\times 2}), \quad (-)^\vop := \vop^* : \PSh(\Delta^{\times 2}) \rightarrow \PSh(\Delta^{\times 2}),
	$$
	called the \textit{horizontal opposite} and \textit{vertical opposite} functors respectively.
	One can readily verify that these functors restrict to endofunctors on the full subcategories $\DblSeg$ and $\DbliCat$.
	There is yet one other useful involution. We will write 
	$$
	(-)^\tp = \tp^* : \PSh(\Delta^{\times 2}) \rightarrow \PSh(\Delta^{\times 2})
	$$ 
	for the functor given by restriction along $\tp : \Delta^{\times 2} \rightarrow \Delta^{\times 2} : ([n],[m]) \mapsto ([m],[n])$. 
	This is called the \textit{transpose} functor. One readily verifies that $(-)^\tp$ restricts to an endofunctor on $\DblSeg$. However, the transpose 
	of a double $\infty$-category is not necessarily a double $\infty$-category again.
\end{construction}

\begin{notation} 
	We recall the following notation from \cite[Section 3.3]{Comp}. We will write 
$$(-)_h  : \Theta_2\Seg \to \DblSeg$$
	for the \textit{horizontal inclusion} functor that is induced by the  bicosimplicial object $\Delta^{\times 2} \to  \Theta_2 : ([n], [m]) \mapsto [n;m,\dotsc, m]$.
	There is also a \textit{vertical inclusion} functor 
defined by the composite 
$$
	(-)_v : \Theta_2\Seg  \xrightarrow{(-)_h} \DblSeg \xrightarrow{(-)^\tp} \DblSeg \xrightarrow{(-)^\hop} \DblSeg.
	$$
	The functors $(-)_h$ and $(-)_v$ admit right adjoints that are denoted by
	$$
	\Hor(-), \Vert(-) : \DblSeg \rightarrow \Theta_2\Seg,
	$$
	respectively. These are called the \textit{horizontal fragment} and \textit{vertical fragment} functors.
\end{notation}

\begin{remark}\label{rem.h-v-inclusions}
	As explained in \cite[Section 3.2]{Comp}, there are pushout squares 
	\[
	\begin{tikzcd}
		\{0,\dotsc, n\}_h \times [m]_v \arrow[r]\arrow[d] & {[n,m]} \arrow[d] \\
		\{0, \dotsc, n\}_h  \arrow[r] & {[n;m,\dotsc,m]_h,}
	\end{tikzcd}
	\quad
	\begin{tikzcd}
		{ [m]_h^\hop} \times \{0,\dotsc, n\}_v \arrow[r]\arrow[d] & {[m,n]^\hop} \arrow[d] \\
		 \{0,\dotsc, n\}_v \arrow[r] & {[n;m,\dotsc,m]_v}
	\end{tikzcd}
	\]
	in $\DblSeg$ that are natural in $[n], [m] \in \Delta$. In particular, the horizontal and vertical inclusions of the free 2-cell 
	may be pictured as 
	\[
		[1;1]_h = \begin{tikzcd}
			0 \arrow[d,equal]\arrow[r, "0"name=f] & 1 \arrow[d,equal] \\
			0 \arrow[r, "1"name=t] & 1
			\arrow[from=f,to=t, Rightarrow, shorten <= 6pt]
		\end{tikzcd}
		\quad \text{and}\quad
		[1;1]_v = \begin{tikzcd}
			0 \arrow[r,equal, ""name=f]\arrow[d, "1"'] & 0 \arrow[d,"0"] \\
			0 \arrow[r, equal, ""name=t] & 1.
			\arrow[from=f,to=t,Rightarrow, shorten <= 6pt]
		\end{tikzcd}
	\]
\end{remark}

\begin{example}\label{ex.hor-vert-sq}
	Let $\C$ be an $(\infty,2)$-category. Then the vertical and horizontal fragments of the double $\infty$-category $\Sq(\C)$ 
	of squares (see \cref{ex.squares})
	are both given by $\C$. Precisely, in \cite[Construction 3.57]{Comp} we constructed a natural cospan 
	of functors $\C_h \to \Sq(\C) \leftarrow \C_v$ that is adjunct to a span of equivalences 
	$$\Hor(\Sq(\C)) \xleftarrow{\simeq} \C \xrightarrow{\simeq} \Vert(\Sq(\C)).$$
\end{example}

\begin{remark}\label{rem.vert-hor-dbl-infty-cat}
	Let $\P$ be a double $\infty$-category.
	One readily verifies that there is a canonical equivalence $\tau_1\Vert(\P) \simeq \P_0$. For every two objects $x,y \in \P$, $\Hor(\P)(x,y)$ is always an $\infty$-category 
	and it fits in a pullback square 
	\[
		\begin{tikzcd}
			\Hor(\P)(x,y) \arrow[r]\arrow[d] & \P_1 \arrow[d] \\
			\{(x,y)\} \arrow[r] & \P_0 \times \P_0.
		\end{tikzcd}
	\]
\end{remark}

It will be important later in the paper to consider double $\infty$-categories with an extra completeness condition.
We recall the following:

\begin{proposition}[{\cite[Proposition 3.39]{Comp}}]
	Let $\P$ be  a double Segal space. Then the following assertions are equivalent:
	\begin{enumerate}
		\item $\Vert(\P)$ is an $(\infty,2)$-category and $\Hor(\P)$ is locally complete,
		\item $\P$ is a double $\infty$-category and the canonical functor $\P_0 \to \P_\mathrm{eq}$ of \cref{def.comp-cat-obj} is fully faithful.
	\end{enumerate}
\end{proposition}

\begin{corollary}\label{cor.loc-comp}
	Let $\P$ be a double $\infty$-category. Then the following assertions are equivalent:
	\begin{enumerate}
		\item $\Vert(\P)$ is an $(\infty,2)$-category,
		\item $\Vert(\P)$ is locally complete,
		\item the canonical functor $\P_0 \to \P_\mathrm{eq}$ is fully faithful.
	\end{enumerate}
\end{corollary}
\begin{proof}
This follows from the above and \cref{rem.vert-hor-dbl-infty-cat}.
\end{proof}

\begin{definition}
	A double $\infty$-category is called \textit{locally complete}
	if it meets the equivalent conditions of \cref{cor.loc-comp}.
\end{definition}

\subsection{Adjunctions} We briefly discuss adjunctions in locally complete $\Theta_2$-Segal spaces, and collect some basic facts about these. 

\begin{definition}\label{def.adjunction}
	Suppose that $u : x \rightarrow y$ and $v: y \rightarrow x$ are arrows in a locally complete $\Theta_2$-Segal space $X$. Then $(u,v)$ is called an \textit{adjoint pair} if there 
	exist 2-cells 
	$$
	\eta : \id_x \rightarrow vu, \quad \epsilon : uv \rightarrow \id_y
	$$
	in $X(x,x)$ and $X(y,y)$ respectively, so that the two triangle identities hold:
	\begin{enumerate}
		\item the composite of the whiskered 2-cells
		$$
		u \xrightarrow{u\eta} uvu \xrightarrow{\epsilon u} u
		$$
		is equivalent to the identity 2-cell $\id_u$ in $X(x,y)$,
		\item the composite of the whiskered 2-cells
		$$
		v \xrightarrow{\eta v} vuv \xrightarrow{v\epsilon} v
		$$
		is equivalent to the identity 2-cell $\id_v$ in $X(y,x)$.
	\end{enumerate}
	In this case, $\eta$ and $\epsilon$ are called the \textit{adjunction unit} and \textit{counit} respectively.
\end{definition}

\begin{proposition}\label{prop.uniqueness-adjoints}
	Suppose that we have an adjunction
	\[
		u : x \rightleftarrows y : v
	\]
	in a locally complete $\Theta_2$-Segal space $X$ with unit $\eta$ and counit $\epsilon$.  For an arrow $v' \in X(y,x)$, the following assertions 
	are equivalent:
	\begin{enumerate}
		\item the pair $(u,v')$ is an adjoint pair,
		\item there exists an equivalence  $\alpha : v \rightarrow v'$ in $X(y,x)$.
	\end{enumerate} 
	Suppose that (1) holds, and write $\eta'$ and counit $\epsilon'$ for the adjunction $(u,v')$. Then $\alpha$ may be 
	constructed in such a way that
	$\eta'$ can be recovered as the composite 
	$$
	\id_x \rightarrow vu \xrightarrow{\alpha u} v'u,
	$$
	and $\epsilon$ can be recovered as the composite 
	$$
	uv \xrightarrow{u\alpha} uv' \xrightarrow{\epsilon'} \id_y.
	$$
	Conversely, if (2) holds, then $\eta'$ and $\epsilon'$ can be constructed so that this is true.
\end{proposition}
\begin{proof}
	This proof is completely similar as the proof for 2-categories; see \cite[Proposition 2.1.10]{RiehlVerity} for instance.  
\end{proof}

We will show that adjunctions can be lifted along \textit{locally fully faithful functors}:

\begin{definition}
	A map $f : X \rightarrow Y$ between locally complete $\Theta_2$-Segal spaces is called \textit{locally fully faithful} when the
	induced functor $f_{x,y} : X(x,y) \rightarrow Y(f(x), f(y))$ between $\infty$-categories is fully faithful for all $x,y \in X$.
\end{definition}

\begin{proposition}\label{prop.lff-adjoints}
	Suppose that $f : X \rightarrow Y$ is a locally fully faithful functor between locally complete $\Theta_2$-Segal spaces. 
	Let $u : x\rightarrow y$ and $v : y\rightarrow x$ be arrows in $X$. Then $(u,v)$ is an adjoint pair in $X$ 
	if and only if $(f(u), f(v))$ is an adjoint pair in $Y$.
\end{proposition}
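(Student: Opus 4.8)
The plan is to treat the two implications asymmetrically: the forward direction is formal and needs no hypothesis on $F$, while the backward direction is where local full faithfulness does all the work. Throughout I would use the pasting-shape formalism of \ref{rem.pasting-shapes} together with the geometric characterization of local full faithfulness in \ref{prop.lff-characterizations}.

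For the ``only if'' direction, I would simply observe that $F$, being a morphism of 2-fold Segal spaces, induces maps $\X(I) \rightarrow \Y(I)$ on all pasting shapes $I$ that are natural in $I$ and hence compatible with every pasting operation. Thus if $\eta$ and $\epsilon$ are a unit and counit exhibiting $(f,g)$ as an adjoint pair in $\X$, their images are a unit and counit for $(Ff, Fg)$, and applying $F$ to the two pasting equalities constituting the triangle identities in $\X$ yields the triangle identities in $\Y$.

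For the ``if'' direction, suppose $(Ff, Fg)$ is an adjoint pair in $\Y$ witnessed by a unit $\eta'$ and counit $\epsilon'$. Let $I_\eta$ (resp.\ $I_\epsilon$) be the locally composable pasting shape carrying a unit (resp.\ counit) 2-cell, so that $\eta' \in \Y(I_\eta)$ and $\epsilon' \in \Y(I_\epsilon)$. The one-truncations $\dtr_{\leq 1}I_\eta$ and $\dtr_{\leq 1}I_\epsilon$ record only the boundary $1$-cells, which in each case are among $f$, $g$ and identities; these boundaries are already defined over $\X$, and $F$ carries them to the boundaries of $\eta'$ and $\epsilon'$. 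Hence pairing each of $\eta'$ and $\epsilon'$ with its boundary in $\X$ gives points of $\Y(I_\eta)\times_{\Y(\dtr_{\leq 1}I_\eta)}\X(\dtr_{\leq 1}I_\eta)$ and of the analogous fibre product for $I_\epsilon$. By \ref{prop.lff-characterizations} the comparison maps out of $\X(I_\eta)$ and $\X(I_\epsilon)$ are equivalences, so I obtain an essentially unique unit $\eta \in \X(I_\eta)$ and counit $\epsilon \in \X(I_\epsilon)$ for $(f,g)$ with $F\eta \simeq \eta'$ and $F\epsilon \simeq \epsilon'$.

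It then remains to verify the triangle identities for $(\eta,\epsilon)$. Each triangle identity asserts that a prescribed pasting of $\eta$ and $\epsilon$ is homotopic to an identity 2-cell; concretely it is an equality of two points in a hom-space $\map_{\X(x,y)}(f,f)$, respectively $\map_{\X(y,x)}(g,g)$. Since $F$ is locally fully faithful, the induced map $\map_{\X(x,y)}(f,f) \rightarrow \map_{\Y(Fx,Fy)}(Ff,Ff)$ is an equivalence, so two such 2-cells are homotopic in $\X$ if and only if their images are homotopic in $\Y$. Because $F$ preserves pasting and $F\eta \simeq \eta'$, $F\epsilon \simeq \epsilon'$, the image of each triangle pasting is the corresponding triangle pasting of $\eta'$ and $\epsilon'$, which is homotopic to the identity by the triangle identities in $\Y$; transporting this homotopy back along the equivalence gives the triangle identities in $\X$, whence $(f,g)$ is an adjoint pair. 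The main obstacle I anticipate is purely organizational: confirming that the unit, counit, and triangle-identity diagrams are genuinely images of locally composable pasting shapes so that \ref{prop.lff-characterizations} applies, and matching the boundary data of $\eta'$ and $\epsilon'$ to cells defined over $\X$. Once this bookkeeping is in place, the argument is forced.
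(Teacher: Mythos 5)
Your proposal is correct and follows essentially the same route as the paper: lift the unit and counit along $F$ (essentially uniquely, using local full faithfulness applied to the unit/counit shapes) and then verify the triangle identities by using that $F$ detects homotopies between parallel 2-cells. The only cosmetic difference is that the paper invokes characterization (2) of \ref{prop.lff-characterizations} for the composable pasting shapes governing the triangle identities, whereas you check them via the hom-space formulation (characterization (1)) together with $F$ preserving pastings — the same proposition doing the same work.
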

\begin{proof}
	It is clear that functors between locally complete $\Theta_2$-Segal spaces preserve adjoint pairs. Hence, we must show that 
	$(u,v)$ is an adjoint pair if $(f(u), f(v))$ is an adjoint pair. Suppose that $\eta'$ and $\epsilon'$ are the unit and counit 
	for the adjunction $(f(u),f(v))$. Since the functor $f_{x,x} : X(x,x) \rightarrow Y(f(x), f(x))$ is fully faithful, we find 
	$\eta : \id_x \rightarrow vu$ in $X(x,x)$ such that $f(\eta) \simeq \eta'$. Similarly, we find such a lift $\epsilon \in X(y,y)$ of the counit $\eta'$. 
	Now, we have to verify that the triangle identities hold. We will just show (1) of \cref{def.adjunction}; identity (2) is handled similarly.
	Note that identity (1) for the adjunction $(f(u), f(v))$ witnesses the commutativity of the square 
	\[
		\begin{tikzcd}
			{[1]} \arrow[r]\arrow[d] & X(x,y) \arrow[d, "{f_{x,y}}"] \\
			{[0]} \arrow[r] & Y(f(x),f(y)),
		\end{tikzcd}
	\]
	where the top arrow selects the whiskered composite $u \xrightarrow{u\eta} uvu \xrightarrow{\epsilon u} u$. Since the right arrow in this square is fully faithful, it is left orthogonal 
	to the arrow on the left. Thus a (unique) diagonal lift in this square exists, which exhibits identity (1). 
\end{proof}

\begin{proposition}\label{prop.adjoints-mapping-cats}
Suppose that 
$u : x \rightleftarrows y : v$
is an adjunction in a locally complete $\Theta_2$-Segal space $X$. Then for every $z \in X$, 
we obtain induced adjunctions
\[
		u \circ (-) :X(z,x) \rightleftarrows X(z,y) : v \circ (-)
\]
\[
		(-) \circ v : X(y,z) \rightleftarrows X(x,z): (-) \circ u
\]
of $\infty$-categories.
\end{proposition}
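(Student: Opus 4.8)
The plan is to realize each induced pair as an adjunction in the $(\infty,2)$-category $\Cat_{(\infty,2)}$ of $\infty$-categories. First observe that since $\X$ is locally complete, \ref{prop.loc-complete-2ss} guarantees that each of the mapping Segal spaces $\X(z,x)$, $\X(z,y)$, $\X(x,z)$, $\X(y,z)$ is a complete Segal space, hence an $\infty$-category, so that all four functors in the statement genuinely live among $\infty$-categories. An adjunction of $\infty$-categories is the same thing as an adjunction in $\Cat_{(\infty,2)}$, and, by the definition of adjunction used here together with the free-adjunction description recalled in the Remark following \ref{prop.uniqueness-adjoints} (and the completeness of $\Cat_{(\infty,2)}$), producing such an adjunction amounts to producing a unit $2$-cell, a counit $2$-cell, and verifying the two triangle identities. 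Thus the whole statement reduces to supplying these data for $(f_*,g_*)$ and for the pre-composition pair.

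The conceptual heart of the argument is that these data are obtained by transporting the adjunction $(f,g)$ along representable functors. For fixed $z$, post-composition assembles into a functor of $2$-fold Segal spaces
\[
	\X(z,-) : \X \longrightarrow \Cat_{(\infty,2)}
\]
acting on objects by $w \mapsto \X(z,w)$, on vertical arrows by post-composition, and on $2$-cells by whiskering; dually, pre-composition assembles into a functor $\X(-,z) : \X^{1-\op} \to \Cat_{(\infty,2)}$, the passage to the $1$-opposite recording the contravariance of pre-composition in the arrow variable. I would then use the fact that any functor of $2$-fold Segal spaces preserves adjunctions: this is the faithfulness-free ``only if'' direction of \ref{prop.lff-adjoints}, valid for an arbitrary functor, since such a functor carries the unit and counit of $(f,g)$ to $2$-cells of the correct boundary and, preserving the pasting composites that govern the triangle identities (\ref{prop.lff-characterizations}), carries those identities to the corresponding identities for the images. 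Applying $\X(z,-)$ to $f \dashv g$ then yields $f_* \dashv g_*$, while applying $\X(-,z)$ yields the adjunction between the pre-composition functors, the $1$-opposite accounting for the exchange of left and right adjoint.

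The main obstacle is to construct the representable functors $\X(z,-)$ and $\X(-,z)$, that is, to package the composition and whiskering operations of $\X$ into coherent functors of $2$-fold Segal spaces valued in $\Cat_{(\infty,2)}$; I expect this to be the only nontrivial point, everything else being formal. The most economical way to obtain the conclusion, at least when $\X$ is complete, is via the free adjunction: by the Remark following \ref{prop.uniqueness-adjoints} the adjunction $(f,g)$ is classified by a functor $a : \adj \to \X$, and the composite $\X(z,-)\circ a : \adj \to \Cat_{(\infty,2)}$ then directly exhibits the desired adjunction of $\infty$-categories (and dually $\adj^{1-\op} \to \X^{1-\op} \to \Cat_{(\infty,2)}$ on the pre-composition side). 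To reduce the locally complete case to the complete one, I would replace $\X$ by its reflection into complete $2$-fold Segal spaces: this reflection is a functor of $2$-fold Segal spaces, hence preserves the adjunction $(f,g)$, and, because $\X$ is already locally complete, it leaves the mapping $\infty$-categories $\X(x,y)$ unchanged up to equivalence, so the induced adjunctions of $\infty$-categories coincide. Alternatively, one builds $\X(z,-)$ by hand from the Segal structure and checks coherence against the relevant composable pasting shapes of \ref{rem.pasting-shapes}, but the free-adjunction route keeps the bookkeeping to a minimum.
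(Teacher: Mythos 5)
Your proof has a genuine gap, and it sits exactly at the step you defer: the construction of the representable functors $\X(z,-)$ and $\X(-,z)$ as maps of $2$-fold Segal spaces into a $2$-fold Segal space model of the $(\infty,2)$-category of $\infty$-categories. This is not a routine coherence check to be postponed; it is the entire content of the statement, and it is substantially harder than the proposition itself. Packaging composition and whiskering in $\X$ into a single coherent functor is a Yoneda-type construction for $(\infty,2)$-categories, and nothing of the sort is available in the paper: the paper only produces a model $\CAT_\infty$ of the $(\infty,2)$-category of $\infty$-categories later (in Section 4, via $\CCAT_\infty$), and never constructs representable functors into it. The same gap infects your free-adjunction variant: even granting the Riehl--Verity classification $a : \adj \rightarrow \X$ (which, as the paper's remark notes, applies only when $\X$ is complete, via the comparison with enriched categories), you still need to compose $a$ with $\X(z,-)$, so representability remains the missing ingredient. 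Your reduction from locally complete to complete $\X$ also rests on an unproven claim, namely that Rezk completion of a locally complete $2$-fold Segal space leaves the mapping $\infty$-categories unchanged; this is plausible (only $\X_{\bullet,0}$ needs to be completed) but is nowhere established in the paper. Finally, a notational slip: adjunctions of $\infty$-categories live in the $(\infty,2)$-category $\CAT_\infty$, not in $\Cat_{(\infty,2)}$, which in this paper denotes the $\infty$-category of $(\infty,2)$-categories.

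The paper's own proof shows how to avoid all of this. It takes the unit $\eta : [1] \rightarrow \X(x,x)$ and counit $\epsilon : [1] \rightarrow \X(y,y)$ of $(f,g)$ and whiskers them using only the binary composition maps $\X(x,x) \times \X(z,x) \rightarrow \X(z,x)$ and $\X(y,y) \times \X(z,y) \rightarrow \X(z,y)$ supplied directly by the Segal conditions, obtaining candidate unit and counit $\eta_*$ and $\epsilon_*$ for $(f_*,g_*)$; the triangle identities for $(\eta_*,\epsilon_*)$ then follow from those for $(\eta,\epsilon)$. No functoriality in the object variable, no completion, and no classification by $\adj$ is needed, which is precisely why the argument works for locally complete (rather than complete) $2$-fold Segal spaces. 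If you want to salvage your approach, you would need to either construct $\X(z,-)$ honestly (a significant undertaking) or reformulate the transport of the adjunction so that it only invokes the fiberwise composition maps, at which point you have essentially reproduced the paper's proof.
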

\begin{proof}
We will just show the first adjunction; the other case is dual.
Let $\eta$ and $\epsilon$ be the unit and counit for the adjunction $(u,v)$ respectively. Then we obtain induced natural transformations
$$
\eta \circ (-) : [1] \times X(z,x) \xrightarrow{\eta \times \id} X(x,x) \times X(z,x) \xrightarrow{(-)\circ(-)} X(z,x)
$$
and 
$$
\epsilon \circ (-) : [1] \times X(z,y) \xrightarrow{\epsilon \times \id} X(y,y) \times X(z,y)  \xrightarrow{(-)\circ(-)} X(z,y).
$$
These are the candidate unit and counit of the desired adjunction. The triangle identities 
for $\eta$ and $\epsilon$ readily witness that the triangle identities hold for $\eta \circ (-)$ and $\epsilon \circ (-)$ as well.
\end{proof}

The following notion in the context of $\Theta_2$-Segal spaces (cf.\ \cite[Definition 5.8]{ShulmanFramedBicats}) will be used:

\begin{definition}\label{def.hor-closed}
	A locally complete $\Theta_2$-Segal space $X$ is said to be \textit{closed} if for every arrow $f : x\rightarrow y$ in $X$ and $z \in X$,
	the induced functors $$f \circ (-) : X(z,x) \rightarrow X(z,y) \quad \text { and } \quad (-) \circ f : X(y,z) \rightarrow X(x,z)$$ admit right adjoints.
	A double $\infty$-category $\P$ is said to be \textit{horizontally closed}
	if the horizontal fragment $\Hor(\P)$ is closed.
\end{definition}

We refer to \cite[Remark 5.10]{ShulmanFramedBicats} for a motivation of this terminology.

\section{Example: the double \texorpdfstring{$\infty$}{∞}-category of \texorpdfstring{$\infty$}{∞}-categories}\label{section.equipment-infty-cats}

We will now discuss the prototypical example of a double $\infty$-category. We recall the construction of 
 the double $\infty$-category 
$
\infty\CCAT  
$
with:
\begin{itemize} 
	\item the objects given by (small) $\infty$-categories,
	\item the vertical arrows given by functors,
	\item the horizontal arrows given by \textit{profunctors}, or equivalently by \textit{two-sided discrete fibrations},\footnote{These types of fibrations were named \textit{bifibrations} in \cite{HTT}. We will instead 
	use the terminology that is used in classical category theory for the same kind of fibrations.} or by \textit{correspondences},
\end{itemize}
that is due to Ayala and Francis \cite{AyalaFrancis}.

\subsection{The different perspectives on profunctors}\label{ssection.profunctors}

As a preparation, we will give a brief recollection of the equivalent notions that appear in the above 
description of the horizontal arrows of the (to be constructed) double $\infty$-category $\infty\CCAT$.

\begin{definition}
	Let $\C$ and $\D$ be two $\infty$-categories. Then a \textit{correspondence} 
	from $\C$ to $\D$ is a functor 
	$
	E \rightarrow [1]
	$
	together with equivalences $E_1 \simeq \C$ and $E_0 \simeq \D$. 
\end{definition}

\begin{definition}
	Let $\C$ and $\D$ be two $\infty$-categories.  Then a \textit{two-sided discrete fibration} from $\C$ to $\D$
	is a span 
	$
		(p,q) : E \rightarrow \C \times \D
		$
	with the following properties:
	\begin{itemize}
		\item $p$ is a cocartesian fibration so that its cocartesian arrows lie 
		over equivalences in $\D$,
		\item $q$ is a cartesian fibration so that its cartesian arrows lie over equivalences in $\C$,
		\item for every $c\in \C$ and $d \in \D$, the fiber $E_{c,d}$ is a space.
	\end{itemize}
	We will write
	$$
\DFib \subset \fun([1] \cup_{\{0\}} [1], \infty\Cat)
$$
	for the full subcategory of spans in $\infty\Cat$ given by the two-sided discrete fibrations. It comes 
	with a canonical projection to $\infty\Cat^{\times 2}$.
\end{definition}

\begin{remark}
	The above definition is equivalent to \cite[Definition 2.4.7.2]{HTT} and the description in \cite[Subsection 4.1]{AyalaFrancis} on account of \cite[Proposition 2.3.13]{HHLN}.
\end{remark}

\begin{definition}
	Let $\C$ and $\D$ be two (small) $\infty$-categories. Then a \textit{profunctor} from $\C$ to $\D$ is a functor 
	$
   F : \D^\op\times \C \rightarrow \S.
   $
   The $\infty$-category of profunctors is given by unstraightening the functor
$
\infty\Cat^{\op, \times 2} \rightarrow \widehat{\infty\Cat} : (\C, \D) \mapsto \fun(\D^\op \times \C, \S)
$
to a (large) cartesian fibration 
$$
\prof \rightarrow \infty\Cat^{\times 2}.
$$
\end{definition}

Profunctors, two-sided discrete fibrations, and correspondences encode the same data, as shown by Ayala and Francis:

\begin{proposition}[{\cite[Subsection 4.1]{AyalaFrancis}}]\label{prop.equiv-cor-bifib-prof}
	There are equivalences that fit in a commutative diagram
	\[ 
	\begin{tikzcd}[column sep = large]
		\DFib \arrow[r, "\simeq"]\arrow[dr]& \infty\Cat_{/[1]} \arrow[r,"\simeq"]\arrow[d, "{(\{1\}^*, \{0\}^*)}"'] & \prof \arrow[dl] \\
		& \infty\Cat^{\times 2}
	\end{tikzcd}
	\]
	of $\infty$-categories. 
	The top left functor carries a two-sided discrete fibration $E \rightarrow \C\times \D$ 
	to the cocomma correspondence $$\C  \cup_{E \times \{1\}} E \times [1] \cup_{E \times \{0\}} \D \rightarrow [1].$$
	In turn, the top right functor carries a correspondence $E \rightarrow [1]$ to the profunctor 
	$$
	E_0^\op \times E_1 \rightarrow E^\op \times E \xrightarrow{\map_E} \S. 
	$$
\end{proposition}

\begin{remark}
The equivalence between $\DFib$ and $\infty\Cat_{/[1]}$
was first exhibited by Stevenson \cite{Stevenson} using 
the language of model categories.
\end{remark}

\subsection{The construction via Conduch\'e fibrations} 

The notion 
of \textit{Conduch\'e fibrations} is central to the construction of the double $\infty$-category $\infty\CCAT$ of $\infty$-categories by Ayala and Francis.

\begin{definition}
	A functor $p:E\rightarrow \C$ between $\infty$-categories is called a \textit{Conduch\'e fibration} if the pullback functor 
	$
	p^* : \infty\Cat_{/\C} \rightarrow \infty\Cat_{/E}
	$
	admits a right adjoint. We will write $$\Con(\C) \subset \infty\Cat_{/\C}$$ for the
	 full subcategory of $\infty\Cat_{/\C}$ spanned by 
	the Conduch\'e fibrations.
\end{definition}

\begin{remark}
Conduch\'e fibrations are precisely the \textit{exponentiable} morphisms in $\infty\Cat$, see e.g.\ \cite[Section 2]{Comp} for a review of this general notion.
Conduch\'e fibrations are also called \textit{exponentiable fibrations} \cite{AyalaFrancisRozenblyum} \cite{AyalaFrancis}, or \textit{flat inner fibrations} \cite[Appendix B.3]{HA}.
\end{remark}

In the double $\infty$-category $\infty\CCAT$ that we will construct, there is a natural candidate for the horizontal composition 
of two correspondences $E \rightarrow [1]$ and $E' \rightarrow [1]$ that 
are equipped with an equivalence $E_0 \simeq E_1'$. Namely,
we could form the pushout $E \cup_{E_0} E'$, which is canonically fibered over $[2]$. Pulling back this pushout  
along the inner face map $d_1 : [1] \rightarrow [2]$, we obtain a candidate $E' \circ E\rightarrow [1]$ for the composite of $E$ followed by $E'$. The following 
characterization asserts that the Conduch\'e fibrations 
over $[2]$ are precisely those functors that are obtained 
by combining two correspondences via a pushout in the manner above:

\begin{proposition}[{\cite[Lemma 5.16]{AyalaFrancisRozenblyum}}]\label{prop.conduche-fib-characterization}
	A functor $p : E \rightarrow \C$ is a Conduch\'e fibration if and only if for every map $[2] \rightarrow \C$, 
	the induced commutative square 
	\[
		\begin{tikzcd}
			E \times_{\C} \{1\} \arrow[r]\arrow[d] & E \times_{\C} \{0\leq 1\} \arrow[d] \\ 
			E \times_{\C} \{1\leq 2\} \arrow[r] & E \times_{\C} [2]
		\end{tikzcd}
	\]
	of $\infty$-categories is a pushout square.
\end{proposition}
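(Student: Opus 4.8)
The plan is to read off the condition from the defining property of a Conduch\'e fibration---that the pullback functor $p^* : \Cat_\infty/\C \to \Cat_\infty/E$ be cocontinuous---after locating a convenient generating class of colimits. For the forward implication I would use that the spine inclusion exhibits $[2]$ as the pushout $\{0 \le 1\} \cup_{\{1\}} \{1 \le 2\}$ in $\Cat_\infty$. Given any $\sigma : [2] \to \C$, the projection $\Cat_\infty/\C \to \Cat_\infty$ creates this colimit, so $(\,[2] \xrightarrow{\sigma} \C\,)$ is the pushout of $(\{0 \le 1\} \to \C)$ and $(\{1 \le 2\} \to \C)$ over $(\{1\} \to \C)$ in the slice; applying the cocontinuous functor $p^*$ yields precisely the square in the statement. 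Hence every Conduch\'e fibration satisfies the condition.

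For the converse I would show that the $2$-simplex hypothesis forces $p^*$ to be cocontinuous. A functor into a cocomplete $\infty$-category preserves all small colimits as soon as it preserves small coproducts and pushouts; coproducts present no difficulty, since coproducts in $\Cat_\infty$ are universal and hence $E \times_\C \coprod_i X_i \simeq \coprod_i (E \times_\C X_i)$, so the entire content lies in pushouts. To reduce pushouts to simplicial data, I would exploit that the inclusion $j : \Delta_{/\C} \hookrightarrow \Cat_\infty/\C$ of the simplices over $\C$ is dense: each object $X \to \C$ is the canonical colimit $\colim_{[n] \to X} ([n] \to \C)$. Comparing $p^*$ with the left Kan extension $L$ of the restriction $p^*|_{\Delta_{/\C}}$ along $j$---which is cocontinuous and, as $j$ is fully faithful, agrees with $p^*$ on simplices---reduces the claim to the single assertion that $p^*$ preserves every canonical colimit, i.e. that $E \times_\C (-)$ commutes with the reconstruction $X \simeq \colim_{[n] \to X} [n]$.

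The crux is to extract this last equivalence from the hypothesis on $2$-simplices. The reconstruction of $X$ from its simplices is assembled, using coproducts (already preserved) together with pushouts, out of the gluings occurring along the faces of its simplices; the only place where $E \times_\C (-)$ can fail to commute with such a gluing is where a new composite of morphisms is produced, and the hypothesis says exactly that the fibre of $E$ over a composite decomposes as the pushout of the fibres over the two composable edges. In this way the displayed square is precisely the vanishing of the obstruction at the level of $2$-simplices.

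The step I expect to be the main obstacle is showing that no further obstruction appears in higher dimensions. Concretely, one must use the spine decomposition $[n] \simeq [1] \cup_{[0]} \cdots \cup_{[0]} [1]$ to rewrite $E \times_\C [n]$ as an iterated pushout built solely from the $1$- and $2$-dimensional fibres, so that the $2$-simplex hypothesis propagates to all $n$. Carrying out this induction while keeping track of the higher coherences in $\Cat_\infty$---rather than the purely $1$-categorical composition bookkeeping that suffices in the classical Conduch\'e theorem---is the genuine technical core of the argument.
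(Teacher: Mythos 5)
The paper itself offers no proof of this proposition: it is imported verbatim from \cite[Lemma 5.16(3)]{AyalaFrancisRozenblyum}, whose nontrivial content is essentially Lurie's theory of flat inner fibrations \cite[Appendix B.3]{HA}. So your proposal has to stand on its own. Your forward implication does: $[2]\simeq \{0\leq 1\}\cup_{\{1\}}\{1\leq 2\}$ in $\Cat_\infty$ (the spine inclusion is a categorical equivalence), the forgetful functor $\Cat_\infty/\C \rightarrow \Cat_\infty$ creates colimits, and a colimit-preserving $p^*$ therefore sends this pushout to the displayed square. That direction is correct and complete.

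The converse contains a genuine gap, located exactly where you lean on formal category theory. You assert that $L = \mathrm{Lan}_j(p^*|_{\Delta_{/\C}})$ along the dense inclusion $j : \Delta_{/\C} \hookrightarrow \Cat_\infty/\C$ ``is cocontinuous''. This is false in general: density does not make $\Cat_\infty/\C$ a presheaf category on $\Delta_{/\C}$ (it is a nontrivial localization of one), and left Kan extension along a dense fully faithful inclusion need not preserve colimits. Concretely, take $\C = [2]$ and $p : E = \{0\leq 2\} \hookrightarrow [2]$ the long edge. Since $j$ is fully faithful, $L$ agrees with $p^*$ on simplices, so $L(\mathrm{id}_{[2]}) \simeq E \simeq [1]$; but if $L$ were cocontinuous, applying it to the pushout $\{0\leq 1\}\cup_{\{1\}}\{1\leq 2\} \simeq [2]$ would yield the two-point discrete category $(E\times_\C\{0\leq1\}) \sqcup_{\emptyset} (E\times_\C\{1\leq2\})$, a contradiction. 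The failure is no accident: colimits in $\Cat_\infty$ are not universal --- that is precisely why the Conduch\'e condition is nonvacuous --- so any step in your reduction that would apply to an arbitrary $p$ must be wrong, and cocontinuity of $L$ is itself something that has to be extracted from the $2$-simplex hypothesis. Compounding this, the step you yourself flag as the ``main obstacle'' is not a loose end but \emph{is} the theorem: showing that the comparison $\colim_{\Delta_{/X}}(E\times_\C [n]) \rightarrow E\times_\C X$ is an equivalence for every $X \rightarrow \C$ is the several-page simplicial argument behind \cite[Lemma 5.16]{AyalaFrancisRozenblyum}, and it cannot be run on spine decompositions alone: a general $X$ is assembled from its simplices by gluing along the boundary inclusions $\partial[n] \rightarrow [n]$, not along spines, and one must show that $E\times_\C(-)$ carries these $1$-categorical gluings to $\infty$-categorical pushouts. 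So what you have is a correct proof of the easy direction, together with a plan for the hard direction whose two pillars are, respectively, false as stated and left unexecuted.
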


\begin{corollary}
	Every correspondence is a Conduch\'e fibration.
\end{corollary}
\begin{proof}
	This appears as \cite[Corollary 1.13]{AyalaFrancis}, cf.\ \cite[Example B.3.10]{HA}, and follows directly from the above characterization. 
\end{proof}

\begin{construction}\label{constr.ccat}
Conduch\'e fibrations are closed under base change. This can be deduced from the characterization in \cref{prop.conduche-fib-characterization}, 
but also holds more generally for exponentiable morphisms in an $\infty$-category (see e.g.\ \cite[Corollary 2.7]{Comp}). 
It follows that the functor
$
\infty\Cat^\op \rightarrow \widehat{\infty\Cat} : \C \mapsto \infty\Cat_{/\C}
$
obtained by straightening the (large) cartesian fibration $\ev_1 : \fun([1], \infty\Cat) \rightarrow \infty\Cat$, 
gives rise to a subfunctor 
$$
\infty\Cat^\op \rightarrow \widehat{\infty\Cat} : \C \mapsto \Con(\C).
$$
Restricting this functor along the inclusion $\Delta \xrightarrow{\op} \Delta \rightarrow \infty\Cat$,
we obtain a simplicial $\infty$-category
$$
\infty\CCAT : \Delta^\op \rightarrow \widehat{\infty\Cat} : [n] \mapsto \Con([n]).
$$
\end{construction}

\begin{proposition}
	The simplicial $\infty$-category $\infty\CCAT$ is a double $\infty$-category.
\end{proposition}
\begin{proof}
	This is demonstrated  in the proof of \cite[Corollary 1.25]{AyalaFrancis}.
\end{proof}

\begin{example}\label{ex.ccat-hor-identities}
	We can describe the horizontal identity arrow $\C \rightarrow \C$ in $\infty\CCAT$ associated with an $\infty$-category $\C$ as follows. As a correspondence, 
	it is given by the projection $\C \times [1] \rightarrow [1]$. Using the identifications of \cref{prop.equiv-cor-bifib-prof}, one verifies that it is given as a profunctor 
	and two-sided discrete fibration by 
	the mapping space functor $\map_\C(-,-) : \C^\op \times \C \rightarrow \S$ and the projection $(\ev_1, \ev_0) : \fun([1], \C) \rightarrow \C^{\times 2}$ respectively (cf.\ \cite[Example 4.3]{AyalaFrancis}).
\end{example}

From the perspective of two-sided discrete fibrations and profunctors, the horizontal composition in $\infty\CCAT$ can be described as follows:

\begin{proposition}[{\cite[Lemma 4.5]{AyalaFrancis}}]\label{prop.ccat-hor-composition}
	Let $F : \C \rightarrow \D$ and $G : \D \rightarrow \E$ be proarrows of $\infty\CCAT$. 
	Then their composite $G\circ F$ can be described as follows.
	\begin{enumerate}
		\item If $F$ and $G$ correspond to two-sided discrete fibrations $E \rightarrow \C \times \D$ and $ E' \rightarrow \D \times \E$ respectively, 
		then we can consider the induced span  
		$
		E \times_D E' \rightarrow \C \times \E.
		$
		If $W$ denotes the subcategory of $E \times_\D E'$ spanned by the morphisms that are carried to equivalences in $\C \times \E$ by this span, 
		we obtain an induced span 
	$$( E \times_\D E')[W^{-1}] \rightarrow  \C \times \E.$$ 
	This is precisely the two-sided discrete fibration that classifies $G\circ F$.
		\item If $F$ and $G$ correspond to profunctors $F' : \D^\op \times \C \rightarrow \S$ and $G' : \E^\op \times \D \rightarrow \S$ respectively, then 
		the composite $G\circ F$ corresponds to the profunctor
		$$
		\E^\op \times \C \rightarrow \S : (e,c) \mapsto \int^{d \in \D} G'(e,d) \times F'(d,c).
		$$
	\end{enumerate}
\end{proposition}

Using (co)end calculus and the above coend formula for the composite of profunctors, we may obtain the following result:

\begin{corollary}\label{cor.ccat-hor-closed}
	The double $\infty$-category $\infty\CCAT$ is horizontally closed. Concretely, let $W : \C \rightarrow \D$ be a profunctor 
	between $\infty$-categories, viewed as a horizontal arrow of $\infty\CCAT$. Then for every $X \in \infty\CCAT$, we obtain an adjunction
	\[
		W \circ (-) :  \Hor(\infty\CCAT)(X,\C) \rightleftarrows \Hor(\infty\CCAT)(X,\D)  : {}^W{(-)},
	\]
	where the right adjoint carries a profunctor $F :  X \rightarrow \D$ to the profunctor defined by the end
	$$
	{}^W{F}{}(c,x) = \int_{d \in \D} \map_\S(W(d,c), F(d,x)).
	$$
	Similarly, we have an adjunction 
	\[
		(-) \circ W : \Hor(\infty\CCAT)(\D,X) \rightleftarrows \Hor(\infty\CCAT)(\C,X) : (-)^W,
	\]
	where the right adjoint is given by 
	$$
	F^W(x,d) = \int_{c \in \C} \map_\S(W(d,c),F(x,c))
	$$
	for profunctors $F : \C \rightarrow X$.
\end{corollary}
\begin{proof}
	Throughout this proof, we will view the proarrows of $\infty\CCAT$ as profunctors under the equivalences 
	$\Hor(\infty\CCAT)(X,Y) \simeq \fun(Y^\op \times X, \S)
	$
	for $\infty$-categories $X$ and $Y$
	that are supplied by (see \cref{prop.equiv-cor-bifib-prof}).
	Let  $F : X \rightarrow \D$ be a profunctor. We have to show that the presheaf 
	$$
	\map_{\Hor(\infty\CCAT)(X,\D)}(W \circ (-), F) : \Hor(X,\C)^\op \rightarrow \S
	$$ 
	is representable. Let $G : X \rightarrow \C$ be a profunctor. Then we have the following (natural) end formula 
	\begin{align*}
	\map_{\Hor(\infty\CCAT)(X,\D)}(W \circ G, F) & \simeq \int_{(d,x) \in \D^\op \times X} \map_\S((W\circ G)(d,x), F(d,x))
	\end{align*}
	on account of \cite[Proposition 5.1]{GepnerHaugsengNikolaus}. By the Fubini rule for ends \cite[Theorem 2.2]{Loregian} (or see \cite{HaugsengEnds} for an alternative proof),
	this can be computed as the iterated end
	$$
	\map_{\Hor(\infty\CCAT)(X,\D)}(W \circ G, F) \simeq \int_{x \in X}\int_{d \in \D} \map_\S((W\circ G)(d,x), F(d,x)).
	$$
	On account of \cref{prop.ccat-hor-composition}, the right-hand side is computed by 
	\begin{gather*}
	 \int_{x \in X}\int_{d \in \D}\int_{c \in \C}\map_\S(W(d,c) \times G(c,x), F(d,x)) \\\mathrel{\rotatebox[origin=c]{-90}{$\simeq$}}  \\
	 \int_{x \in X}\int_{d \in \D}\int_{c \in \C}\map_\S(G(c,x), \map_\S(W(d,c), F(d,x))) \\\mathrel{\rotatebox[origin=c]{-90}{$\simeq$}} \\
	 \int_{x \in X}\int_{c \in \C}\map_\S(G(c,x), \int_{d \in \D} \map_\S(W(d,c), F(d,x))).
	\end{gather*}
	Using the end formula for mapping spaces of functor categories once again, we deduce that 
	\begin{align*}
		\map_{\Hor(\infty\CCAT)(X,\D)}(W \circ G, F) &\simeq \map_{\Hor(\infty\CCAT)(X,\C)}(G, {}^{W}F),
	\end{align*}
	naturally in $G$.
\end{proof}

\begin{remark}\label{rem.ccat-hor-closed-bifib}
	Riehl and Verity have exhibited the horizontal closure of $\infty\CCAT$ (albeit using a different language) from the perspective of 
	two-sided discrete fibrations in \cite[Theorem 12.3.6]{RiehlVerity}. 

	In the context of \cref{cor.ccat-hor-closed},  we may explicitly compute the two-sided discrete fibration 
	associated to 
	${}^WF$ 
	in the case that $\C = [0]$. In this case, $W$ corresponds to a right fibration $r : A \rightarrow \D$. If $F$ corresponds to a two-sided discrete 
	fibration $(p,q) : E \rightarrow X \times \D$, then ${}^WF$ corresponds to the left fibration $B \rightarrow X$ that sits in the pullback square 
	\[
		\begin{tikzcd}
			B \arrow[r]\arrow[d] & \fun(A, E) \arrow[d, "{\fun(A, (p,q))}"] \\
			X \arrow[r] & \fun(A, X) \times \fun(A, \D),
		\end{tikzcd}
	\]
	of $\infty$-categories. Here the bottom arrow is the product of the diagonal functor and the constant functor that selects $r$. This follows directly 
	from \cite[Proposition 4.24]{Stevenson}, or may be deduced from \cite[Lemma 12.3.7]{RiehlVerity}. An alternative computation is given in \cite{EquipII} using 
	double $\infty$-categorical methods.
\end{remark}

\begin{remark}\label{rem.ccat-size}
	Passing to larger universes, one can repeat the same constructions in this section to obtain double $\infty$-categories for larger $\infty$-cate\-gories.
	For instance, we have a (very large) double $\infty$-category 
	$$
	\widehat{\infty\CCAT} 
	$$ 
	of large $\infty$-categories. The proarrows of this double $\infty$-category 
	are again given by correspondences, two-sided discrete fibrations, and profunctors (but the latter are now valued in $\hat{\S}$, the $\infty$-category of large spaces).
	Note that there is a canonical functor $\infty\CCAT \rightarrow \widehat{\infty\CCAT}$ that is level-wise fully faithful. 
	Consequently, throughout this article, we will mostly be implicit about enlarging universes, 
	and hence leave out the decorations with hat symbols.
\end{remark}

\subsection{The vertical fragment}\label{ssection.comparison-models-infty2-infty-cats}
The vertical fragment $$\infty\CAT := \Vert(\infty\CCAT)$$ will be our construction of the $(\infty,2)$-cate\-go\-ry $\infty\CAT$ of 
$\infty$-categories. We will justify this by comparing this vertical fragment 
to another model for the $(\infty,2)$-category of $\infty$-categories due to Lurie \cite{LurieInfty2}.

There is a model structure on the category of \textit{marked simplicial sets} 
$
\sSet^+,
$
which is equivalent to the Joyal model structure on simplicial sets \cite[Chapter 3]{HTT}. We may then consider the category 
$
\Cat^{\sSet^+}
$
of \textit{marked simplicial categories}: categories enriched over $\sSet^+$ with respect to the cartesian monoidal structure.
This carries an induced model structure \cite[Section A.3]{HTT}. In \cite{LurieInfty2}, Lurie shows 
that there exists a \textit{nerve functor} 
$$
N_\mathrm{Lurie} : \Cat^{\sSet^+} \rightarrow (\infty,2)\Cat,
$$
that carries the weak equivalences $W$ between fibrant objects
to equivalences (by first passing through the category of \textit{scaled simplicial sets}). 
Let us write $\sSet^{+,\circ} \subset \sSet^+$ for the full marked simplicial subcategory of $\sSet^+$ spanned by the fibrant objects: the quasi-categories with their equivalences marked. 
Note that this is a fibrant marked simplicial category, and we will write 
$$
\infty\CAT_\mathrm{Lurie} := N_\mathrm{Lurie}(\sSet^{+,\circ}).
$$
The goal is now to show the following:

\begin{proposition}\label{prop.comparison-models-infty2-infty-cats}
	There exists an equivalence of $(\infty,2)$-categories
	$$
	\infty\CAT \simeq \infty\CAT_\mathrm{Lurie}.
	$$
\end{proposition}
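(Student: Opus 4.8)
The plan is to produce a comparison functor between the two $(\infty,2)$-categories and to check that it is an equivalence by means of the standard criterion in $\Cat_{(\infty,2)}$: a functor between $(\infty,2)$-categories is an equivalence precisely when it is essentially surjective on objects and locally fully faithful, i.e.\ an equivalence on all mapping $\infty$-categories (this is the form the Dwyer--Kan criterion takes in the axiomatization of \cite{BarwickSchommerPries}). Both sides are indeed objects of $\Cat_{(\infty,2)}$: that $\CAT_\infty$ is a complete $2$-fold Segal space follows from \ref{def.infty2-cat-infty-cats} together with the vertical completeness of $\CCAT_\infty$ proved in \ref{prop.ccat-vert-complete}, while $N(\sSet^{+,\circ})$ lies in $\Cat_{(\infty,2)}$ by the construction of the nerve $N$ recalled above.

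First I would build the comparison functor $N(\sSet^{+,\circ}) \to \CAT_\infty$. On objects both sides are (small) $\infty$-categories, so the functor is the identity there; the content lies in the higher cells. A $1$-cell of $N(\sSet^{+,\circ})$ is a functor $\C \to \D$, which is tautologically a vertical arrow of $\CCAT_\infty$, and a $2$-cell is a square filled by a natural transformation, which corresponds (via the description of $2$-cells in $\CCAT_\infty$ recalled in \ref{ex.rough-eq-infty-cats}) to a horizontally degenerate $2$-cell of $\CCAT_\infty$, hence to a $2$-cell of its vertical fragment $\CAT_\infty$. To promote this tautological assignment to an actual map of complete $2$-fold Segal spaces, I would work through the presentation of an $(\infty,2)$-category as a complete category object in $\Cat_\infty$, under which $\CAT_\infty$ is the Segal object $[n] \mapsto (\CAT_\infty)_{n,\bullet}$. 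By the Segal condition and \ref{prop.ccat-vert-complete}, its value at $[1]$ is the $\infty$-category of functors and natural-transformation squares, with composition induced by the ordinary composition of functors coming from the construction of $\CCAT_\infty$ through Conduch\'e fibrations (\ref{constr.ccat}). The nerve $N(\sSet^{+,\circ})$ admits the same description, now with composition coming from the strict $\sSet^+$-enrichment of $\sSet^{+,\circ}$, and matching these two Segal objects level by level yields the desired functor.

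With the functor at hand, essential surjectivity is immediate, as the map is the identity on equivalence classes of objects. For local full faithfulness I would appeal to \ref{prop.ccat-vert-complete}, which identifies the mapping $\infty$-category $\CAT_\infty(\C, \D)$ with $\fun(\C, \D)$; on the other side, the construction of $N$ identifies $N(\sSet^{+,\circ})(\C, \D)$ with the $\infty$-category underlying the mapping marked simplicial set of $\sSet^{+,\circ}$, which is again $\fun(\C, \D)$. Since the comparison functor realizes precisely the identification of these two copies of $\fun(\C, \D)$, it is an equivalence on every mapping $\infty$-category. The two properties together give the asserted equivalence $\CAT_\infty \simeq N(\sSet^{+,\circ})$.

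The main obstacle is the middle step: upgrading the evidently matching data (objects, $1$-cells, $2$-cells, and mapping categories) into a single coherent functor of $(\infty,2)$-categories, and confirming that this functor induces the expected identity-like equivalence on mapping categories rather than merely some abstract equivalence. Concretely, one has to verify that the composition law extracted from the Conduch\'e-fibration presentation of $\CCAT_\infty$ agrees coherently with strict composition of functors, and keep track of orientations when passing to the vertical fragment; here the cancellation of the twist $(-)^\op$ already exploited in the proof of \ref{prop.ccat-vert-complete} is what makes the variances line up correctly.
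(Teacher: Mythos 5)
There is a genuine gap, and it sits exactly where you flag ``the main obstacle'': the construction of the comparison functor. Your plan is to write down the evidently matching data (same objects, functors as $1$-cells, natural-transformation squares as $2$-cells) and then ``match these two Segal objects level by level.'' But a map of complete $2$-fold Segal spaces is not determined by a cell-by-cell assignment: you must produce maps of \emph{spaces} $(N(\sSet^{+,\circ}))_{n,m} \to (\CAT_\infty)_{n,m}$ coherently compatible with all face and degeneracy maps in both simplicial directions, and the face maps in question encode the two composition laws. On one side composition comes from the strict $\sSet^+$-enrichment (passed through scaled simplicial sets and Lurie's nerve), on the other from restriction of Conduch\'e fibrations along maps of $\Delta$. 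Reconciling these coherently \emph{is} the content of the proposition; it cannot be obtained by inspection, and nothing in your proposal supplies it. Indeed, once such a functor exists and is known to induce the evident identifications $\fun(\C,\D) \simeq \fun(\C,\D)$ on mapping $\infty$-categories, your final Dwyer--Kan step is essentially vacuous --- all the difficulty has been displaced into the construction you leave open. (A minor additional slip: being locally fully faithful, in the paper's terminology, is weaker than inducing equivalences on mapping $\infty$-categories; the criterion you want is essential surjectivity plus local \emph{equivalences}.)

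The paper's proof never constructs a functor by hand; instead it identifies the two bi-simplicial spaces naturally in $([n],[m])$ with one and the same object. The key external input is the lax straightening theorem of Haugseng--Hebestreit--Linskens--Nuiten (\ref{thm.lax-straightening}), which gives a natural equivalence $\map_{\Cat_{(\infty,2)}}([n]\otimes[m], N(\sSet^{+,\circ})) \simeq \map_{\Cat_\infty}([n],\Cocart^\lax([m]))$; combined with the Gray-tensor pushout decomposition of \ref{lemma.comparison-models-infty2-infty-cats}, this expresses $(N(\sSet^{+,\circ}))_{n,m}$ as a natural pullback. The last trick is that this pullback only involves functors $[n] \to \Cat_\infty/[m]$ whose values on objects are the degenerate fibrations $[m]\times\C \to [m]$; since $\Cocart^\lax([m])$ and $\Con([m])$ are both \emph{full} subcategories of $\Cat_\infty/[m]$ containing these objects, one may replace the former by the latter, and the resulting pullback is precisely the defining description of $\Vert(\CCAT_\infty)_{n,m}$. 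If you want to salvage your approach, you would need to cite this straightening result (or Lurie's $(\infty,2)$-categorical straightening on which it is built) as the mechanism identifying the two composition laws; without some such input the argument does not close.
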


To prove this, we will make use of a lax version of the straightening theorem due to Haugseng--Hebestreit--Linskens--Nuiten \cite{HHLN}. Recall that the ordinary straightening theorem sets up an equivalence 
$
 \fun(\C, \infty\Cat) \simeq \Cocart(\C),
$
 for every $\infty$-category
$\C$. Here 
$
\Cocart(\C) \subset \infty\Cat_{/\C} 
$
is the subcategory spanned by the cocartesian fibrations and morphisms that preserve cocartesian morphisms. 
Following \cite{HHLN}, let  us write 
$$
\Cocart^\lax(\C) \subset \infty\Cat_{/\C} 
$$
for the \textit{full} subcategory spanned by the cocartesian fibrations.
\begin{theorem}[{\cite[Theorem 5.3.1]{HHLN}}]\label{thm.lax-straightening}
	There exists an equivalence
	$$
	\Sq(\infty\CAT_\mathrm{Lurie})_{n,m} = \map_{(\infty,2)\Cat}([n] \otimes [m], \infty\CAT_\mathrm{Lurie}) \simeq \map_{\infty\Cat}([n], \Cocart^\lax([m]))
	$$
	that is natural in $[n],[m] \in \Delta$.
\end{theorem}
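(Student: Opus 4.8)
The plan is to obtain this as a specialization of the full lax straightening equivalence of Haugseng--Hebestreit--Linskens--Nuiten, extracting the asserted mapping-space formula through the universal property of the oplax Gray tensor. Write $\fun^{\mathrm{oplax}}([m], \X)$ for the $(\infty,2)$-category of functors $[m] \to \X$, oplax natural transformations, and modifications, and let $\Cocart^{\lax}([m])^{\mathrm{enh}}$ denote the $(\infty,2)$-categorical enhancement of $\Cocart^{\lax}([m])$ built from cocartesian fibrations over $[m]$ together with arbitrary (not necessarily cocartesian-preserving) morphisms over $[m]$. The heart of \cite[Theorem 5.3.1]{HHLN} provides an equivalence of $(\infty,2)$-categories
$$
\fun^{\mathrm{oplax}}([m], N(\sSet^{+,\circ})) \simeq \Cocart^{\lax}([m])^{\mathrm{enh}},
$$
natural in $[m]$. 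The displayed statement will be deduced from this by passing to underlying $\infty$-categories and probing with $[n]$.

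The key intermediate step is to recast the left-hand side of the theorem. The oplax Gray tensor is designed precisely so that $[n] \otimes [m]$ corepresents $n$-fold composable chains of oplax natural transformations between $[m]$-indexed diagrams; concretely, I would establish a natural equivalence
$$
\map_{\Cat_{(\infty,2)}}([n]\otimes[m], \X) \simeq \map_{\Cat_\infty}\bigl([n],\ \fun^{\mathrm{oplax}}([m], \X)^{(1)}\bigr),
$$
where $(-)^{(1)}$ denotes the underlying $\infty$-category. One route is to invoke the oplax internal-hom adjunction $-\otimes[m] \dashv \fun^{\mathrm{oplax}}([m], -)$ together with the fact that mapping out of the $1$-category $[n]$ into an $(\infty,2)$-category sees only its underlying $\infty$-category. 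To avoid committing to the still-developing enhanced Gray monoidal structure (\ref{rem.gray-tensor}), I would instead prefer to unwind the explicit generators-and-relations presentation of the gaunt $2$-category $[n]\otimes[m]$, whose functors into $\X$ manifestly amount to such chains of oplax squares, so that only the double cosimplicial object $([n],[m]) \mapsto [n]\otimes[m]$ is used.

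Combining these, I would set $\X = N(\sSet^{+,\circ})$, apply the underlying-$\infty$-category of the equivalence of the first paragraph to identify $\fun^{\mathrm{oplax}}([m], N(\sSet^{+,\circ}))^{(1)}$ with $\Cocart^{\lax}([m])$, and conclude the asserted equivalence; naturality in both $[n]$ and $[m]$ is inherited from the naturality of the cosimplicial Gray tensor and of the straightening equivalence. The main obstacle is the reconciliation carried out in the second paragraph: matching the handedness conventions (the oplax Gray tensor against the \emph{lax} morphisms of $\Cocart^{\lax}$ and the oplax transformations on the functor side, and correspondingly cocartesian versus cartesian), and justifying the corepresentability of $[n]\otimes[m]$ without appealing to the full $(\infty,2)$-categorical closed Gray structure. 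Performing the direct generators-and-relations verification, or else isolating precisely the portion of \cite{HHLN} that effects this extraction, is where the genuine work lies; the remaining identifications are then formal.
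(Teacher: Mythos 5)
Your proposal is correct and follows essentially the same route as the paper: the statement is obtained by specializing the full $(\infty,2)$-categorical lax straightening equivalence of \cite[Theorem 5.3.1]{HHLN} via the Gray tensor--hom correspondence, and the paper in fact treats it purely as a citation. The reconciliation you identify as the genuine work --- matching the enhanced oplax Gray tensor used in \cite{HHLN} with the classical Gray tensor $[n]\otimes[m]$ of $1$-categories --- is exactly what the paper delegates to \cite[Proposition 5.1.9]{HHLN} in the remark immediately following the theorem, so your proposed generators-and-relations verification would only be needed if you insisted on avoiding that cited comparison.
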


\begin{proof}[Proof of \cref{prop.comparison-models-infty2-infty-cats}]
	On account of \cref{ex.hor-vert-sq}, and fully faithfulness of $(-)_v$,
	it is necessary and sufficient to show that there is an equivalence $$\map_{\DbliCat}([n;m, \dotsc, m]_v, \CCAT_\infty) \xleftarrow{\phi_{n,m}} \map_{\DbliCat}([n;m, \dotsc, m]_v, \Sq(\infty\CAT_\mathrm{Lurie}))$$
	that is natural in $[n], [m] \in \Delta^{\times 2}$.
	We have a natural commutative diagram 
	\[ 
		\begin{tikzcd}
			\map(\{0,\dotsc,n\}, \infty\Cat) \arrow[r]\arrow[d,equal] & \map(\{0,\dotsc,n\}, \Cocart^\lax([m]))\arrow[d] & \arrow[l] \map([n], \Cocart^\lax([m]))\arrow[d]\\
			\map(\{0,\dotsc,n\}, \infty\Cat) \arrow[r] & \map(\{0,\dotsc,n\}, \Con([m])) & \arrow[l] \map([n], \Con([m]))
		\end{tikzcd}
	\]
	where the left horizontal arrows are induced by the functor $[m] \times - : \infty\Cat \to \infty\Cat_{/[m]}$. The middle and right vertical arrows 
	are induced by the full inclusion $\Cocart^\lax([m]) \subset \Con([m])$, see \cite[Lemma 2.15]{AyalaFrancis}. The pullbacks of the first and second row are naturally computed 
	by the source and target of the desired map $\phi_{n,m}$ on account of \cref{rem.h-v-inclusions} and \cref{thm.lax-straightening}, so that the diagram gives rise to a candidate map $\phi_{n,m}$.
	We now note that the middle and right vertical arrows in the above diagram are monomorphisms, 
	and that the square on the right is a pullback square. This allows us to conclude that $\phi_{n,m}$ is an equivalence.
\end{proof}

\section{Example: the double \texorpdfstring{$\infty$}{∞}-category of internal categories}\label{section.equipment-internal-cats}

This section is dedicated to the construction of the ambient double $\infty$-category
$$
\CCAT(\E)
$$ 
of categories internal to suitable $\infty$-category $\E$. It generalizes the construction of \cref{section.equipment-infty-cats}
so that we recover $\infty\CCAT$ when $\E=\S$. We will continue the study 
of this double $\infty$-category in the sequel article \cite{EquipII} to recover the foundations of internal category theory using the material of \cref{section.formal-category-theory}.

Throughout this section, we will fix an $\infty$-category $\E$ 
so that: 
\begin{enumerate}[noitemsep]
	\item $\E$ is limit complete,
	\item $\E$ has \textit{universal geometric realizations}, i.e.\ $\E$ admits all colimits 
	of $\Delta^\op$-shaped diagrams, and
	 each base change functor 
	$$
	f^* : \E_{/y} \rightarrow \E_{/x}
	$$
	preserves $\Delta^\op$-shaped colimits for every arrow $f : x\rightarrow y$ in $\E$.
\end{enumerate}
\noindent 
For example, complete and cocomplete locally cartesian closed $\infty$-categories meet these conditions. Assumption (2) implies the following:

\begin{lemma}\label{lem.geom-fib-prod-compatible}
	For any pullback square
	\[
		\begin{tikzcd}
			A \arrow[r]\arrow[d] & B\arrow[d] \\
			C \arrow[r] & D
		\end{tikzcd}
	\]
	in $\fun(\Delta^\op, \E)$ so that  $D$ is essentially constant, the induced square 
	\[
		\begin{tikzcd}
			\colim_{\Delta^\op} A \arrow[r]\arrow[d] & \colim_{\Delta^\op} B\arrow[d] \\
			\colim_{\Delta^\op} C \arrow[r] & \colim_{\Delta^\op} D \simeq D_0
		\end{tikzcd}
	\]
	on geometric realizations is a pullback square as well.
\end{lemma}
\begin{proof}
One may construct a canonical commutative triangle
\[
	\begin{tikzcd}
		\colim_{\Delta^\op} A \arrow[r]\arrow[dr] & \colim_{([n],[m])\in \Delta^\op \times \Delta^\op}B_n \times_{D_0} C_m \arrow[d] \\
		 & \colim_{\Delta^\op} B \times_{D_0} \colim_{\Delta^\op} C
	\end{tikzcd}
\]
in $\E$. The horizontal arrow is an equivalence since the diagonal $\Delta^\op \rightarrow \Delta^\op \times \Delta^\op$ is final. Hence, 
it remains to verify that the comparison map
$$
\colim_{([n],[m])\in \Delta^\op \times \Delta^\op}B_n \times_{D_0} C_m \simeq \colim_{[n] \in \Delta^\op} \colim_{[m]\in \Delta^\op} B_n \times_{D_0} C_m \rightarrow \colim_{\Delta^\op} B \times_{D_0} \colim_{\Delta^\op} C
$$
is an equivalence. But this now readily follows from the universality of geometric realizations in $\E$.
\end{proof}

\subsection{The double $\infty$-category of Segal objects}\label{ssection.conduche-modules}

We commence by introducing an internal variant of Conduch\'e fibrations. This is essentially an adaptation of the material in \cite{HaugsengMorita} 
applied to the cartesian case. 

\begin{definition}\label{def.cat-modules}
	Let $n \geq 0$ be an integer.
	 A \textit{Segal $[n]$-module in $\E$} 
	is a functor 
	$
	M : (\Delta_{/[n]})^\op \rightarrow \E 
	$
	so that for any map $f : [k] \rightarrow [n]$ in $\Delta$, the comparison map
	$$
	M(f) \rightarrow M(f|\{0,1\}) \times_{M(f|\{1\})} \dotsb \times_{M(f|\{k-1\})} M(f|\{k-1,k\})
	$$
	is an equivalence. We will write 
	$$
	{\Delta_{/[n]}}\Seg(\E) \subset \fun((\Delta_{/[n]})^\op, \E)
	$$
	for the full subcategory spanned by the Segal $[n]$-modules.
\end{definition}

Let us consider the cosimplicial object
$$
C : \Delta \xrightarrow{\op} \Delta \xrightarrow{\Delta_{/[-]}} \infty\Cat \xrightarrow{(-)^\op} \infty\Cat
$$
where the middle functor acts by pushing forward.
Then we obtain a simplicial $\infty$-category $$\fun(C_-, \E) : \Delta^\op \rightarrow \widehat{\infty\Cat} : [n] \mapsto \fun((\Delta_{/[n]})^\op, \E).$$
One may readily verify that 
 if $\alpha : [m] \rightarrow [n]$ is a structure map and $M$ is a Segal $[n]$-module in $\E$,
then the \textit{restriction of $M$ along $\alpha$} 
$$
\alpha^*M : (\Delta_{/[m]})^\op \xrightarrow{(\alpha^\op_!)^\op} (\Delta_{/[n]})^\op \rightarrow \E
$$
is a Segal $[m]$-module. Hence, we get a subobject $\Delta_{/[-]}\Seg(\E)$ of $\fun(C_-, \E)$ 
that is spanned by the Segal modules. To obtain a double $\infty$-category, we will need to 
restrict to the 
$[2]$-modules that are obtained by composing two $[1]$-modules. We will use the following auxiliary shape category that was defined by Haugseng:

\begin{definition}[{\cite[Definition 4.13]{HaugsengMorita}}]
	A map $f : [m] \rightarrow [n]$ is called \textit{cellular} if $f(i+1) \leq f(i) +1$ for all $i \in [m]$, i.e.\ if $f$ has convex image. The 
	full subcategory of $\Delta_{/[n]}$ spanned by these cellular maps is denoted by $\Lambda_{/[n]}$.
\end{definition}

We then have the following analogue of \cref{prop.conduche-fib-characterization}:

\begin{proposition}\label{prop.conduche-modules}
	Let $M$ be a Segal $[n]$-module in $\E$. Then, the following statements are equivalent: 
	\begin{enumerate}
		\item $M$ is left Kan extended along the inclusion $$(\Lambda_{/[n]})^\op \rightarrow (\Delta_{/[n]})^\op,$$
		\item for any map $\alpha : [2] \rightarrow [n]$, the restriction $\alpha^*M$ is left Kan extended along the inclusion 
		$$(\Lambda_{/[2]})^\op \rightarrow (\Delta_{/[2]})^\op,$$
		\item for any map $\alpha : [2] \rightarrow [n]$, the canonical map 
		$$
		\colim_{[k] \in \Delta^\op} M([k+2] \xrightarrow{0 \leq 1 \dotsb \leq 1\leq 2} [2] \xrightarrow{\alpha}
		 [n]) \rightarrow M([1] \xrightarrow{0\leq 2} [2] \xrightarrow{\alpha} [n])
		$$
		is an equivalence.
	\end{enumerate}
\end{proposition}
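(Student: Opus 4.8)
The plan is to prove the two equivalences (1) $\Leftrightarrow$ (3) and (2) $\Leftrightarrow$ (3) separately, treating condition (3) as the concrete pivot. The key observation that organizes everything is that statement (2) for $M$ is \emph{exactly} statement (1) applied to the categorical $3$-modules $\alpha^*M$ over $[2]$, so the second equivalence is a special case of the first once one unwinds the indexing. Throughout, the argument parallels that of \ref{prop.conduche-fib-characterization}, but now internally to $\C$, and the compatibility assumption (3) on $\C$ is what makes the internal colimits behave. First I would reduce (1) to a pointwise left Kan extension criterion: since the inclusion $i : (\Lambda/[n])^\op \rightarrow (\Delta/[n])^\op$ is fully faithful, the left Kan extension agrees with $M$ on all cellular maps, so (1) is equivalent to requiring that for every \emph{non-cellular} $g : [m] \rightarrow [n]$ the comparison map
$$
\colim_{(i\downarrow g)} M \longrightarrow M(g)
$$
be an equivalence, where the comma $\infty$-category $(i \downarrow g)$ is, after unraveling, the category of cellular factorizations of $g$. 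Here non-cellular means that $g$ has a non-convex image, and by the Segal condition $M(g)$ decomposes as an iterated fibered product over the edges of $g$; the cellular edges contribute nothing new, so the genuine content is localized at the non-cellular edges $[1] \xrightarrow{a \leq b} [n]$ with $b \geq a+2$.

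Next I would establish (2) $\Leftrightarrow$ (3). The point is that the \emph{only} non-cellular object of $\Delta/[2]$ is $\delta := ([1] \xrightarrow{0 \leq 2} [2])$, so by the pointwise criterion above, $\alpha^*M$ is left Kan extended along $(\Lambda/[2])^\op \rightarrow (\Delta/[2])^\op$ if and only if the comparison map at $\delta$ is an equivalence. A cellular map $c : [k] \rightarrow [2]$ admitting a factorization of $\delta$ necessarily has image $\{0,1,2\}$, and I would exhibit a cofinal functor $\Delta^\op \rightarrow (i \downarrow \delta)$ sending $[k]$ to the map $\sigma_k := ([k+2] \xrightarrow{0 \leq 1 \leq \dotsb \leq 1 \leq 2} [2])$, together with the canonical factorization of $\delta$ through its endpoints. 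Granting this cofinality, the comparison colimit becomes the geometric realization $\colim_{[k]\in\Delta^\op} M(\alpha\sigma_k)$ and the comparison map is precisely the map of (3) for $\alpha$. Running this for every $\alpha : [2] \rightarrow [n]$ yields (2) $\Leftrightarrow$ (3); the degenerate cases where $\alpha\delta$ is already cellular produce essentially constant $\Delta^\op$-diagrams and hold trivially by weak contractibility of $\Delta^\op$.

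The substantive direction is (3) $\Rightarrow$ (1) (equivalently, (2) $\Rightarrow$ (1)), proved by induction on $n$, or equivalently on the maximal jump size $b-a$ occurring in $g$. Given a non-cellular $g$, I would use the Segal decomposition to write $M(g)$ as a fibered product over the edges of $g$, reducing by the pointwise criterion to verifying the comparison map on each non-cellular edge $[1]\xrightarrow{a\le b}[n]$. For such an edge I would choose an intermediate value $a < c < b$, giving a map $\alpha : [2] \rightarrow [n]$ with $\alpha\delta = (a \leq b)$; applying (3) for this $\alpha$ resolves the edge as a geometric realization whose terms $M(\alpha\sigma_k)$ involve only strictly smaller jumps (refining $a\le b$ through $c$), to which the inductive hypothesis applies. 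The comparison colimit for $g$ is then assembled from the colimits for its individual edges, and the crucial step is to interchange these $\Delta^\op$-indexed colimits with the Segal fibered products computing $M(g)$: this is exactly the content of the compatibility assumption (3) on $\C$ (together with the siftedness of $\Delta^\op$ used to combine several such realizations via the diagonal). The main obstacle I anticipate is precisely this interchange and the attendant cofinality bookkeeping — one must arrange that each resolving colimit is genuinely a geometric realization so that assumption (3) applies, and that resolving the several non-cellular edges simultaneously does not spoil the comparison, which is a Fubini-type argument for geometric realizations and finite limits in $\C$.
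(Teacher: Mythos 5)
Your overall strategy is essentially the paper's own: a pointwise Kan-extension criterion for (1), explicit resolutions indexed by (products of) $\Delta^{\op}$ shown to be cofinal in the relevant comma categories (the paper's \ref{lemma.conduche-modules-finality}, proved via Quillen's Theorem A), reduction to non-cellular edges via the Segal condition, identification of (2)$\Leftrightarrow$(3) by isolating $\delta = d_1$ as the only non-cellular edge of $\Delta/[2]$, and, for (3)$\Rightarrow$(1), an induction on maximal jump size — which is exactly the paper's induction on the non-cellularity index $I(f)$ — with the interchange of geometric realizations against Segal fibered products (hypothesis (3) on $\C$, packaged in the paper as \ref{lemma.conduche-modules-kan-extension} and inside \ref{lemma.conduche-modules-induction}) correctly singled out as the load-bearing step. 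On these two fronts your outline matches the paper's proof in all essentials.

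There is, however, a genuine gap: you never prove (1)$\Rightarrow$(3). You announce ``(1)$\Leftrightarrow$(3)'' but your third paragraph supplies only (3)$\Rightarrow$(1), and your organizing ``key observation'' — that (2) is just (1) applied to $\alpha^*M$ — silently presupposes that restriction along $\alpha$ preserves the property of being left Kan extended, which is precisely the implication at issue and is not formal. Concretely: condition (3) resolves $M(\alpha d_1)$ by a bar construction through the \emph{single} vertex $\alpha(1)$, and its terms $M(\alpha\sigma_k)$ sit at maps $\alpha\sigma_k : [k+2] \rightarrow [n]$ that are themselves \emph{non-cellular} whenever $\alpha$ has a jump of length $\geq 2$; whereas what (1) gives (after your cofinality identification, in the notation of \ref{con.conduche-modules-finality}) is the equivalence
$$
\colim_{[k_\bullet] \in \prod_{D(\alpha d_1)}\Delta^\op} M\bigl((\alpha d_1)_{[k_\bullet]}\bigr) \xrightarrow{\ \simeq\ } M(\alpha d_1),
$$
a resolution with cellular terms through \emph{all} vertices of the convex hull. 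These are different diagrams, and neither is a cofinal subdiagram of the other, so no cofinality argument identifies the two colimits; one must re-resolve the terms $M(\alpha\sigma_k)$ using the edge comparison maps (which (1) does supply) and then interchange the resulting realizations with the Segal fibered products — this is exactly the paper's \ref{lemma.conduche-modules-induction}. Moreover that lemma requires $\alpha$ injective; for non-injective $\alpha$ the paper instead observes that $[k] \mapsto M(\alpha\sigma_k)$ admits a splitting, so the augmented simplicial object is automatically a colimit diagram. Your only remark in this vicinity — that degenerate cases give ``essentially constant'' $\Delta^\op$-diagrams, handled by weak contractibility of $\Delta^\op$ — is false: for constant $\alpha$ the diagram is $[k] \mapsto Y_{k+2}$ with $Y = (\{\alpha(0)\} \rightarrow [n])^*M$, a double d\'ecalage, which is split but not essentially constant. (Two smaller imprecisions: $\delta$ is the unique non-cellular \emph{edge} of $\Delta/[2]$, not its unique non-cellular object — any $[k]\rightarrow[2]$ with image $\{0,2\}$ is non-cellular — though this is harmless after your reduction to edges; and induction ``on $n$'' is not meaningful since $n$ is fixed.) The missing direction can be repaired with the tools you already isolated — the interchange, applied with the edge comparisons furnished directly by (1) rather than by induction, plus the splitting argument — but as written the proposal does not contain it.
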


\begin{remark}
Note that condition (3) of \cref{prop.conduche-modules} essentially asserts that if $M$ is a Segal $[2]$-module in $\E$, then $M$ is 
left Kan extended along $(\Lambda_{/[2]})^\op \rightarrow (\Delta_{/[2]})^\op$ if and only if $M(\{0\leq 2\})$ is the two-sided bar construction 
of $M(\{0 \leq 1\})$ and $M(\{1 \leq 2\})$ over $M(\{1\})$. For $\E = \S$, the same bar construction formula makes an appearance in \cite[Lemma 5.16(5)]{AyalaFrancisRozenblyum}.
\end{remark}

In order to prove the above proposition, we will make use of the following construction and observation (cf.\ Proposition 4.16 and Lemma 4.17 of \cite{HaugsengMorita}):

\begin{construction}\label{con.conduche-modules-finality}
	Let $f : [m] \rightarrow [n]$ be a map of posets. Then we will write $D(f)$ for the set that measures the non-cellularity of $f$, 
	i.e.\, it is given by the difference between the convex hull of $\im(f)$ and $\im(f)$:
	$$
		D(f) := \{f(0), f(0)+1,\dotsc, f(m)-1, f(m)\} \setminus \im(f).
	$$
	We define a functor
	$$
	\textstyle T_f : \prod_{D(f)}\Delta \rightarrow (\Lambda_{/[n]})_{f/} := (\Delta_{/[n]})_{f/} \times_{\Delta_{/[n]}} \Lambda_{/[n]}
	$$
	that carries an indexed poset $[k_\bullet]$ to the triangle
	\[
	\begin{tikzcd}
		{[m]} \arrow[rr,"{\iota_{[k_\bullet]}}"]\arrow[dr, "f"'] && {[m + \sum_{i \in D(f)} (k_i + 1)]} \arrow[dl, "{f_{[k_\bullet]}}"] \\ 
		& {[n]}
	\end{tikzcd}
	\]
	where $f_{[k_\bullet]}$ is the map
	$$
	\textstyle f_{[k_\bullet]} : [1+\sum_{i \in D(f|\{0,1\})} (k_i + 1)] \cup_{[0]} \dotsb \cup_{[0]} [1 + \sum_{i \in D(f|\{m-1,m\})} (k_i + 1)] \rightarrow [n]
	$$
	so that $f^{j,j+1}_{[k_\bullet]} := f_{[k_\bullet]}|[1 + \sum_{i \in D(f|\{j,j+1\})}( k_i+1)]$ is given by 
	$$
	f^{j,j+1}_{[k_\bullet]}(x) = \begin{cases}
		f(j), & x = 0 \\
		t, & \sum_{i=f(j)+1}^{t-1} (k_i + 1) < x \leq \sum_{i=f(j)+1}^t(k_i+1) \\
		f(j+1), & \text{if } x= 1 + \sum_{i \in D(f|\{j,j+1\})} (k_i + 1),
	\end{cases}
	$$
	and the top map $\iota_{[k_\bullet]}$ is the unique injective map that fits in the triangle. It is readily verified 
	that this construction is functorial, with the functoriality 
	of $T_f$ being uniquely determined by pushing forward structure maps along the
	 canonical inclusions $$\textstyle \rho^i_{[k_\bullet]} : [k_i] \rightarrow [1 + \sum_{i \in D(f|\{j,j+1\})}( k_i+1)] \rightarrow [m + \sum_{i \in D(f)} (k_i + 1)].$$
\end{construction}

\begin{lemma}\label{lemma.conduche-modules-finality}
	Let $f : [m] \rightarrow [n]$ be a map of posets. Then the functor $T_f$ of \cref{con.conduche-modules-finality} is final.
\end{lemma}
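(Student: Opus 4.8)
The plan is to deduce finality from Lurie's $\infty$-categorical form of Quillen's Theorem A \cite[Theorem 4.1.3.1]{HTT}. Since both $\prod_{D(f)}\Delta$ and $f/(\Lambda/[n])$ are (nerves of) ordinary categories, by that criterion it suffices to prove that for each object $c$ of $f/(\Lambda/[n])$ --- that is, each factorization $[m] \xrightarrow{\alpha} [\ell] \xrightarrow{g} [n]$ of $f$ through a cellular map $g$ --- the comma category
$$
\prod_{D(f)}\Delta \times_{f/(\Lambda/[n])} \bigl(f/(\Lambda/[n])\bigr)_{c/}
$$
is weakly contractible. First I would unwind its objects: they are pairs of an indexed poset $[k_\bullet] \in \prod_{D(f)}\Delta$ together with a morphism $c \to T_f([k_\bullet])$ in $f/(\Lambda/[n])$, i.e.\ a monotone map $\beta : [\ell] \to [m + \sum_{i}(k_i + 1)]$ with $f_{[k_\bullet]} \circ \beta = g$ and $\beta \circ \alpha = \iota_{[k_\bullet]}$.

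The heart of the argument is then to analyze which maps $\beta$ can occur. The key point is that $f_{[k_\bullet]}$ inserts the newly added vertices precisely over the gap values $D(f)$, while its fibre over each value of $\im(f)$ consists only of the (images of the) original vertices of $[m]$; cellularity of $g$ together with $g\alpha = f$ moreover forces $\im(g)$ to contain the convex hull of $\im(f)$, so each gap value $i \in D(f)$ has non-empty fibre $g^{-1}(i)$. Consequently the equation $f_{[k_\bullet]}\circ\beta = g$ rigidifies $\beta$ over the blocks lying above $\im(f)$, while the constraint $\beta\circ\alpha = \iota_{[k_\bullet]}$ pins down $\beta$ along the image of $\alpha$; all remaining freedom is concentrated, independently, in the blocks $g^{-1}(i)$ for $i \in D(f)$, on each of which $\beta$ restricts to an arbitrary monotone map $[p_i] \to [k_i]$ with $p_i := \abs{g^{-1}(i)} - 1$. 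Tracking the functoriality in $[k_\bullet]$, I would identify the comma category with a product of coslice categories
$$
\prod_{i \in D(f)} \Delta_{[p_i]/},
$$
each factor of which admits an initial object $\id_{[p_i]}$ and is therefore weakly contractible; equivalently, the comma category itself has an initial object, given by $T_f\bigl(([p_i])_{i \in D(f)}\bigr)$ together with the canonical map out of $c$.

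Since a finite product of weakly contractible categories is again weakly contractible, the comma category is weakly contractible for every $c$, and Theorem A yields that $T_f$ is final. I expect the main obstacle to lie in the second step: verifying rigorously that a morphism $c \to T_f([k_\bullet])$ splits into independent gap-wise data, with no cross-constraints at the block boundaries (monotonicity across boundaries being automatic from the ordering of the target values) and no residual freedom over $\im(f)$, and that this splitting is natural in $[k_\bullet]$ so that the identification with the product of coslices is an equivalence of categories. This bookkeeping is most transparently handled by writing down the explicit initial object and checking its universal property directly, after which weak contractibility, and hence finality, follows formally.
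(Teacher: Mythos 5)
Your proof runs Quillen's Theorem A with the wrong variance, and this is not a cosmetic issue. You test the coslices $\prod_{D(f)}\Delta \times_{f/(\Lambda/[n])} \bigl(f/(\Lambda/[n])\bigr)_{c/}$, whose objects are maps $c \to T_f([k_\bullet])$; by \cite[Theorem 4.1.3.1]{HTT} this would show that colimits indexed by $f/(\Lambda/[n])$ restrict along $T_f$. But the colimits this lemma feeds into (the pointwise formula for the left Kan extension along $(\Lambda/[n])^\op \rightarrow (\Delta/[n])^\op$, used in \ref{lemma.conduche-modules-kan-extension} and \ref{lemma.conduche-modules-induction}) are indexed by the \emph{opposite} category $(f/(\Lambda/[n]))^\op$, so the finality that is actually asserted and needed is that of $T_f^\op$ --- the lemma's ``final'' must be read this way, as both the paper's proof and its applications make clear. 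By Theorem A applied to $T_f^\op$, this amounts to weak contractibility of the slices $T_f/X := \prod_{D(f)}\Delta \times_{f/(\Lambda/[n])} \bigl(f/(\Lambda/[n])\bigr)_{/X}$, whose objects are maps $T_f([k_\bullet]) \rightarrow X$; this is exactly what the paper checks, by exhibiting a final object: take $k_i = \abs{g^{-1}(i)}-1$ and let $\phi$ restrict to the order isomorphisms $[k_i] \cong g^{-1}(i)$.

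The two conditions genuinely differ here, and the one you chose is false. Take $f : [0] \rightarrow [1]$ the inclusion of the vertex $0$. Then $D(f) = \emptyset$, so $\prod_{D(f)}\Delta = \ast$ and $T_f$ picks out the trivial factorization $([0], \id_{[0]}, f)$. Let $c$ be the object given by the factorization $f = \id_{[1]} \circ f$ (note that $\id_{[1]}$ is cellular). A morphism $c \rightarrow T_f(\ast)$ would be a map $\beta : [1] \rightarrow [0]$ with $f \circ \beta = \id_{[1]}$, which is impossible since $f\beta$ is constant; so your comma category is empty, hence not weakly contractible, and your claimed identification with $\prod_{i \in D(f)}\Delta_{[p_i]/} = \ast$ fails. (By contrast, there is exactly one morphism $T_f(\ast) \rightarrow c$, namely $\phi = f$, so the paper's slice is a point.) The failure occurs precisely at the step you flagged as delicate: in your direction, vertices of $[\ell]$ lying over values of $g$ outside the convex hull of $\im(f)$ have empty fiber under $f_{[k_\bullet]}$, so no valid value of $\beta$ exists, and vertices lying over $\im(f)$ but outside $\im(\alpha)$ leave residual freedom, which can even disconnect the coslice. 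In the paper's direction neither problem can occur, since every vertex of the \emph{source} $T_f([k_\bullet])$ lies in the image of $\iota_{[k_\bullet]}$ or of some $\rho^i_{[k_\bullet]}$; a map $T_f([k_\bullet]) \rightarrow X$ is then exactly a tuple of monotone maps $[k_i] \rightarrow g^{-1}(i)$, giving $T_f/X \cong \prod_{i\in D(f)}\Delta_{/[p_i]}$, which has a terminal object. Your block-by-block analysis and the posets $[p_i]$ are the right ingredients, but they must be applied to maps out of $T_f([k_\bullet])$, not into it.
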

\begin{proof}
	In light of Quillen's theorem A \cite[Theorem 4.1.3.1]{HTT}, it suffices to show that the category 
	$$
	\textstyle T_{f/X} := (\Lambda_{/[n]})_{f//X} \times_{(\Lambda_{/[n]})_{f/}}\prod_{D(f)}\Delta
	$$
	is weakly contractible for every $X \in (\Lambda_{/[n]})_{f/}$. Note that such $X$ is given by a triangle 
	\[
		\begin{tikzcd}
			{[m]} \arrow[rr, "h"]\arrow[dr, "f"'] && {[l]} \arrow[dl, "{g}"] \\ 
			& {[n]},
		\end{tikzcd}
	\]
	so that $g$ is cellular. It is readily verified that maps $\phi : T_f([k_\bullet]) \rightarrow X$ in $(\Lambda_{/[n]})_{f/}$
	are uniquely determined by the restrictions $\phi\rho^i_{[k_\bullet]} : [k_i] \rightarrow [l]$. 
	Since each restriction should have image in the (non-empty) fiber $g^{-1}(i) \subset [l]$, 
	we see that the choice of $\phi$ so that 
	$
	\phi\rho^i_{[k_\bullet]} : [k_i] \rightarrow [l]
	$
	is the unique injective map with image $g^{-1}(i)$, constitutes a final object of the lax slice $T_{f/X}$.  
\end{proof}

Additionally, we will need the following two results:

\begin{lemma}\label{lemma.conduche-modules-kan-extension}
	Suppose that $N : (\Lambda_{/[n]})^\op \rightarrow \E$ satisfies the Segal condition as in \cref{def.cat-modules}. Then 
	the left Kan extension $M : (\Delta_{/[n]})^\op \rightarrow \E$ of $N$ along $(\Lambda_{/[n]})^\op \rightarrow (\Delta_{/[n]})^\op$ satisfies the Segal condition as well.
\end{lemma}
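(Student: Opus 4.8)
The plan is to feed the pointwise formula for the left Kan extension into the finality statement of \ref{lemma.conduche-modules-finality}, thereby obtaining an explicit colimit description of $M$, and then to verify the Segal condition for $M$ directly from this description, using the Segal condition for $N$ together with assumption (3) on $\C$. Concretely, for $f : [k] \rightarrow [n]$ the comma category indexing the pointwise left Kan extension of $N$ along $(\Lambda/[n])^\op \rightarrow (\Delta/[n])^\op$ is (the opposite of) the category $f/(\Lambda/[n])$ of \ref{con.conduche-modules-finality}. Since \ref{lemma.conduche-modules-finality} shows that $T_f : \prod_{D(f)} \Delta \rightarrow f/(\Lambda/[n])$ is final, and weak contractibility is stable under passage to opposites, the left Kan extension reduces to the (multivariable) geometric realization
$$
M(f) \simeq \colim_{[k_\bullet] \in \prod_{D(f)} \Delta^\op} N(f_{[k_\bullet]}).
$$
In particular, vertices $f|\{i\} : [0] \rightarrow [n]$ are cellular, so $D(f|\{i\}) = \emptyset$ and $M(f|\{i\}) \simeq N(f|\{i\})$, while for an edge the same formula reads $M(f|\{i,i+1\}) \simeq \colim_{[k_\bullet]} N(f^{i,i+1}_{[k_\bullet]})$, indexed by $\prod_{D(f|\{i,i+1\})} \Delta^\op$.

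Next I would exploit that non-cellularity is detected edgewise. From the convex-hull description of $D$ one has a disjoint decomposition $D(f) = \coprod_{j=0}^{k-1} D(f|\{j,j+1\})$, hence $\prod_{D(f)} \Delta^\op \simeq \prod_{j} \prod_{D(f|\{j,j+1\})} \Delta^\op$. By the construction of $T_f$, the cellular map $f_{[k_\bullet]}$ is obtained by gluing the maps $f^{j,j+1}_{[k_\bullet]}$ along the vertices lying over $f(1), \dotsc, f(k-1)$. Applying the Segal condition for $N$ (in the iterated form it implies, decomposing along any partition of the domain into subintervals) to each $f_{[k_\bullet]}$ therefore yields a natural equivalence
$$
N(f_{[k_\bullet]}) \simeq N(f^{0,1}_{[k_\bullet]}) \times_{N(f|\{1\})} \dotsb \times_{N(f|\{k-1\})} N(f^{k-1,k}_{[k_\bullet]}).
$$

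Finally I would commute the geometric realization past this iterated fiber product. Each factor $N(f^{j,j+1}_{[k_\bullet]})$ depends only on the coordinates in the block $\prod_{D(f|\{j,j+1\})} \Delta^\op$, while the base terms $N(f|\{j\})$ are constant in $[k_\bullet]$; thus the colimit is of a fiber product of multisimplicial objects whose base is essentially constant. This is precisely the situation governed by assumption (3) on $\C$ and its iterated form explained at the beginning of \ref{ssection.conduche-modules}, so the colimit passes into the fiber product. Combining this with the edgewise formulas of the first paragraph and $M(f|\{j\}) \simeq N(f|\{j\})$ produces
$$
M(f) \simeq M(f|\{0,1\}) \times_{M(f|\{1\})} \dotsb \times_{M(f|\{k-1\})} M(f|\{k-1,k\}),
$$
which is exactly the Segal condition for $M$. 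I expect this last step to be the main obstacle: one must organize the multivariable realization so that assumption (3) genuinely applies, using the siftedness of $\Delta^\op$ (as in the reduction following the list of hypotheses in \ref{ssection.conduche-modules}) to reduce the interchange to the case where each base is essentially constant, and handling the fiber products one block at a time so that the compatibility hypothesis can be invoked repeatedly.
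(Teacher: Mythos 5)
Your proposal is correct and follows essentially the same route as the paper's proof: the finality of $T_f$ from \ref{lemma.conduche-modules-finality} gives the colimit formula $M(f) \simeq \colim_{[k_\bullet] \in \prod_{D(f)}\Delta^\op} N(f_{[k_\bullet]})$, the Segal condition for $N$ rewrites each term as an iterated fiber product, and the decomposition $D(f) = \coprod_j D(f|\{j,j+1\})$ together with assumption (3) lets the realization pass through the fiber product. Your extra care about the opposite-category bookkeeping and the siftedness reduction only makes explicit what the paper leaves implicit.
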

\begin{proof} 
	Let $f : [m] \rightarrow [n]$ be a map. Then, in view of \cref{lemma.conduche-modules-finality} and the Segal condition 
	for $N$, we compute that
	\begin{align*}
		M(f) & = \colim_{[k_\bullet] \in \prod_{D(f)}\Delta^{\op}} N(f_{[k_\bullet]}) \\
		&\simeq \colim_{[k_\bullet] \in \prod_{D(f)}\Delta^{\op}} N(f^{0,1}_{[k_\bullet]}) \times_{N(f|\{1\})} \dotsb \times_{N(f|\{m-1\})} N(f^{m-1,m}_{[k_\bullet]}).
	\end{align*}
	Using the fact that we have a decomposition $D(f) = D(f|\{0,1\}) \coprod \dotsb \coprod D(f|\{m-1,m\})$ 
	and \cref{lem.geom-fib-prod-compatible}, we deduce that 
	$$
	M(f) \simeq M(f|\{0,1\}) \times_{M(f|\{1\})} \dotsb \times_{M(f|\{m-1\})} M(f|\{m-1,m\}),
	$$
	as desired.
\end{proof}

\begin{lemma}\label{lemma.conduche-modules-induction}
	Suppose that $M$ is a Segal $[n]$-module in $\E$. 
	Let $f : [1] \rightarrow [n]$ be a map of posets that  
	 factors as $$[1] \xrightarrow{g} [m] \xrightarrow{\alpha} [n]$$ so that $\alpha$ is injective.
	Then there is a canonical commutative triangle 
	\[
		\begin{tikzcd}[column sep = tiny]
			\colim_{[k_\bullet] \in \prod_{D(g)}\Delta^\op} M(\alpha g_{[k_\bullet]})\arrow[dr] \arrow[rr] && \colim_{[k_\bullet] \in \prod_{D(f)}\Delta^\op} M(f_{[k_\bullet]}),\arrow[dl] \\
			& M(f)
		\end{tikzcd}
	\]
	and the top map is an equivalence as soon as 
	 for all $0 \leq j < m$, the comparison map 
	$$
	\colim_{[k_\bullet] \in \prod_{D(\alpha|\{j,j+1\})}\Delta^\op} M(\alpha^{j,j+1}_{[k_\bullet]}) \rightarrow M(\alpha|\{j,j+1\}) 
	$$
	is an equivalence.
\end{lemma}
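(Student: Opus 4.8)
The plan is to reduce both colimits to iterated two-sided bar constructions and to compare them factor by factor, using the Segal condition for $M$ together with the compatibility (3) of fibered products with geometric realizations in $\C$. The starting point is the combinatorial observation that, since $\alpha$ is injective and $f = \alpha g$, the gap set $D(f)$ of $f \colon [1] \to [n]$ consists of all integers strictly between $\alpha g(0)$ and $\alpha g(1)$, and the vertices $\alpha(i)$ with $i \in D(g)$ separate the gaps belonging to the individual edges of $\alpha$. Thus
$$
D(f) = \alpha(D(g)) \ \sqcup\ \coprod_{g(0) \leq j < g(1)} D(\alpha|\{j,j+1\}).
$$
Using the injectivity of $\alpha$ to reindex $\alpha(D(g)) \cong D(g)$, the index category factors as $\prod_{D(f)}\Delta \simeq \prod_{D(g)}\Delta \times \prod_{j} \prod_{D(\alpha|\{j,j+1\})}\Delta$, so that $R := \colim_{\prod_{D(f)}\Delta^\op} M(f_{[k_\bullet]})$ can be computed as an iterated colimit, with the ``vertex'' factors $\prod_{D(g)}\Delta^\op$ on the outside (coordinate $\kappa$) and the ``gap'' factors on the inside (coordinate $\lambda$).

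First I would produce the triangle. Both downward maps are the canonical comparison maps coming from the cellularization cocones: each $f_{[k_\bullet]}$ carries the inclusion $\iota_{[k_\bullet]}$ of \ref{con.conduche-modules-finality} with $f_{[k_\bullet]}\iota_{[k_\bullet]} = f$, whence (by contravariance of $M$) a map $M(f_{[k_\bullet]}) \to M(f)$, and these assemble into $R \to M(f)$; the analogous inclusions for $g$ assemble into $L \to M(f)$, where $L := \colim_{\prod_{D(g)}\Delta^\op} M(\alpha g_{[k_\bullet]})$. The horizontal map $L \to R$ is the comparison induced by the above factorization of the index categories together with the cellularization cocones for the individual edges of $\alpha$, and its compatibility with the two maps down to $M(f)$ is a naturality check on these cocones, verified via \ref{lemma.conduche-modules-finality}.

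To prove that $L \to R$ is an equivalence I would compute the inner colimit. Fixing $\kappa$, the Segal condition \ref{def.cat-modules}, applied along the vertices of $f_{[\kappa,\lambda]}$ lying over the $\alpha(i)$, decomposes $M(f_{[\kappa,\lambda]})$ as an iterated fibered product over the objects $M(\alpha(i))$, with edge-pieces $M(\alpha^{j,j+1}_{[\lambda^{(j)}]})$ depending only on the gap coordinate $\lambda^{(j)}$ and degenerate vertex-pieces recorded by $\kappa$ (independent of $\lambda$). Because the base objects $M(\alpha(i))$ are constant in $\lambda$, assumption (3) in its iterated form (valid because the diagonal $\Delta^\op \to \Delta^\op \times \Delta^\op$ is final, as discussed after the standing assumptions) lets me move $\colim_\lambda$ past the fibered products onto the individual edge-pieces, factor by factor. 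The hypothesis gives $\colim_{[\lambda^{(j)}] \in \prod_{D(\alpha|\{j,j+1\})}\Delta^\op} M(\alpha^{j,j+1}_{[\lambda^{(j)}]}) \simeq M(\alpha|\{j,j+1\})$ for each $j$, so the inner colimit becomes the fibered product of the full $\alpha$-edges $M(\alpha|\{j,j+1\})$ with the $\kappa$-thickened vertex-pieces, which is precisely $M(\alpha g_{[\kappa]})$ by the Segal condition again. Taking $\colim_\kappa$ then identifies $R$ with $L$, and tracing the identification back shows it is inverse to the comparison $L \to R$.

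The main obstacle is the bookkeeping in this middle step: organizing the Segal decomposition of $M(f_{[\kappa,\lambda]})$ so that the edge-pieces and the degenerate vertex-thickenings are cleanly separated at the shared junction vertices $\alpha(i)$, and checking that after applying the hypothesis the reassembled fibered product is genuinely $M(\alpha g_{[\kappa]})$, naturally in $\kappa$. The analytic heart is the commutation of the gap-colimit with the iterated fibered product; this is exactly where condition (3) on $\C$ (and its extension to diagrams indexed by products of $\Delta^\op$) is indispensable, and I expect the fully coherent version of this interchange -- rather than any single equivalence -- to be the most delicate part to write out.
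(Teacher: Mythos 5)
Your proposal is correct and follows essentially the same route as the paper's proof: the paper builds the comparison as a zig-zag through $\colim_{[k_\bullet] \in \prod_{D(f)}\Delta^\op} M(\alpha g_{[k_{\alpha(\bullet)}]})$, using finality of the projection $\alpha^* : \prod_{D(f)}\Delta^\op \rightarrow \prod_{D(g)}\Delta^\op$ (which is exactly your factorization of the index category, phrased without Fubini), and then establishes the equivalence by precisely the computation you spell out — Segal decomposition at the thickened vertices over $\alpha(D(g))$, commuting the gap-colimits past the fibered products via assumption (3) over the constant bases, applying the hypothesis edge by edge, and reassembling — all of which the paper compresses into the phrase ``a similar computation as in \ref{lemma.conduche-modules-kan-extension}.'' The only differences are organizational (iterated colimits versus a natural transformation of diagrams over the common index category), so nothing of substance is missing.
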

\begin{proof}
	Note that $\alpha$ restricts to an inclusion $D(g) \rightarrow D(f)$ and gives rise to a canonical 
	natural transformation 
	\[
		\begin{tikzcd}[row sep = tiny]
			&  \prod_{D(g)} \Delta^\op\arrow[r, "T_g"name=f]  & (\Delta_{/[m]})^\op \arrow[dr, "(\alpha_!)^\op", bend left = 10pt] \\
			\prod_{D(f)} \Delta^\op\arrow[ur, "\alpha^*", bend left = 10pt] \arrow[rrr, "T_f"'name=t, bend right = 10pt] &&& (\Delta_{/[n]})^\op.
			\arrow[from=f,to=t, Rightarrow, shorten >= 6pt, shorten <= 6pt]
		\end{tikzcd}
	\]
	Hence, we obtain maps
	$$
	\colim_{[k_\bullet] \in \prod_{D(g)}\Delta^\op} M(\alpha g_{[k_{\alpha(\bullet)}]}) \xleftarrow{\simeq} \colim_{[k_\bullet] \in \prod_{D(f)}\Delta^\op} M(\alpha g_{[k_{\alpha(\bullet)}]}) \rightarrow \colim_{[k_\bullet] \in \prod_{D(f)}\Delta^\op} M(f_{[k_\bullet]}) ,
	$$
	which are all compatible with the canonical projections to $M(f)$. 
	The first map is an equivalence as $\alpha^*$ is final, 
	so that we may obtain the desired commutative triangle by picking an inverse. 
	To show the latter statement, we 
	suppose that the comparison maps for $\alpha$ as stated in the lemma, 
	are equivalences. By 2-out-of-3, it suffices to show that the right arrow in the span displayed above is an equivalence, 
	which readily follows from a similar computation as in \cref{lemma.conduche-modules-kan-extension}.
\end{proof}

\begin{proof}[Proof of \cref{prop.conduche-modules}]
	Let us first show that (2) and (3) are equivalent. A Segal module 
	$M : (\Delta_{/[2]})^\op \rightarrow \E$ is left Kan extended along the inclusion $(\Lambda_{/[2]})^\op \rightarrow (\Delta_{/[2]})^\op$ if and only if 
	for any map $f : [k] \rightarrow [2]$, the canonical unit map 
	$$
	\colim_{g \rightarrow f \in (\Lambda_{/[2]})^\op/f} M(g) \rightarrow M(f)
	$$
	is an equivalence. In light of the assumption on $M$ and 
	\cref{lemma.conduche-modules-kan-extension}, both sides satisfy the Segal condition when varying $f$.
	Hence, one may reduce to checking that the above map is an equivalence for $k = 1$. 
	The only non-cellular map is then given by $f = d_1$, so that after
	rewriting the above colimit using \cref{lemma.conduche-modules-finality}, one directly deduces that (2) and (3) are equivalent.
	
	Next, we show that (1) implies (3). Suppose that $M$ 
	is a Segal $[n]$-module so that (1) holds. If $\alpha : [2] \rightarrow [n]$ 
	is an injective map, then the desired result 
	follows from applying \cref{lemma.conduche-modules-induction} with $g = d_1$. In the case 
	that $\alpha$ is not injective, the result follows from diagrammatic reasons. 
	To wit, we note that the simplicial object 
	$$
	S : \Delta^\op \rightarrow (\Delta_{/[n]})^\op : [k] \mapsto ([k+2] \xrightarrow{0 \leq 1 \dotsb \leq 1 \leq 2} [2] \xrightarrow{\alpha} [n]) 
	$$
	is (always) augmented by $\alpha d_1$, and this augmentation induces the comparison map
	$$
	\colim_{\Delta^\op} MS \rightarrow M(\alpha d_1)
	$$
	in question.
	In the case that
	$\alpha$ is not injective, one can verify that $S$ admits a \textit{splitting} (see \cite[Definition 2.3.13]{RiehlVerity}). 
	Hence, the composite $MS$ admits a splitting and the comparison map will be an equivalence on account of \cite[Proposition 2.3.15]{RiehlVerity}.

	Finally, suppose that $M$ is a Segal $[n]$-module so that (3) holds. Then we will show that $M$ satisfies (1). 
	To this end, we proceed by induction as follows. 
	For $f : [m] \rightarrow [n]$, let use define its \textit{non-cellularity index} $I(f)$ by 
	$$
	I(f) := \max_{0\leq j < m} \abs{D(f|\{j,j+1\})}.
	$$
	We will show that for every map $f$, the comparison map 
	$$
	c(f) : \colim_{[k_\bullet] \in \prod_{D(f)}\Delta^\op} M(f_{[k_\bullet]}) \rightarrow M(f)
	$$
	is an equivalence by induction on $I(f)$. If $I(f) = 0$, the statement is trivial. 
	Suppose now that the statement holds for all maps $f$ so that $I(f) \leq i$, with $i \geq 0$. In order to show that the statement holds for $i+1$, 
	we may use \cref{lemma.conduche-modules-kan-extension} to reduce to showing the statement 
	for any map $f : [1] \rightarrow [n]$ with $I(f) = i+1$. We may factor such a map $f$ as 
	$$
	[1] \xrightarrow{d_1} [2] \xrightarrow{\alpha} [n]
	$$
	so that $\alpha$ is injective with $I(\alpha) = i$. 
	It follows now directly from the assumptions and \cref{lemma.conduche-modules-induction} that $c(f)$ is an equivalence as well. 
\end{proof}

\begin{definition}
	If $M$ is a Segal $[n]$-module in $\E$ that satisfies the equivalent conditions of \cref{prop.conduche-modules}, 
	then we say that $M$ is a \textit{Conduch\'e module}. We write $$\Delta_{/[n]}\Con(\E) \subset {\Delta_{/[n]}}\Seg(\E)$$ for the full subcategory spanned by 
	the Conduch\'e modules.
\end{definition}

\begin{corollary}
	All Segal $[0]$-modules and Segal $[1]$-modules in $\E$ satisfy the Conduch\'e condition.
\end{corollary}

\begin{construction}
	In light of \cref{prop.conduche-modules}, the Conduch\'e property for Segal modules is preserved under 
	restriction along maps in $\Delta$. Consequently, we obtain a further subobject 
	$$
	\SSEG(\E) := \Delta_{/[-]}\Con(\E) \subset \Delta_{/[-]}\Seg(\E) \subset \fun(C_-, \E)
	$$
	that is spanned by the Conduch\'e modules in each level.
\end{construction}

Now, a standard argument shows the following:

\begin{proposition}\label{prop.double-infty-cat-cat-objs}
	The simplical $\infty$-category $\SSEG(\E)$ is a double $\infty$-category.
\end{proposition}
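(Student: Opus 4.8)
The plan is to verify the Segal condition for the simplicial $\infty$-category $\CCAT(\C)$, i.e.\ that for every $n$ the canonical map
\[
\Con_{n+1}(\C) \to \Con_2(\C) \times_{\Con_1(\C)} \dotsb \times_{\Con_1(\C)} \Con_2(\C),
\]
given by restriction along the spine edges $e_i = \{i-1 \le i\}\colon [1] \to [n]$ and the vertices $\{i\}\colon [0] \to [n]$, is an equivalence.

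The first step is to replace $\Con_{n+1}(\C)$ by an honest category of Segal functors on the cellular shape. Write $\Seg_{\Lambda/[n]}(\C) \subset \fun((\Lambda/[n])^\op, \C)$ for the full subcategory of functors satisfying the Segal condition of \ref{def.cat-modules} on cellular maps. I would show that restriction and left Kan extension along the fully faithful inclusion $(\Lambda/[n])^\op \hookrightarrow (\Delta/[n])^\op$ restrict to an adjoint equivalence $\Con_{n+1}(\C) \simeq \Seg_{\Lambda/[n]}(\C)$. Indeed, the restriction of a module is cellular-Segal; the left Kan extension of a cellular-Segal functor is again a module by \ref{lemma.conduche-modules-kan-extension} and is Conduch\'e by construction, since it satisfies characterization (1) of \ref{prop.conduche-modules}. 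The counit is an equivalence exactly because Conduch\'e modules are left Kan extended from $(\Lambda/[n])^\op$ (again characterization (1)), and the unit is an equivalence because the inclusion is fully faithful. Because every map into $[0]$ or $[1]$ is cellular we have $\Lambda/[0] = \Delta/[0]$ and $\Lambda/[1] = \Delta/[1]$, so this recovers $\Con_1(\C) \simeq \Cat(\C)$ and $\Con_2(\C) \simeq \mathrm{Mod}_2(\C) \simeq \Seg_{\Lambda/[1]}(\C)$, consistent with the fact that every bimodule is Conduch\'e.

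Under this equivalence the Segal map becomes restriction along the spine inclusion $j\colon \Lambda^{\mathrm{sp}}/[n] \hookrightarrow \Lambda/[n]$, where $\Lambda^{\mathrm{sp}}/[n]$ is the full subcategory of cellular maps whose image has at most two elements (this is because $e_i^\ast$ of a module corresponds to restricting its cellular part along $(e_i)_!\colon \Lambda/[1] \to \Lambda/[n]$, and similarly for vertices). The key point is that, \emph{once the long cells are discarded}, the spine category is a genuine iterated pushout of $1$-categories,
\[
\Lambda^{\mathrm{sp}}/[n] \simeq \Lambda/\{0 \le 1\} \cup_{\Lambda/\{1\}} \dotsb \cup_{\Lambda/\{n-1\}} \Lambda/\{n-1 \le n\},
\]
with each edge-piece equal to $\Delta/[1]$ and each overlap equal to $\Delta/[0]$. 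Applying $\fun((-)^\op, \C)$ turns this into the corresponding iterated pullback, and since the Segal maps of $\Lambda^{\mathrm{sp}}/[n]$ all live inside a single edge-piece, passing to Segal objects identifies $\Seg_{\Lambda^{\mathrm{sp}}/[n]}(\C)$ with the target $\Con_2(\C) \times_{\Con_1(\C)} \dotsb \times_{\Con_1(\C)} \Con_2(\C)$.

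It therefore remains to prove that $j^\ast \colon \Seg_{\Lambda/[n]}(\C) \to \Seg_{\Lambda^{\mathrm{sp}}/[n]}(\C)$ is an equivalence; this is the heart of the matter. I would argue that $j^\ast$, together with its right adjoint right Kan extension $j_\ast$, restricts to inverse equivalences. Since $j$ is fully faithful the counit $j^\ast j_\ast \to \id$ is an equivalence, so the content is that the unit $N \to j_\ast j^\ast N$ is an equivalence for every cellular-Segal $N$. This reduces to a pointwise computation: for a cellular $f$ with image $\{a, \dotsc, b\}$, the comma category indexing the right Kan extension at $f$ admits a cofinal (in the relevant variance) copy of the ``fence'' $\{a\} \to \{a, a{+}1\} \leftarrow \{a{+}1\} \to \dotsb \leftarrow \{b\}$, and the limit over this fence is precisely the Segal fiber product computing $N(f)$. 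The main obstacle is exactly this combinatorial analysis of $\Lambda/[n]$ and of the comma categories $(\Lambda^{\mathrm{sp}}/[n])_{/f}$ needed to establish the cofinality of the fence; this is the step where one must carefully track the (non-)cellularity bookkeeping, entirely parallel to the analogous arguments in \cite{HaugsengMorita}. Everything else is formal manipulation of adjunctions and the Segal condition.
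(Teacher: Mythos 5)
Your proposal is correct and takes essentially the same route as the paper's own proof: reduce to Segal functors on $(\Lambda/[n])^\op$ via left Kan extension (citing the same two results), identify the Segal map with restriction along the spine subcategory of cellular maps with image in some $\{i,i+1\}$, and invert that restriction using the right Kan extension $j_*$. The fence cofinality you flag as the remaining combinatorial work is exactly what the paper encapsulates in its asserted pointwise formula $j_*M(f) \simeq M(f|f^{-1}(\{\alpha,\alpha+1\})) \times_{M(f|f^{-1}(\{\alpha+1\}))} \dotsb \times_{M(f|f^{-1}(\{\omega-1\}))} M(f|f^{-1}(\{\omega-1,\omega\}))$, so there is no substantive difference.
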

\begin{proof}
	Let $\Delta_{/[n]}\Con'(\E) \subset \fun((\Lambda_{/[n]})^\op, \E)$ be the full subcategory spanned 
	by those functors that satisfy the Segal condition as in \cref{def.cat-modules}.
	Then it readily 
	follows from characterization (1) of \cref{prop.conduche-modules} and \cref{lemma.conduche-modules-kan-extension} that the fully faithful left Kan extension functor 
	$
	i_!: \fun((\Lambda_{/[n]})^\op, \E) \rightarrow \fun((\Delta_{/[n]})^\op, \E)
	$
	along $i : (\Lambda_{/[n]})^\op \rightarrow (\Delta_{/[n]})^\op$
	restricts to an equivalence between $\Delta_{/[n]}\Con'(\E)$ and $\Delta_{/[n]}\Con(\E)$. Consequently, 
	we see that it suffices to check that the canonical functor 
	$$
	p : \Delta_{/[n]}\Con'(\E) \rightarrow {\Delta_{/[1]}}\Seg(\E) \times_{\Seg(\E)} \dotsb \times_{\Seg(\E)} {\Delta_{/[1]}}\Seg(\E)
	$$
	is an equivalence.

	To this end, we note that this functor may be identified with the restriction of the left adjoint 
	that sits in the adjunction
	$$
	j^* : \fun((\Lambda_{/[n]})^\op, \E) \rightleftarrows \fun((\Lambda_{/[n]})^{\mathrm{cell}, \op},\E)  : j_*
	$$
	associated to \textit{right} Kan extending along the inclusion
	$$j : (\Lambda_{/[n]})^{\mathrm{cell},\op} \simeq (\Delta_{/[1]})^\op \cup_{\Delta^\op} \dotsb \cup_{\Delta^\op} (\Delta_{/[1]})^\op \rightarrow (\Lambda_{/[n]})^\op$$
	of the full subcategory of those cellular maps $f : [m] \rightarrow [n]$ so that $\im(f) \subset \{i,i+1\}$ for some $i$. 
	Let us write $\Delta_{/[n]}\Con''(\E) \subset \fun((\Lambda_{/[n]})^{\mathrm{cell}, \op},\E)$ for the full subcategory of 
	those functors that satisfy the Segal condition. Then $p$ may be identified with the restriction of $j^*$ 
	to $\Delta_{/[n]}\Con'(\E)$ and $\Delta_{/[n]}\Con''(\E)$. 
	Since $j_*$ is fully faithful, we thus see that $p$ is an equivalence if the following holds:
	\begin{enumerate}
		\item the left adjoint $j_*$ carries $\Delta_{/[n]}\Con''(\E)$ into $\Delta_{/[n]}\Con'(\E)$,
		\item if $M \in \Delta_{/[n]}\Con'(\E)$, then $M$ is right Kan extended along $j$, i.e.\, the unit
		$M \rightarrow j_*j^*M$ is an equivalence,
	\end{enumerate}
	in which case $j_*$ restricts to an inverse of $p$. 
	
	Both (1) and (2) follow from the following computation. Suppose that 
	 $M : (\Lambda_{/[n]})^{\mathrm{cell}, \op} \rightarrow \E$ is a functor, and let $f : [m] \rightarrow [n]$ be a cellular map, then 
	$$
	j_*M(f) \simeq M(f|f^{-1}(\{\alpha, \alpha+1\})) \times_{M(f|f^{-1}\{\alpha+1\})} \dotsb \times_{M(f|f^{-1}\{\omega-1\})} M(f|f^{-1}(\{\omega-1,\omega\})),
	$$
	where $\alpha := \min(f)$ and $\omega := \max(f)$. 
	Consequently, if $M$ satisfies the Segal condition then we get that
	\begin{displaymath}
	j_*M(f) \simeq M(f|\{0,1\}) \times_{M(f|\{1\})} \dotsb \times_{M(f|\{m-1\})} M(f|\{m-1,m\}). \qedhere
	\end{displaymath}
\end{proof}

Finally, we record the following functoriality of the construction above. It follows immediately from \cref{def.cat-modules} and \cref{prop.conduche-modules}.

\begin{proposition}\label{prop.int-ccat-functoriality}
	Suppose that $u : \E \rightarrow \F$ is a functor to an $\infty$-category $\F$ that also has properties (1) and (2) stated at the start of \cref{section.equipment-internal-cats}. 
	If $u$ preserves pullbacks and geometric realizations, then post-composition with $u$ gives 
	rise to a functor $$\SSEG(u) : \SSEG(\E) \rightarrow \SSEG(\F).$$
\end{proposition}	

\subsection{Completeness conditions} The constructed double $\infty$-category $\SSEG(\E)$ fails to be locally complete. To remedy this, we will need 
to pass to the subobject spanned by the complete Segal objects (see \cref{def.comp-cat-obj}).

\begin{construction}\label{con.equipment-int-ccat}
	We will consider the sub double $\infty$-category
	$$
	\CCAT(\E) \subset \SSEG(\E)
	$$
	so that $\CCAT(\E)_n \subset \SSEG(\E)_n = \Delta_{/[n]}\Con(\E)$ is the full subcategory spanned by those Conduch\'e $[n]$-modules 
	$M$ so that the restriction $(\{i\} \to [n])^*M$ is a complete Segal object in $\E$ for each $i \in [n]$. 
\end{construction}

The next goal is now to show that $\CCAT(\E)$ is a locally complete double $\infty$-category. We will use the following auxiliary construction.

\begin{construction} 
	The evaluation functor
	$
	\Delta^\op \times \fun(\Delta^\op, \E) \rightarrow \E 
	$
	may be right Kan extended to a functor
	$$
	\PSh(\Delta)^\op \times \fun(\Delta^\op, \E) \rightarrow \E : (A, X) \mapsto X(A)
	$$
	along the fully faithful inclusion $y^\op \times \id_{\fun(\Delta^\op, \E)}$, where 
	$y : \Delta \rightarrow \PSh(\Delta)$ denotes the Yoneda embedding. We will make use of this extension throughout this section. Its values are computed by the formula
	$$
	X(A) = \lim_{[n] \rightarrow A \in (\Delta_{/A})^\op} X([n])
	$$
	where we wrote $\Delta_{/A} := \PSh(\Delta)_{/A} \times_{\PSh(\Delta)} \Delta.$
\end{construction}

\begin{remark}
	Note that $X \in \Seg(\E)$ is complete if and only if the map 
	$
	X([0]) \rightarrow X(J)
	$
	induced by $J\rightarrow [0]$ is an equivalence. Here $J$ is the simplicial set that was defined in \cref{ex.segal-spaces}.
\end{remark}

\begin{lemma}\label{lem.int-ccat-slices}
	Suppose that 
	\[
	\begin{tikzcd}
		A \arrow[r,""]\arrow[d,""'] & B \arrow[d,""] \\
		C \arrow[r,""] & D
	\end{tikzcd}
	\]
	is a pushout square of simplicial spaces. Then 
	the induced commutative square
	\[
	\begin{tikzcd}
		\Delta_{/A} \arrow[r]\arrow[d] & \Delta_{/B} \arrow[d] \\
		\Delta_{/C} \arrow[r] & \Delta_{/D}
	\end{tikzcd}
	\]
	is a pushout square of $\infty$-categories. 
\end{lemma}
\begin{proof}
	The pushout square of simplicial spaces corresponds to the pushout square 
	\[
	\begin{tikzcd}
		\Delta_{/A} \arrow[r]\arrow[d] & \Delta_{/B} \arrow[d] \\
		\Delta_{/C} \arrow[r] & \Delta_{/D}
	\end{tikzcd}
	\]
	in the $\infty$-category $\RFib(\Delta)$ of right fibrations over $\Delta$ under the straightening equivalence 
	$\RFib(\Delta) \simeq \PSh(\Delta)$. The desired result now follows from the fact that the projection 
	$
	\RFib(\Delta) \subset \infty\Cat_{/\Delta} \rightarrow \infty\Cat 
	$
	preserves colimits on account of \cite[Corollary 7.6]{GepnerHaugsengNikolaus}.
\end{proof}

\begin{proposition}\label{prop.int-ccat-loc-comp}
	The double $\infty$-category $\CCAT(\E)$ is locally complete.
\end{proposition}
\begin{proof}
	To show that $\CCAT(\E)$ is locally complete, we have to show that the functor 
	$$
	\CCAT(\E)_0 \rightarrow \CCAT(\E)_\mathrm{eq}
	$$
	is fully faithful. In light of \cref{lem.int-ccat-slices}, this can be identified with the restriction of the functor 
	$
	p^* : \fun(\Delta^\op,\E) \rightarrow \fun((\Delta_{/J})^\op, \E)
	$
	induced by the projection $p : (\Delta_{/J})^\op \rightarrow \Delta^\op$. This functor admits a right adjoint $p_*$ given by right Kan extensions. It thus suffices to show that the unit 
	$$
	\C \rightarrow p_*p^*\C 
	$$
	is an equivalence for every $\C \in \Cat(\E)$. This may be checked level-wise. At level $[m] \in \Delta$, one may compute using the usual 
	formula for Kan extensions that it corresponds to the map
	$$
	\C([m]) \rightarrow \C([m] \times J)
	$$
	that is induced by the projection $[m] \times J \rightarrow [m]$. Let $S$ be the class of maps $f : A \rightarrow B$ in $\PSh(\Delta)$ so that $\C(f) : \C(B) \rightarrow \C(A)$ is an equivalence. Then one 
	may readily verify that $S$ is strongly saturated in the sense of \cite[Definition 5.5.4.5]{HTT}. Now, by assumption, $S$ contains the spine inclusions 
	$[1] \cup_{[0]} \dotsb \cup_{[0]} [1] \rightarrow [n]$ ($\C$ is a Segal object) and the map $J \rightarrow [0]$ ($\C$ is complete). To show that $[m] \times J \rightarrow [m]$ is contained in $S$, one may use 
	the spine inclusions to reduce to the case that $m =1$. This can now be deduced from Rezk's work \cite[Proposition 12.1]{RezkSeg}.
\end{proof}

The double $\infty$-categories of internal categories are functorial with respect to the following class of maps:

\begin{proposition}\label{prop.int-ccat-topos-functoriality}
	Suppose that $u : \E \rightarrow \F$ is a functor to an $\infty$-category $\F$ that also meets conditions (1) and (2) stated at the start of \cref{section.equipment-internal-cats}. If 
	$u$ preserves pullbacks and geometric realizations, 
	then post-composition with $u$ 
	gives rise to a functor $$\CCAT(u) : \CCAT(\E) \rightarrow \CCAT(\F).$$
\end{proposition}
\begin{proof}
	We have seen in \cref{prop.int-ccat-functoriality} that $u$ induces a functor 
	$ \SSEG(\E) \rightarrow \SSEG(\F)$. Hence, it remains to check that post-composition with $u$ preserves the completeness condition for Segal objects and this follows directly.
\end{proof}

\subsection{The case of spaces}
We conclude by showing that the above construction recovers the double $\infty$-category $\infty\CCAT$  when 
 $\E$ is given by the $\infty$-topos of spaces.

\begin{proposition}\label{prop.equipment-comparison-spaces}
	There exists an equivalence 
	$$
	\infty\CCAT \xrightarrow{\simeq} \CCAT(\S)
	$$
	of double $\infty$-categories.
\end{proposition}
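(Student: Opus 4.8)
The plan is to construct a comparison map of simplicial objects and then verify it is a levelwise equivalence, exploiting that both sides are double $\infty$-categories. Recall that by \ref{constr.ccat} we have $\CCAT_\infty([n]) = \Con([n]^\op)$, while by construction $\CCAT_\infty(\S)_n = \mathrm{CCon}_{n+1}(\S) \subset \fun((\Delta/[n])^\op, \S)$. First I would define, for a Conduch\'e fibration $p : E \to [n]^\op$ and a simplex $f : [k] \to [n]$, the space
$$ \Phi_n(E)(f) := \map_{\Cat_\infty/[n]^\op}(f^\op, E), $$
i.e.\ the space of lifts of $f^\op : [k]^\op \to [n]^\op$ to $E$. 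Since $[k]^\op$ is the colimit of its spine and $\map(-, E)$ sends colimits to limits, $\Phi_n(E)$ satisfies the Segal condition of \ref{def.cat-modules}; and since the restriction of $\Phi_n(E)$ to a vertex $i$ computes the complete Segal space of the fibre $E_i$, it is a complete categorical module. The assignment is contravariantly functorial in $f$, and a base-change computation shows it is natural in $[n] \in \Delta$: for $\alpha : [m] \to [n]$ one has $\Phi_n(E)(\alpha f) \simeq \map_{\Cat_\infty/[m]^\op}(f^\op, (\alpha^\op)^* E) = \Phi_m((\alpha^\op)^* E)(f)$, which is exactly the module restriction of \ref{def.cat-modules}. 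Here the twist $(-)^\op$ built into $\CCAT_\infty$ is precisely what is needed to match the variance. Thus $\Phi$ assembles into a map of simplicial objects valued in complete categorical modules.

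Next I would pin down the low levels. At level $0$, $\Phi_0 : \Con([0]^\op) = \Cat_\infty \to \Cat_\infty(\S)$ sends an $\infty$-category to its complete Segal space, which is the standard equivalence. At level $1$, every functor to $[1]^\op$ is a correspondence and every categorical bimodule is Conduch\'e, so $\Con([1]^\op) = \mathrm{Corr}$ and $\mathrm{CCon}_2(\S)$ is the total $\infty$-category of complete categorical bimodules, fibred over $\Cat_\infty^{\times 2}$ with fibre $\prof(\C, \D)$. Unwinding the definition, $\Phi_1$ sends a correspondence $E$ to the bimodule whose value on the top edge is $\map_E$; this is precisely the equivalence $\mathrm{Corr} \simeq \prof$ of \ref{prop.equiv-cor-bifib-prof} (modulo the identification $\prof \simeq \mathrm{CCon}_2(\S)$). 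Hence $\Phi_0$ and $\Phi_1$ are equivalences onto $\CCAT_\infty(\S)_0$ and $\CCAT_\infty(\S)_1$.

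It then remains to check that $\Phi_n$ takes values in the Conduch\'e modules, after which the conclusion is formal. Since both Conduch\'e conditions are detected on $2$-simplices (\ref{prop.conduche-fib-characterization} and \ref{prop.conduche-modules}(2)), this reduces to $n = 2$: given $E \to [2]^\op$ Conduch\'e, naturality of $\Phi$ along the inner face $d_1$ identifies the restriction of $\Phi_2(E)$ to $\{0 \le 2\}$ with $\Phi_1$ of the composite correspondence $d_1^* E$, and by \ref{prop.ccat-hor-composition} together with \ref{prop.equiv-cor-bifib-prof} this composite is computed by the two-sided bar construction of the two edge profunctors. This is exactly the module Conduch\'e condition \ref{prop.conduche-modules}(3) (cf.\ the bar-construction description recorded after that proposition), so $\Phi_n : \Con([n]^\op) \to \mathrm{CCon}_{n+1}(\S)$ is well defined. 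Finally, both $\CCAT_\infty$ and $\CCAT_\infty(\S)$ are double $\infty$-categories, hence categorical objects in $\Cat_\infty$, so each level is the iterated fibre product of level $1$ over level $0$; as $\Phi$ is a natural transformation that is an equivalence at levels $0$ and $1$, it is an equivalence at every level, and therefore an equivalence of double $\infty$-categories. Since $\Equip_\infty \subset \DblCat_\infty$ is a full subcategory and both objects are $\infty$-equipments, this is an equivalence of $\infty$-equipments.

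The main obstacle is the penultimate step: reconciling the Ayala--Francis picture, where composition of correspondences over $[2]^\op$ is encoded by a pushout of fibres in $\Cat_\infty$ (\ref{prop.conduche-fib-characterization}), with the Haugseng-style picture, where composition of modules valued in $\S$ is the two-sided bar construction (\ref{prop.conduche-modules}); showing that $\Phi$ intertwines these two descriptions, while keeping track of the opposite twist and the source/target conventions, is where the real work lies.
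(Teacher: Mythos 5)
Your proposal is, at its core, the paper's own proof: the map you define levelwise, $\Phi_n(E)(f) = \map_{\Cat_\infty/[n]^\op}(f^\op,E)$, is precisely the paper's transformation $y$, which is identified levelwise with the restriction of the Yoneda embedding along $\Delta/[n] \rightarrow \Cat_\infty/[n]$ (with the same op-twist), and your reduction of the Conduch\'e compatibility to $n=2$ via the two-sided bar construction is also how the paper concludes (it cites characterization (5) of \cite[Lemma 5.16]{AyalaFrancisRozenblyum}, where you route through \ref{prop.ccat-hor-composition} and the identification of the coend with the bar construction; these amount to the same computation). The one genuinely different choice is in the levelwise-equivalence step: you verify equivalences only at levels $0$ and $1$ and then bootstrap to all $n$ using the Segal conditions on both sides, whereas the paper applies the relative form of the Joyal--Tierney theorem at every level to get $\Cat_\infty/[n] \simeq \mathrm{CMod}_{n+1}(\S)$ in one stroke. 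Your bootstrap is sound and arguably more economical, but as written your level-$1$ step leans on an identification $\prof \simeq \mathrm{CCon}_2(\S)$ that is nowhere proved (in the paper or in your argument); the relative Joyal--Tierney equivalence over $[1]$ gives $\Cat_\infty/[1] \simeq \mathrm{CMod}_2(\S)$ directly and lets you avoid routing through $\prof$ and \ref{prop.equiv-cor-bifib-prof} entirely.

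The real soft spot is coherence. You specify $\Phi$ by levelwise formulas together with levelwise base-change equivalences, but in the $\infty$-categorical setting this data does not by itself assemble into a map of simplicial $\infty$-categories $\Delta^\op \rightarrow \widehat{\Cat}_\infty$: one needs the naturality to be given coherently, not simplex by simplex. This is exactly what the first paragraph of the paper's proof supplies: $y$ is constructed as a map of cartesian fibrations over $\Cat_\infty$ (induced by the Yoneda embedding $\Cat_\infty \rightarrow \PSh(\Delta)$, which preserves pullbacks), pulled back along the twisted inclusion $\mathrm{tw}$ and then straightened, using the naturality of (un)straightening from \cite[Proposition A.31]{GepnerHaugsengNikolaus} and \ref{lemma.presheaves-groth-constr} to recognize the target. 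Once $\Phi$ is produced this way, the rest of your argument (Segal condition, completeness, Conduch\'e image, bootstrap) goes through as outlined.
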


To this end, we will make use of the following fact:

\begin{lemma}\label{lemma.presheaves-slice}
Let $\C$ be an $\infty$-category, and let us write $y : \C \to \PSh(\C)$ for the Yoneda embedding.
For every $c \in \C$, left Kan extension along the projection $\C_{/c}\to \C$ gives rise 
to a natural equivalence
$$
\PSh(\C_{/c}) \to \PSh(\C)_{/y(c)}.
$$
\end{lemma}
\begin{proof}
	This is a direct consequence of \cite[Corollary 9.8]{GepnerHaugsengNikolaus}.
\end{proof}

\begin{proof}[Proof of \cref{prop.equipment-comparison-spaces}]
	We recall from \cref{ex.segal-spaces}, that there is a fully faithful functor 
	$\infty\Cat \to \PSh(\Delta)$. This functor induces a natural inclusion
	$$
	y_n : \infty\Cat_{/[n]} \to \PSh(\Delta)_{/[n]} \simeq \PSh(\Delta_{/[n]})
	$$
	for $[n] \in \Delta^\op$, where the second natural equivalence comes from \cref{lemma.presheaves-slice}.
	The essential image of the first functor in the composite is spanned by the complete Segal spaces over $[n]$. 
	One readily verifies that, under the second functor, these correspond precisely to the Segal $[n]$-modules $M$ for which the restriction 
	$(\{i\} \to [n])^*M$ is complete for each vertex $i \in [n]$.
	We now claim that $y_n$ carries the Conduch\'e fibrations on $[n]$ to Conduch\'e $[n]$-modules.  Using the naturality of 
	$y_n$, this may be reduced to checking for level $n=2$, where it follows directly from characterization (5) of \cite[Lemma 5.16]{AyalaFrancisRozenblyum}. 
	All in all, we conclude that $y$ restricts to a comparison functor
	$\infty\CCAT \to \CCAT(\S)$ between double $\infty$-categories. This must be an equivalence since it induces equivalences of $\infty$-categories at levels $n = 0, 1$.
\end{proof}

\section{\texorpdfstring{$\infty$}{∞}-Equipments}\label{section.equipments}

The next section is dedicated to defining a special class of double $\infty$-categories:
the
 $\infty$-equipments. We will show that the definition and equivalent characterizations of proarrow equipments 
in the strict context \cite[Definition 4.8]{ShulmanFramedBicats} 
may be adapted to the $\infty$-categorical context.
Subsequently, we will explain
that the double $\infty$-categories that were constructed in \cref{section.equipment-infty-cats} and \cref{section.equipment-internal-cats} 
are $\infty$-equip\-ments. 
We will conclude the section by showing that every $\infty$-equipment admits a co- and contravariant Yoneda embeddings.

\subsection{Companions and conjoints}\label{ssection.compconj} We recall the theory of companions and conjoints in double $\infty$-categories, as also defined in \cite{Comp}, 
and originally for double categories introduced by Grandis and Par\'e \cite{GrandisPare}.
These are the double categorical analogues of adjunctions in 2-category theory. Using this language, we will be able to speak about (co)representability of horizontal arrows, and this will be an important 
concept in the theory of $\infty$-equipments. 

\begin{definition}\label{def.compconj}
	Let $f : x \rightarrow y$ be a vertical arrow in a double $\infty$-category $\P$. A horizontal arrow 
	$F : x \rightarrow y$ in $\P$ is called the \textit{companion} of $f$  
	if there exist two 2-cells
	\[
		\eta = \begin{tikzcd}
			x\arrow[d, equal] \arrow[r, equal, ""name=f] & x\arrow[d, "f"] \\
			x \arrow[r, "F"'name=t] & y
			\arrow[from=f,to=t,Rightarrow, shorten <= 6pt, shorten >= 6pt]
		\end{tikzcd} 
		\quad 
		\text{and}
		\quad
		\epsilon = \begin{tikzcd}
			x \arrow[r, "F"name=f]\arrow[d, "f"' ] & y \arrow[d, equal] \\
			y \arrow[r, equal, ""'name=t] & y
			\arrow[from=f,to=t,Rightarrow, shorten <= 6pt, shorten >= 6pt]
		\end{tikzcd}
	\] 
	that satisfy the following two \textit{triangle identities}:
	\[
		\begin{tikzcd}
			x\arrow[d, equal] \arrow[r, equal, ""'name=h1] & x\arrow[d, "f"] \\
			x \arrow[r, ""name=h2]\arrow[d, "f"'] & y \arrow[d, equal] \\
			y \arrow[r, equal, ""'name=h3] & y
			\arrow[from=h1, to=h2, phantom, "\scriptstyle\eta"]
			\arrow[from=h2, to=h3, phantom, "\scriptstyle\epsilon"]
		\end{tikzcd}
		\simeq
		\begin{tikzcd}
			x\arrow[r,equal] \arrow[d, "f"'name=t1] & x \arrow[d, "f"name=f1] \\
			y \arrow[r, equal]  & y,
			\arrow[from=f1, to=t1, equal, shorten <= 14pt, shorten >= 14pt]
		\end{tikzcd}
		\quad 
		\begin{tikzcd}
			x \arrow[r, equal, ""name=h1]\arrow[d, equal] & x \arrow[d] \arrow[r, "F"name=h3] & y\arrow[d, equal] \\
			x \arrow[r, "F"'name=h2] & y \arrow[r, equal, ""'name=h4] & y
			\arrow[from=h1, to=h2, phantom, "\scriptstyle\eta"]
			\arrow[from=h3, to=h4, phantom, "\scriptstyle\epsilon"]
		\end{tikzcd}
		\simeq
		\begin{tikzcd}
			x\arrow[d,equal] \arrow[r, "F"name=f1] & y \arrow[d,equal] \\
			x \arrow[r, "F"'name=t1] & y.
			\arrow[from=f1, to=t1, equal, shorten <= 12pt, shorten >= 10pt]
		\end{tikzcd}
	\]
	In this case, $(f,F)$ is called a \textit{companionship}, $\eta$ is called the \textit{companionship unit}, and $\epsilon$ is called the 
	\textit{companionship counit}.

	Dually, a horizontal arrow $F':y\rightarrow x$ is called the \textit{conjoint} of $f$ 
	when there exist two 2-cells in $\P$
	\[
		\eta' = \begin{tikzcd}
			x\arrow[d, "f"' ] \arrow[r, equal, ""name=f] & x\arrow[d, equal] \\
			y \arrow[r, "F'"'name=t] & x
			\arrow[from=f,to=t,Rightarrow, shorten <= 6pt, shorten >= 6pt]
		\end{tikzcd} 
		\quad 
		\text{and}
		\quad
		\epsilon' = \begin{tikzcd}
			y \arrow[r, "F'"name=f]\arrow[d, equal] & x \arrow[d, "f"] \\
			y \arrow[r, equal, ""'name=t] & y
			\arrow[from=f,to=t,Rightarrow, shorten <= 6pt, shorten >= 6pt]
		\end{tikzcd}
	\] 
	that compose as follows:
	\[
		\begin{tikzcd}
			x\arrow[d, "f"'] \arrow[r, equal, ""'name=h1] & x\arrow[d, equal] \\
			y \arrow[d,equal ]\arrow[r, ""name=h2] & x \arrow[d, "f"] \\
			y \arrow[r, equal, ""'name=h3] & y
			\arrow[from=h1, to=h2, phantom, "\scriptstyle\eta'"]
			\arrow[from=h2, to=h3, phantom, "\scriptstyle\epsilon'"]
		\end{tikzcd}
		\simeq
		\begin{tikzcd}
			x\arrow[r,equal] \arrow[d, "f"'name=t1] & x \arrow[d, "f"name=f1] \\
			y \arrow[r, equal]  & y,
			\arrow[from=f1, to=t1, equal, shorten <= 14pt, shorten >= 14pt]
		\end{tikzcd}
		\quad 
		\begin{tikzcd}
			y \arrow[r, "F'"name=h1]\arrow[d, equal] & x \arrow[d] \arrow[r, equal, ""name=h3] & x\arrow[d, equal] \\
			y \arrow[r, equal, ""'name=h2] & y \arrow[r,"F'"'name=h4] & x
			\arrow[from=h1, to=h2, phantom, "\scriptstyle\epsilon'"]
			\arrow[from=h3, to=h4, phantom, "\scriptstyle\eta'"]
		\end{tikzcd}
		\simeq
		\begin{tikzcd}
			y\arrow[d,equal] \arrow[r, "F'"name=f1] & x \arrow[d,equal] \\
			y \arrow[r, "F'"'name=t1] & x.
			\arrow[from=f1, to=t1, equal, shorten <= 12pt, shorten >= 10pt]
		\end{tikzcd}
	\]
	In this case, $(f,F')$ is called a \textit{conjunction}, $\eta'$ is called the \textit{conjunction unit}, and $\epsilon'$ is called the 
	\textit{conjunction counit}.
\end{definition}

\begin{remark}
	It was shown in \cite{Comp} that the spaces of companionship units and conjunction units for a fixed vertical arrow are either contractible or empty. Therefore, it is justified 
	to speak of \textit{the} companion and \textit{the} conjoint of a vertical arrow.
\end{remark}

\begin{notation}\label{not.comp-conj}
	Suppose that $f:x\rightarrow y$ is a vertical arrow 
	of a double $\infty$-category $\P$. Then the companion and conjoint of  $f$ 
	are denoted by 
	$$
	f_\circledast : x \rightarrow y \quad \text{and} \quad f^\circledast : y \rightarrow x
	$$
	respectively (if they exist).
\end{notation}

\begin{proposition}\label{prop.conj-comp-adjunction}
	Suppose that $f : x \to y$ is a vertical arrow of a double $\infty$-category $\P$ that admits both a companion and a conjoint. Then the pair $(f_\circledast, f^\circledast)$ forms an adjunction 
	in $\Hor(\P)$ so that the associated unit and counit are given by the pasting
	\[
		\eta = \begin{tikzcd}
			x \arrow[r, equal, ""name=h1]\arrow[d, equal] & x \arrow[r, equal, ""name=h3] \arrow[d, "f"] & x \arrow[d,equal] \\
			x \arrow[r, "f_\circledast"name=h2] \arrow[r] & y \arrow[r, "f^\circledast"name=h4] & x 
			\arrow[from=h1,to=h2, Rightarrow, shorten <= 6pt]
			\arrow[from=h3,to=h4, Rightarrow, shorten <= 6pt]
		\end{tikzcd}
		\quad 
		\epsilon = \begin{tikzcd}
			y \arrow[r, "f^\circledast"name=f]\arrow[d, equal] & x \arrow[r, "f_\circledast"name=f2]\arrow[d, "f" ] & y \arrow[d, equal] \\
			y \arrow[r, equal,""'name=t] & y \arrow[r, equal,""'name=t2] & y.
			\arrow[from=f,to=t,Rightarrow, shorten <= 6pt, shorten >= 6pt]
			\arrow[from=f2,to=t2,Rightarrow, shorten <= 6pt, shorten >= 6pt]
		\end{tikzcd} 
	\]
	of the companionship and conjunction units and counits, respectively.
\end{proposition}
\begin{proof}
	This also appeared as \cite[Proposition 4.8]{Comp}, and it is an easy verification using the companionship and conjunction triangle identities.
\end{proof}

\begin{example}\label{ex.compconj-in-dbl-cat-spans}
	Let $\C$ be an $\infty$-category with finite limits. Then every vertical arrow 
	in $\SSPAN(\C)$ admits a companion and conjoint; see \cite[Example 4.6]{Comp}.
\end{example}

\begin{example}\label{ex.compconj-ccat}
	Let $f : \C \rightarrow \D$ be a functor between $\infty$-categories. Then we claim that it has a 
	conjoint in $\infty\CCAT$ that is given by the Grothendieck construction of $f : [1] \rightarrow \infty\Cat$. 
	To show this, one can consider the commutative diagram 
		\[
			\begin{tikzcd}[row sep = 1.8pt]
				\C \arrow[r,equal]\arrow[dd,equal] & \C \arrow[dd, "f"] \arrow[r, "f"]& \D \arrow[dd,equal] && E \arrow[dd, "g"'] \arrow[dr] \\
				&&& \Leftrightarrow & & {[2]} \\
				\C \arrow[r, "f"] & \D \arrow[r,equal] & \D & & E' \arrow[ur]
			\end{tikzcd}
		\]
		of $\infty$-categories on the left. We may view 
		this diagram as an arrow of $\fun([2], \infty\Cat)$, so that 
		this diagram corresponds to a map $g$ of cocartesian fibrations over $[2]$ displayed above on the right. 
	 	We claim that the restriction $F := E \times_{[2]} \{1 \leq 2\} \simeq E' \times_{[2]}  \{0 \leq 1\}$ is the conjoint of $f$ in $\infty\CCAT$ witnessed 
		by the 2-cells 
		\[
			\begin{tikzcd}
			\C \arrow[r,equal,""name=f] \arrow[d, "f"'] & \C  \arrow[d,equal] \\
			\D \arrow[r, "F"'name=t] & \C,
			\arrow[from = f, to = t, phantom, "\scriptstyle{g \times_{[2]}\{0 \leq 1\}}"]
			\end{tikzcd}
			\quad \quad
			\begin{tikzcd}
				\D \arrow[r,"F"name=f] \arrow[d, equal] & \C  \arrow[d,"f"] \\
				\D \arrow[r, equal, ""'name=t] & \D,
				\arrow[from = f, to = t, phantom, "\scriptstyle{g \times_{[2]}\{1 \leq 2\}}"]
			\end{tikzcd}
		\]
		of $\infty\CCAT$.
		The projections $E, E'\to [2]$ are in particular Conduch\'e fibrations (see \cite[Lemma 2.15]{AyalaFrancis} for a proof) so that we have the following computation 
		of the horizontal composition in $\infty\CCAT$ of the candidate counit and unit:
		\[
			\begin{tikzcd}
				\D \arrow[r,"F"name=f1]\arrow[d,equal] & \arrow[r,equal,""name=f2] \C\arrow[d] & \C \arrow[d,equal] \\ 
				\D \arrow[r,equal,""'name=t1] & \D \arrow[r, "F"'name=t2] & \C
				\arrow[from = f1, to = t1, phantom, "\scriptstyle{g \times_{[2]}\{1 \leq 2\}}"]
				\arrow[from = f2, to = t2, phantom, "\scriptstyle{g \times_{[2]}\{0 \leq 1\}}"]
			\end{tikzcd}
			\simeq 
			\begin{tikzcd}
				\D \arrow[r,"F"name=f] \arrow[d, equal] & \C  \arrow[d,equal] \\
				\D \arrow[r, "F"'name=t] & \C.
				\arrow[from = f, to = t, phantom, "\scriptstyle{g \times_{[2]}\{0 \leq 2\}}"]
			\end{tikzcd}
		\]
		By naturality of the Grothendieck construction, $g \times_{[2]} \{0\leq 2\}$ corresponds to the identity morphism $\id_f : f \to f$ of $\fun([1], \infty\Cat)$. 
		Hence, we deduce that $g \times_{[2]} \{0\leq 2\} \simeq \id_F$, as desired.
		The vertical pasting in $\infty\CCAT$ displayed below on the left
		\[
			\begin{tikzcd}
				\C \arrow[r,equal,""name=h1]\arrow[d,"f"'] & \C\arrow[d,equal] && E \times_{[2]} \{0 \leq 1\} \arrow[d] \arrow[dr] &\\
				\D \arrow[r,""'name=h2]\arrow[r,phantom, ""name=h3]\arrow[d,equal] & \C\arrow[d,"f"] & \Leftrightarrow & E' \times_{[2]} \{0 \leq 1\}  \simeq  E \times_{[2]} \{1 \leq 2\} \arrow[d] \arrow[r] & {[1]} \\ 
				\D \arrow[r,equal,""'name=h4] & \D && E \times_{[2]} \{1 \leq 2\} \arrow[ur] &   
				\arrow[from = h3, to = h4, phantom, "\scriptstyle{g \times_{[2]}\{1 \leq 2\}}"]
				\arrow[from = h1, to = h2, phantom, "\scriptstyle{g \times_{[2]}\{0 \leq 1\}}"]
			\end{tikzcd}
		\]
		is classified by the composite in $\infty\Cat_{/[1]}$ on the right. By construction, this must recover the Grothendieck construction 
		of the composite $\id_c \to f \to \id_d$ in $\fun([1], \infty\Cat)$. Thus the left vertical composite is equivalent to 
		the identity 2-cell $\id_f$ on $f$.
		
		A similar argument shows that $f$ admits a companion in $\infty\CCAT$ that classifies the functor $[1]^\op \to \infty\Cat$ corresponding to $f$.
\end{example}

We recall the following from \cite{Comp}:

\begin{definition}
	The \textit{free-living companionship} and \textit{free-living conjunction} double $\infty$-categories are defined by $$\comp := \Sq([1]), \quad \conj := \comp^\hop,$$
	respectively (see \cref{ex.squares}).
\end{definition}

A quick inspection shows that $\comp$ contains a single non-trivial companionship, and $\conj$ contains a single non-trivial conjunction.
It was demonstrated in \cite{Comp} that these are universal examples:

\begin{theorem}\label{thm.free-living-comp}
	Let $\P$ be a double $\infty$-category.
	Then the canonical maps $[1]_v \to \comp$ and $[1]_v \to \conj$ induce monomorphisms
	\begin{gather*}
	\map_{\DbliCat}(\comp, \P) \to \map_{\DbliCat}([1]_v, \P), \\ \map_{\DbliCat}(\conj, \P) \to \map_{\DbliCat}([1]_v, \P),
	\end{gather*}
	whose images are given by the vertical arrows that admit a companion, and the vertical arrows that admit a conjoint, respectively.
	
	Moreover, if $\P$ is locally complete, then the canonical maps $[1]_h \to \comp$ and $[1]_h \to \conj$ induce monomorphisms
	\begin{gather*}
	\map_{\DbliCat}(\comp, \P) \to \map_{\DbliCat}([1]_h, \P), \\ \map_{\DbliCat}(\conj, \P) \to \map_{\DbliCat}([1]_h, \P),
	\end{gather*}
	whose images are given by the horizontal arrows that are a companion, and the horizontal arrows that are a conjoint, respectively.
\end{theorem}
\begin{proof}
	This is precisely the combined content of Theorem 4.13, Corollary 4.15, and Corollary 4.16 of \cite{Comp}.
\end{proof}

\subsection{Cartesian and cocartesian 2-cells} The notion of (co)cartesian 2-cells in strict double categories \cite[Section 4]{ShulmanFramedBicats} readily extends to double $\infty$-categories as follows.

\begin{definition}\label{def.cocart-cells}
	Let $\P$ be a double $\infty$-category.
	A 2-cell $\alpha$ in $\P$ is called \textit{(co)cartesian} if it corresponds to a (co)cartesian arrow 
	of the source-target projection 
	$$
	(d_1^*,d_0^*) : \P_1 \rightarrow \P_{0}^{\times 2}.
	$$
\end{definition}

\begin{remark}\label{rem.cart-cocart-duality}
	Note that a 2-cell $\alpha$ in $\P$ is cocartesian if and only if it the associated 2-cell $\alpha^\vop$ is cartesian in $\P^\vop$.
\end{remark}

\begin{example}\label{ex.ccat-cart-lifts}
	Suppose that $\P = \infty\CCAT$. Then the source-target projection of $\P$ is a cartesian fibration on account of \cref{prop.equiv-cor-bifib-prof}. 
	Let $F : \C \rightarrow \D$ be a horizontal arrow of $\P$. Suppose that $f : X \rightarrow \C$ and $g : Y \rightarrow \D$ are functors. 
	Then the cartesian restriction $(f,g)^*F$ of $F$ along $(f,g)$ with respect to $\P_1 \rightarrow \P_0^{\times 2}$ is described as follows:
	\begin{enumerate}
		\item If $F$ corresponds to a two-sided discrete fibration $E \rightarrow \C \times \D$, then $(f,g)^*F$ is classified by the base change  
		$$
		E \times_{(\C \times \D)} (X\times Y) \rightarrow X\times Y.
		$$
		\item If $F$ corresponds to a profunctor $P : \D^\op \times \C \rightarrow \S$, then $(f,g)^*F$ is classified by the restricted profunctor 
		$$
		Y^\op \times X \xrightarrow{g^\op \times f} \D^\op \times \C \xrightarrow{P} \S.
		$$
	\end{enumerate}
	To verify (1), one uses that the inclusion $\DFib \subset \fun([1] \cup_{\{0\}} [1], \infty\Cat)$ of two-sided discrete fibrations 
	into the $\infty$-category of spans of $\infty$-categories is fully faithful, and two-sided discrete fibrations are closed under base change. 
		Observation (2) follows directly from the fact that $\prof \rightarrow \infty\Cat^{\times 2}$ classifies the functor $(\C, \D) \mapsto \fun(\D^\op \times \C, \S)$.
\end{example}

There is also another perspective on these (co)cartesian 2-cells that is often useful in practice. To this end, we make some preparatory definitions.

\begin{notation}
	We will make use of the sub double category $N\subset [1,1]$, called the \textit{free-living niche}, that may be pictured as 
	\[ 
		 N= \begin{tikzcd}[column sep = small, row sep =small]
			(0,0) \arrow[d] & (1,0)\arrow[d] \\
			(0,1) \arrow[r] & (1,1),
		\end{tikzcd}
	\] and fits in the pushout square
	\[ 
	\begin{tikzcd}[column sep = large]
		\{0,1\}_h \times [0]_v \arrow[d, "{\{0,1\}_h \times d_0}"']\arrow[r]& {[1,0]}\arrow[d] \\
		\{0,1\}_h \times [1]_v \arrow[r] & N
	\end{tikzcd}
	\]
	of double $\infty$-categories. 
	We define also make use of the double category $\vrectangle$ that is defined by the pushout square 
	\[ 
	\begin{tikzcd}[column sep = large]
		\{0,1\}_h \times [1]_v \arrow[d, "{\{0,1\}_h \times d_1}"']\arrow[r] & {[1,1]}\arrow[d] \\
		\{0,1\}_h \times [2]_v \arrow[r] & \vrectangle
	\end{tikzcd}
	\]
	of double $\infty$-categories.  In what follows, the commutative diagram
	\[
		\begin{tikzcd}
			N \arrow[d] \arrow[r] & {[1,1]} \arrow[d, "{[1,d_0]}"] \\
			\vrectangle \arrow[r] & {[1,2]}
		\end{tikzcd}
	\]
	of inclusions will play a role.
\end{notation}

\begin{lemma}\label{lem.cart-niche-fillers}
	Let \[
	\alpha  = \begin{tikzcd} 
		a \arrow[r,"F"name=f]\arrow[d,"f"'] & b \arrow[d, "g"] \\ 
		c \arrow[r, "G"name=t] & d
		\arrow[from=f,to=t, Rightarrow, shorten <= 6pt]
	\end{tikzcd}
	\] be a 2-cell of a double $\infty$-category $\P$. Then the following are equivalent: 
	\begin{enumerate}[noitemsep]
		\item the top map in the commutative square
		\[
		\begin{tikzcd}
			\map_{\DbliCat}([1,2], \P) \arrow[r]\arrow[d] & \map_{\DbliCat}(\vrectangle, \P) \arrow[d] \\
			\map_{\DbliCat}([1,1], \P) \arrow[r] & \map_{\DbliCat}(N, \P)
		\end{tikzcd}
		\]
		of spaces
		induces an equivalence when restricting to the fibers above $\alpha$ and $\alpha|N$,
		\item $\alpha$ is cartesian.
	\end{enumerate}
\end{lemma}
\begin{proof}
	By definition, $\alpha$ classifies a cartesian edge of the source-target projection if and only if the induced square 
	\[
		\begin{tikzcd}
			\P_{1/\alpha} \arrow[r]\arrow[d] & \P_{1/F} \arrow[d] \\
			\P_{0/f} \times \P_{0/g} \arrow[r] & \P_{0/a} \times \P_{0/b} 
		\end{tikzcd}
	\]
	is a pullback square of $\infty$-categories. But note that the comparison map 
	$$
	\P_{1/\alpha}\rightarrow \P_{1/F} \times_{(\P_{0/a} \times \P_{0/b} )} (\P_{0/f} \times \P_{0/g} )
	$$
	is a map between right fibrations over $\P_1$. Hence, it suffices to check that the above square is a pullback square on 
	underlying spaces, i.e.\ after applying $\map_{\infty\Cat}([0], -)$. 
	By definition of slice $\infty$-categories, this square on underlying spaces may be obtained from the commutative diagram
	\[
		\begin{tikzcd}
			\map_{\DbliCat}([1]_h \times [2]_v, \P) \arrow[r]\arrow[d] &\map_{\DbliCat}([1]_h \times [1]_v, \P)  \arrow[d] \\
			\map_{\DbliCat}(\{0,1\}_h, \times [2]_v, \P) \arrow[r] & \map_{\DbliCat}(\{0,1\}_h, \times [1]_v, \P)
		\end{tikzcd}
	\]
	by taking appropriate fibers in each corner above the restrictions of $\alpha$. This now readily translates to condition (1).
\end{proof}

\begin{remark}\label{rem.cart-fact}
	Let $\P$ be a double $\infty$-category.
	Suppose that $\alpha$ is a 2-cell as in \cref{lem.cart-niche-fillers}, and suppose that $\alpha$ is cartesian.
	In the picture below, if we start with a 2-cell as on the left, then there exists a unique factorization through $\alpha$ as on the right: 
	\[
	\begin{tikzcd} 
		x \arrow[d,"h"'] \arrow[r,"H"name=h1] & y \arrow[d,"k"] \\
		a \arrow[d,"f"'] & b \arrow[d, "g"] \\ 
		c \arrow[r, "G"name=h2] & d
		\arrow[from=h1,to=h2,Rightarrow, shorten <= 6pt]
	\end{tikzcd}
	\simeq 
	\begin{tikzcd} 
		x \arrow[d,"h"'] \arrow[r,"H"name=h1] & y \arrow[d,"k"] \\
		a \arrow[r,"F"name=h2]\arrow[d,"f"'] & b \arrow[d, "g"] \\ 
		c \arrow[r, "G"'name=h3] & d.
		\arrow[from=h1,to=h2, Rightarrow, dashed, shorten <= 6pt]
		\arrow[from=h2,to=h3,phantom,"\scriptstyle \alpha"]
	\end{tikzcd}
	\]
	Dually, if $\alpha$ is \textit{cocartesian}, then we have unique factorizations as pictured below: 
	\[
	\begin{tikzcd} 
		a \arrow[d,"f"'] \arrow[r,"H"name=h1] & b \arrow[d,"g"] \\
		c \arrow[d,"f"'] & d \arrow[d, "g"] \\ 
		x \arrow[r, "G"name=h2] & y
		\arrow[from=h1,to=h2,Rightarrow, shorten <= 6pt]
	\end{tikzcd}
	\simeq 
	\begin{tikzcd} 
		a \arrow[d,"f"'] \arrow[r,"F"name=h1] & b \arrow[d,"g"] \\
		c \arrow[r,"G"'name=h2]\arrow[d,"h"'] & d \arrow[d, "k"] \\ 
		x \arrow[r, "H"'name=h3] & y.
		\arrow[from=h2,to=h3, Rightarrow, dashed, shorten >= 5pt]
		\arrow[from=h1,to=h2,phantom,"\scriptstyle \alpha"]
	\end{tikzcd}
	\]
\end{remark}

The notions of (co)cartesian 2-cells, companions, and conjoints are closely related:

\begin{theorem}\label{th.comp-conj-cart}
	Let $f : a \rightarrow x$ and $g : b \rightarrow y$ be arrows of a double $\infty$-category $\P$. Suppose that 
	$f$ admits a companion and $g$ admits a conjoint. If $F : x\rightarrow y$ is a horizontal arrow 
	of $\P$, then the pasted 2-cell
	\[
		\begin{tikzcd}
			a \arrow[r, "{f_\circledast}"name=f1]\arrow[d,"f"'] & x \arrow[r,	"F"name=f2]\arrow[d,equal] & y \arrow[r, "{g^\circledast}"name=f3] \arrow[d, equal] & b\arrow[d, "g"] \\
			x \arrow[r, equal, ""name=t1] & x \arrow[r, "F"name=t2] & y \arrow[r,equal, ""name=t3] & y,
			\arrow[from=f1, to=t1, Rightarrow, shorten <= 6pt]
			\arrow[from=f2, to=t2, equal, shorten <= 6pt]
			\arrow[from=f3, to=t3, Rightarrow, shorten <= 6pt]
		\end{tikzcd}
	\]
	is a cartesian 2-cell. Here, the left and right 2-cells are the companionship and conjunction counits respectively.
	Dually, if $G : b\rightarrow a$ is a horizontal arrow of $\P$, then the pasting
	\[
		\begin{tikzcd}
			b \arrow[r, equal, ""name=f1]\arrow[d,"g"'] & b\arrow[r,	"G"name=f2]\arrow[d,equal] & a \arrow[r, equal, ""name=f3] \arrow[d, equal] & a\arrow[d, "f"] \\
			y \arrow[r, "g^\circledast"name=t1] & b \arrow[r, "G"name=t2] & a \arrow[r,"f_\circledast"name=t3] & x,
			\arrow[from=f1, to=t1, Rightarrow, shorten <= 6pt]
			\arrow[from=f2, to=t2, equal, shorten <= 6pt]
			\arrow[from=f3, to=t3, Rightarrow, shorten <= 6pt]
		\end{tikzcd}
	\]
	is a cocartesian 2-cell. Here, the left and right 2-cells are now the conjunction and companionship units.
\end{theorem}

We will 
postpone the proof for now. Let us first discuss the following implication:

\begin{corollary}\label{cor.compconj-roof-niches}
	Suppose that $\P$ is a double $\infty$-category. 
	Let $f : x \rightarrow y$ be a vertical arrow in $\P$. Then a 2-cell of the form
	\[
	\alpha = \begin{tikzcd}
		x\arrow[d,"f"']\arrow[r,"F"name=f] & y \arrow[d,equal] \\ 
		y \arrow[r,equal,""name=t] & y,
		\arrow[from=f,to=t,Rightarrow, shorten <= 6pt]
	\end{tikzcd} 
	\]
	is cartesian if and only if it is a companionship counit. Dually, a 2-cell of the form 
	\[
		\beta = \begin{tikzcd}
			y\arrow[d,equal]\arrow[r,"F'"name=f] & x \arrow[d, "f"] \\ 
			y \arrow[r,equal,""name=t] & y,
			\arrow[from=f,to=t,Rightarrow, shorten <= 6pt]
		\end{tikzcd} 
	\]
	is cartesian if and only if it is a conjunction counit. 
\end{corollary}
\begin{proof}
	We will just handle the assertion for $\alpha$; the second assertion is shown similarly. If $\alpha$ is a companionship counit, then \cref{th.comp-conj-cart} implies that $\alpha$ is cartesian. 
	Conversely, suppose that $\alpha$ is cartesian. Then one can obtain the candidate companionship unit $\eta$ by factoring the vertical identity 2-cell of $f$ through $\alpha$ (see \cref{rem.cart-fact}).
	It remains to verify that the horizontal composite $\gamma$ of the 2-cells $\eta$ and $\alpha$ is the horizontal identity on $F$. But, by the universal property of cartesian 2-cells, 
	it suffices to check that the vertical composite of $\gamma$ and $\alpha$ recovers $\alpha$, which is true by construction.
\end{proof}

\begin{example}\label{ex.compconj-ccat-prof-tdfib}
	We may compute companions in $\infty\CCAT$ from the perspective 
	of two-sided discrete fibrations and profunctors using \cref{cor.compconj-roof-niches} as follows. Let $f : \C \rightarrow \D$ be a functor. 
	In light of \cref{ex.ccat-hor-identities} and  \cref{ex.ccat-cart-lifts}, we conclude that the companion of $f$ is classified by the profunctor 
	$$
	\map_\D(-, f(-)) : \D^\op \times \C \xrightarrow{\id \times f} \D^\op \times \D \xrightarrow{\map_\D(-,-)}\S,
	$$
	and classified by the two-sided discrete fibration 
	$$
		\C \times_{\fun(\{1\}, \D)} \fun([1], \D)  \rightarrow \C \times \D.
	$$
	There are similar formulas for conjoints.
\end{example}

\begin{proof}[Proof of \cref{th.comp-conj-cart}]
	The demonstration is very similar to the proof of \cite[Theorem 4.22]{Comp}. 
	It suffices to show the first assertion since the second assertion can be obtained by applying 
	$(-)^{\mathrm{vop}}$; see also \cref{rem.cart-cocart-duality}. We will write $\alpha$ for the pasted
	2-cell that appears in the first assertion throughout the proof. 
	
	We will make use of the \textit{thickened niche} $N'$ that is defined by the pushout square
	\[
		\begin{tikzcd}
			{[1]_v} \sqcup {[1]_v} \arrow[d]\arrow[r] & \comp \sqcup \conj \arrow[d] \\
			N \arrow[r] & N'
		\end{tikzcd}
	\]
	of double $\infty$-categories. The double $\infty$-category $\P$ receives maps from $\comp$ and $\conj$ 
	that extend the maps that select the vertical arrows $f$ and $g$ respectively; this follows from \cref{thm.free-living-comp}. Together with the restriction $\alpha|N$, 
	this provides a canonical map 
	$
	\alpha' : N' \rightarrow \P.
	$
	By construction, the 2-cell $\alpha$ can be written as a composite 
	\[
		[1,1] \xrightarrow{[\{0\leq 3\},1]} [3,1] \xrightarrow{\beta} N' \rightarrow \P,
	\]
	where the map $\beta$ selects the following compatible 2-cells in $N'$: the image of the free companionship counit 
	under $\comp \rightarrow N'$, the identity 2-cell on the non-trivial horizontal arrow in the image of $N \rightarrow N'$, and 
	the image of the free conjunction counit under $\conj \rightarrow N'$.  
	Let us consider the map 
	$$
	i : \vrectangle' := N' \cup_{N} \vrectangle \rightarrow N' \cup_{[1,1]} [1,2] =: [1,2]'
	$$
	of double $\infty$-categories; the pushouts are computed in $\DbliCat$. 
	Then we obtain a commutative square 
	\[
	\begin{tikzcd}
		\map([1,2]', \P) \times_{\map(N',\P)} \{\alpha'\} \arrow[d]\arrow[r] & \map([1,2], \P) \times_{\map([1,1], \P)} \{\alpha\} \arrow[d] \\ 
		\map(\vrectangle, \P) \times_{\map(N',\P)} \{\alpha'\} \arrow[r] & \map(\vrectangle,\P)\times_{\map(N,\P)} \{\alpha|N\},
	\end{tikzcd}
	\]
	and the horizontal maps are equivalences on account of the pasting lemma for pullback squares.
	It thus suffices to show that $i$
	is an equivalence.

	We will exhibit an explicit inverse to $i$, analogous as in the demonstration of \cite[Theorem 4.22]{Comp}.
	 We will make use of the double $\infty$-category  $P$ that may be pictured as
	$$
	P := \begin{tikzcd}[column sep = tiny, row sep = tiny]
		(0,0) \arrow[r,""name=a1]\arrow[d] & (1,0) \arrow[r,""name=b1]\arrow[d] & (2,0) \arrow[r,""name=c1] \arrow[d] & (3,0) \arrow[d] \\
		(0,1)\arrow[d]  \arrow[r,""name=a2] & (1,1)\arrow[d]  & (2,1)\arrow[d]  \arrow[r, ""name=c2] & (3,1)\arrow[d]  \\
		(0,2) \arrow[r,""name=a3]\arrow[d] & (1,2) \arrow[r,""name=b2]\arrow[d] & (2,2) \arrow[r,""name=c3] \arrow[d] & (3,2) \arrow[d] \\
		(0,3) \arrow[r,""name=a4] & (1,3) \arrow[r, ""name=b3] & (2,3) \arrow[r,""name=c4]  & (3,3)
		\arrow[from=a1,to=a2, Rightarrow, shorten <= 5pt]
		\arrow[from=a2,to=a3, Rightarrow, shorten <= 5pt]
		\arrow[from=a3,to=a4, Rightarrow, shorten <= 5pt]
		\arrow[from=b1,to=b2, Rightarrow, shorten <= 5pt]
		\arrow[from=b2,to=b3, Rightarrow, shorten <= 5pt]
		\arrow[from=c1,to=c2, Rightarrow, shorten <= 5pt]
		\arrow[from=c2,to=c3, Rightarrow, shorten <= 5pt]
		\arrow[from=c3,to=c4, Rightarrow, shorten <= 5pt]
	\end{tikzcd}
	\subset [3,3].
	$$ 
	Here, all squares are filled with 2-cells. In the terminology of \cite{Pasting}, note that $P$ can be obtained as the nerve of the similarly described \textit{composable 2-dimensional pasting shape}.
	As a double $\infty$-category, $P$ can be written as the colimit of all the 2-cells, with possibly subdivided boundaries, that appear in $P$ and that cannot be further subdivided into other non-trivial 2-cells. This follows from 
	the pasting theorem \cite[Theorem A]{Pasting}. One may use this colimit description to construct a map $p : P \rightarrow \vrectangle'$ that corresponds to the diagram
	\[
		\begin{tikzcd}[column sep = tiny, row sep = tiny]
			(0,0) \arrow[r,equal]\arrow[d,""name=f1] & (0,0) \arrow[r,""name=b1]\arrow[d, ""'name=t1] & (1,0) \arrow[r, equal] \arrow[d, ""name=f2] & (1,0) \arrow[d, ""'name=t2] \\
			(0,1)\arrow[d,equal]  \arrow[r,equal,""name=a2] & (0,1)\arrow[d]  & (1,1)\arrow[d]  \arrow[r, equal,""name=c2] & (1,1)\arrow[d,equal]  \\
			(0,1) \arrow[r, ""name=a3]\arrow[d] & (0,2) \arrow[r, ""name=f3]\arrow[d,equal] & (1,2) \arrow[r,""name=c3] \arrow[d, equal] & (1,1) \arrow[d] \\
			(0,2) \arrow[r,equal, ""name=a4] & (0,2) \arrow[r, ""'name=t3] & (1,2) \arrow[r,equal,""name=c4]  & (1,2),
			\arrow[from=f1,to=t1,equal, shorten <=12pt, shorten >= 12pt]
			\arrow[from=f2,to=t2,equal, shorten <=12pt, shorten >= 12pt]
			\arrow[from=f3,to=t3,equal, shorten <=9pt, shorten >= 9pt]
			\arrow[from=a2,to=a3, Rightarrow, shorten <= 5pt]
			\arrow[from=a3,to=a4, Rightarrow, shorten <= 5pt]
			\arrow[from=c2,to=c3, Rightarrow, shorten <= 5pt]
			\arrow[from=c3,to=c4, Rightarrow, shorten <= 5pt]
			\arrow[from=b1,to=f3, Rightarrow, shorten <= 5pt]
		\end{tikzcd}
	\]
	so that the non-identity 2-cells that appear on the bottom left (resp.\ right) are the companionship (resp.\ conjunction) unit and counit. The non-identity 2-cell that appears in the middle is given by (the image of) $\vrectangle$.  
	Note that $P$ admits an inclusion 
	$[1,2] \rightarrow P$ that factors the inclusion $t : [1,1] \rightarrow P$ of the outer 2-cell. 
	Restricting $p$ along this inclusion, we obtain a functor $$q  : [1,2] \rightarrow \vrectangle'$$
	with the following properties:
	\begin{itemize}
		\item the restriction $q|[1,1]$ is given by the composite $[1,1] \xrightarrow{[\{0\leq 3\},1]} [3,1] \xrightarrow{\beta} N'$,
		\item the restriction $q|\vrectangle$ is equivalent to the canonical inclusion $\vrectangle \rightarrow \vrectangle'$,
		\item the composite $iq$ is equivalent to the canonical inclusion $[1,2] \rightarrow [1,2]'$.
	\end{itemize}
	These properties may be verified using the companionship and conjunction identities (cf.\ 
	the proof of \cite[Theorem 4.1]{ShulmanFramedBicats}) similarly to the verification in \cite[Theorem 4.22]{Comp}.
	Consequently, the map $q$ gives rise to a functor
	$$
	r : [1,2]' \rightarrow \vrectangle'
	$$
	via the universal property of the pushout which is an inverse to $i$. 
\end{proof}

\subsection{Characterizations of \texorpdfstring{$\infty$}{∞}-equipments} We are now ready to give the promised definition of our $\infty$-cate\-go\-rical proarrow equipments. To this end, 
we highlight the following consequence (cf.\ \cite[Theorem 4.1]{ShulmanFramedBicats}):

\begin{corollary}\label{cor.chars-equipments}
	Let $\P$ be a double $\infty$-category. Then the following assertions are equivalent: 
	\begin{enumerate}[noitemsep]
		\item every vertical arrow in $\P$ admits both a conjoint and a companion,
		\item the source-target projection $\P_1 \rightarrow \P_0^{\times 2}$ is a cartesian fibration,
		\item the source-target projection $\P_1 \rightarrow \P_0^{\times 2}$ is a cocartesian fibration.
	\end{enumerate}
	\end{corollary}
\begin{proof}
	One may use \cref{th.comp-conj-cart} to go from (1) to (2) and (3). On account of \cref{cor.compconj-roof-niches}, (2) in turn implies (1). Note that there is a dual form of \cref{cor.compconj-roof-niches} involving cocartesian 2-cells, and companionship and conjunction units, 
	which shows that (3) implies (1).
\end{proof}
	
	\begin{definition}\label{def.equipment}
		An \textit{$\infty$-equipment} is a locally complete double $\infty$-category that meets 
		the equivalent conditions of \cref{cor.chars-equipments}.
		We will write $\infty\Equip \subset \DbliCat$ for the full subcategory spanned by the $\infty$-equipments.
	\end{definition}
	
	\begin{proposition}\label{prop.dbl-fun-pres-cart}
		A functor between $\infty$-equipments preserves (co)car\-tesian cells.
	\end{proposition}
	\begin{proof}
		This follows directly from the description of (co)cartesian cells of \cref{th.comp-conj-cart}, and the observation 
		that functors between double $\infty$-catego\-ries preserve companionships and conjunctions.
	\end{proof}
	
	\begin{proposition}\label{prop.equips-reflective}
		The subcategory $\infty\Equip \subset \DbliCat$ is a reflective subcategory. In particular, 
		$\infty$-equipments are closed under limits.
	\end{proposition}
	\begin{proof}
		The full subcategory $\DbliCat^{lc}$ of $\DbliCat$ spanned by the locally complete ones is reflective; see \cite[Section 3.4]{Comp}. 
		Hence, it suffices to show that 
		$\infty\Equip$ is a reflective subcategory of $\DbliCat^{lc}$. In turn, this follows from \cref{thm.free-living-comp}.
		Namely, the
		$\infty$-equipments are precisely the locally complete double $\infty$-categories that are local with respect to the inclusions 
		$[1]_v \rightarrow \comp$ and $[1]_v \rightarrow \conj$.
	\end{proof}

	It follows from \cref{prop.int-ccat-loc-comp} and \cref{ex.compconj-ccat} that $\infty\CCAT$ is an $\infty$-equipment. More generally, the following is true:
	
	\begin{proposition}\label{prop.int-ccat-equip}
		Let $\E$ be a complete $\infty$-category with universal geometric realizations. Then the  double $\infty$-category $\CCAT(\E)$ 
		is an $\infty$-equipment.
	\end{proposition}
	\begin{proof}
		It follows from \cref{prop.int-ccat-loc-comp} that $\CCAT(\E)$ is locally complete. 
		We will show that its source-target projection of $\CCAT(\E)$ is a cartesian fibration. Note that we have a pullback square 
		\[
			\begin{tikzcd}
				\CCAT(\E)_1 \arrow[r]\arrow[d] & {\Delta_{/[1]}}\Seg(\E) \arrow[d] \\
				\Cat(\E)^{\times 2} \arrow[r] & \Seg(\E)^{\times 2},
			\end{tikzcd}
		\]
		where the vertical arrows are the source-target projections of the double $\infty$-categories $\CCAT(\E)$ and $\SSEG(\E)$.  We will show that the right arrow is a cartesian fibration. 
		To this end, we note that the full subcategory $\Seg(\E) \subset \fun(\Delta^\op, \E)$ is closed under all limits.
		 Hence, it suffices to show that the functor
		$\Delta_{/[1]}\Seg(\E) \rightarrow \Seg(\E)^{\times 2}$ admits a fully faithful right adjoint.\footnote{It is a general fact that any functor $p : E \rightarrow B$ with a fully faithful right adjoint and a target $B$ with all pullbacks is automatically a cartesian fibration, see \cite[Lemma 4.4.6]{GepnerHaugseng}.}
		By construction, this functor arises as the restriction of the functor 
		$$
		(d_0^*, d_1^*) : \fun((\Delta_{/[1]})^\op, \E) \rightarrow \fun(\Delta^\op, \E)^{\times 2},
		$$
	which admits a fully faithful right adjoint $(-) \star (-)$ given by right Kan extension along $ \Delta^\op \sqcup \Delta^\op \rightarrow (\Delta_{/[1]})^\op$, so that 
	$$
	(X \star Y)(f) = X(f^{-1}(1)) \times Y(f^{-1}(0))
	$$
	for $f : [n] \rightarrow [1]$ and $X, Y \in \fun(\Delta^\op, \E)$.
	One may readily verify that $X \star Y$ is a Segal $[1]$-module if $X$ and $Y$ are Segal objects in $\E$. Hence, $(-) \star (-)$ restricts 
	to the desired fully faithful right adjoint.
	\end{proof}
	
	\begin{example}
		The double $\infty$-categories of spans (see \cref{example.dbl-cat-spans}) are examples of $\infty$-equipments.
		This can be deduced from \cref{ex.compconj-in-dbl-cat-spans}. 
		Although we do not demonstrate this here,
		one may also show that the Morita double $\infty$-categories of \cref{example.dbl-cat-morita} are $\infty$-equipments.
	\end{example}

	\begin{convention}\label{conv.terminology-arrows}
		Throughout this article, we will use the following terminology for $\infty$-equipments:
		\begin{itemize}
		\item the vertical arrows of an $\infty$-equipment will be called \textit{arrows},
		\item the horizontal arrows of an $\infty$-equipment will be called \textit{proarrows}.
		\end{itemize} 
	\end{convention}
	
	\begin{remark}\label{rem.proarrow-interchange}
		Let $\P$ be an $\infty$-equipment. Suppose that we have arrows $f : a \rightarrow c$, $g : b \rightarrow d$,
		and proarrows $F : a \rightarrow b$, $G : c \rightarrow d$ in $\P$. Then it follows from \cref{prop.adjoints-mapping-cats} and \cref{prop.conj-comp-adjunction} that we have an equivalence 
		$$
		\map_{\Hor(\P)(c,d)}(g_\circledast F f^\circledast, G) \simeq \map_{\Hor(\P)(a,b)}(F, g^\circledast Gf_\circledast)
		$$
		that is natural in $F$ and $G$. This interchanging property is very useful and prominent in the proarrow calculus that we will see in \cref{section.formal-category-theory}.
	\end{remark}

\subsection{Abstract Yoneda embeddings}

To develop a formal category theory inside an $\infty$-equipment in \cref{section.formal-category-theory}, we will need an abstract form of the Yoneda embedding internal to an $\infty$-equipment. The way 
we would like to proceed is by using \textit{the universal property of the squares construction}.

Let $\C$ be an $(\infty,2)$-category. One readily verifies that its squares construction $\Sq(\C)$ 
has all companions. In fact, every horizontal arrow of $\Sq(\C)$ is a companion of a vertical arrow. 
The universal property of squares asserts that $\Sq(\C)$ is the smallest double $\infty$-category that is obtained from $\C_v$ by adjoining 
companions to the vertical arrows of $\C_v$. There is also dual variant, where $\C_v$ is replaced by the horizontal inclusion $\C_h$ of $\C$. 
This was made precise in \cite[Theorem 3.14]{GRconj} and \cite[Corollary 3.11]{GRconj}, where \cref{thm.free-living-comp} was generalized as follows:

\begin{theorem}[Universal property of squares \cite{GRconj}]\label{thm.uni-prop-sq}
	Let $\P$ be a double $\infty$-category. Then the canonical map $\C_v \to \Sq(\C)$ induces  a monomorphism 
	$$
	\map_{\DbliCat}(\Sq(\C), \P) \to \map_{\DbliCat}(\C_v, \P) \simeq \map_{\Theta_2\Seg}(\C, \Vert(\P))
	$$
	whose image is given by the functors $\C \to \Vert(\P)$ that carry every morphism in $\C$ to a vertical arrow of $\P$ that admits a companion.

	Moreover, suppose that $\P$ is a locally complete double $\infty$-category. Then the canonical map $\C_h \to \Sq(\C)$ induces  a monomorphism 
	$$
	\map_{\DbliCat}(\Sq(\C), \P) \to \map_{\DbliCat}(\C_h, \P) \simeq \map_{(\infty,2)\Cat}(\C, \Hor(\P))
	$$
	whose image is given by the functors $\C \to \Hor(\P)$ that carry every morphism in $\C$ to a horizontal arrow of $\P$ that is a companion.
\end{theorem}

We can now show the following:

\begin{proposition}\label{prop.sq-ext-lff}
	Let $\C$ be an $(\infty,2)$-category, and $\P$ be a locally complete 
	double $\infty$-category. For a functor $f : \Sq(\C) \rightarrow \P$, the following two assertions are equivalent:
	\begin{enumerate}
		\item the functor $\Vert(f) : \C \to \Vert(\P)$ is locally fully faithful,
		\item the functor $\Hor(f) : \C \to \Hor(\P)$ is locally fully faithful.
	\end{enumerate}
\end{proposition}
\begin{proof}
	This entails showing that
	$f$ is left orthogonal to the morphism $[1]_h^{\sqcup 2} \rightarrow [1;1]_h$ if and only if 
	$f$ is left orthogonal to the morphism $[1]_v^{\sqcup 2} \rightarrow [1;1]_v$. For any of the two directions 
	$d \in \{h,v\}$, we obtain a commutative square
	\[
		\begin{tikzcd}[column sep = tiny]
			\map(\Sq([1;1]), \Sq(\C))\arrow[d] \arrow[r] &  \map(\Sq([1])^{\sqcup 2}, \Sq(\C)) \times_{\map(\Sq([1])^{\sqcup 2}, \P)} \map(\Sq([1;1]), \P)  \arrow[d]\\
			\map([1;1]_d, \Sq(\C)) \arrow[r] &  \map([1]_d^{\sqcup 2}, \Sq(\C)) \times_{\map([1]_d^{\sqcup 2}, \P)} \map([1;1]_d, \P).
		\end{tikzcd}
	\]
	The desired result follows if we show that the two vertical arrows are equivalences. For the left vertical arrow, this follows directly from 
	\cref{thm.uni-prop-sq}: every vertical arrow of $\Sq(\C)$ has a companion, and every horizontal arrow of $\Sq(\C)$ is a companion. The right vertical arrow factors as a composite:
	\[
		\begin{tikzcd}[row sep = small]
		\map(\Sq([1])^{\sqcup 2}, \Sq(\C)) \times_{\map(\Sq([1])^{\sqcup 2}, \P)} \map(\Sq([1;1]), \P) \arrow[d] \\ 
		\map(\Sq([1])^{\sqcup 2}, \Sq(\C)) \times_{\map([1]_d^{\sqcup 2}, \P)} \map(\Sq([1;1]), \P) \arrow[d] \\
		\map(\Sq([1])^{\sqcup 2}, \Sq(\C)) \times_{\map([1]_d^{\sqcup 2}, \P)} \map([1;1]_d, \P) \arrow[d] \\
		\map([1]_d^{\sqcup 2}, \Sq(\C)) \times_{\map([1]_d^{\sqcup 2}, \P)} \map([1;1]_d, \P).
		\end{tikzcd}
	\]
	In light of \cref{thm.uni-prop-sq}, each arrow in this factorization is a monomorphism. The bottom arrow is an equivalence for similar reasons as detailed above.
	We claim that the middle arrow is an equivalence as well. Indeed, it is an epimorphism (i.e.\ surjective on $\pi_0$) for the following two reasons: 
	\begin{itemize}
	\item the restriction map $\map(\Sq([1;1]), \P) \rightarrow \map([1;1]_d, \P)$ is a monomorphism 
	onto the subspace of maps $[1;1]_d \rightarrow \P$ so that both non-identity arrows of $[1;1]_d$ are carried to an arrow in $\P$ 
	that has/is a companion, 
	\item $f$ preserves arrows that are/have a companion.
	\end{itemize}
	The top arrow is an epimorphism for similar reasons as well.
\end{proof}

We are now ready to construct an abstract form of the Yoneda embedding for $\infty$-equipments using \cref{thm.uni-prop-sq}.

\begin{construction}[Companion embedding]\label{con.companion-embedding}
	Let $\P$ be a locally complete double $\infty$-category with all companions (e.g.\ an $\infty$-equipment). On account of 
	\cref{thm.uni-prop-sq}, there exists a unique dotted extension in the diagram below 
	\[
		\begin{tikzcd}
			\Vert(\P)_v \arrow[r] \arrow[d] & \P, \\
			\Sq(\Vert(\P)) \arrow[ur, dotted, "f"']
		\end{tikzcd}
	\] 
	where the top arrow is given by the canonical inclusion. 
	The functor on horizontal fragments
	$$
	\iota_\P : \Vert(\P) = \Hor(\Sq(\Vert(\P))) \xrightarrow{\Hor(f)} \Hor(\P) 
	$$
	is locally fully faithful on account of \cref{prop.sq-ext-lff}. It acts as the identity on objects, and carries arrows in $\Vert(\P)$ 
	to their associated companions in $\Hor(\P)$. This will be called the \textit{companion embedding for $\P$} throughout this article.
\end{construction}

\begin{example}
	The locally fully faithful companion embedding $\infty\CAT \rightarrow \Hor(\infty\CCAT)$ can  locally be described as follows. 
	Suppose that $\C$ and $\D$ are $\infty$-categories. From 
	the perspective of profunctors, the induced fully faithful functor between mapping $\infty$-categories
	$$
	\fun(\C,\D) \rightarrow \Hor(\infty\CCAT)(\C, \D) \simeq \fun(\D^\op \times \C, \S)
	$$
	carries a functor $f : \C \rightarrow \D$ to the profunctor $\map_\D(-, f(-))$, see \cref{ex.compconj-ccat-prof-tdfib}. In particular, we recover the Yoneda embedding if $\C = \ast$.
\end{example}

\begin{construction}[Conjoint embedding]\label{con.conjoint-embedding}
	There is also a dual construction.
	Let $\P$ be a locally complete double $\infty$-category that admits all conjoints (e.g.\ an $\infty$-equipment). Then $\P^\hop$ admits 
	all companions (see \cite[Remark 4.3]{Comp}), so that we obtain a companion embedding 
	$
	\iota_{\P^\hop} : \Vert(\P)^{\opt} \simeq \Vert(\P^\hop) \rightarrow \Hor(\P^\hop) \simeq \Hor(\P)^{\opo}.
	$
	Here we used the identifications of \cite[Remark 3.19]{Comp}. This functor is adjunct to a functor 
	$$
	\gamma_\P : \Vert(\P) \rightarrow \Hor(\P)^{1,\opt}.
	$$
	Note that it is locally fully faithful, and acts as the identity on objects and carries arrows to their conjoints.
	We call $\gamma_\P$ the \textit{conjoint embedding} for $\P$.
\end{construction}

\section{First notions of formal category theory}\label{section.formal-category-theory}

This last section is devoted to developing the basic notions of the abstract category theory internal to an $\infty$-equipment. The central insight is that one may encode and phrase
desirable properties in the internal category theory via proarrows, and pass from vertical to horizontal fragments via the companion and conjoint embeddings. For convenience, we will fix an $\infty$-equipment $\P$ throughout this section.

\subsection{Injections}  \label{ssection.injections}
We will start by defining an abstract notion of fully faithfulness.

\begin{definition}\label{def.ff-arrows}
	An arrow $f : x \rightarrow y$ in $\P$ is called an \textit{injection} if the vertical identity 2-cell
	\[ 
		\id_f = \begin{tikzcd}
			x\arrow[d,"f"'name=f1] \arrow[r, equal] & x \arrow[d,"f"name=t1] \\
			y \arrow[r,equal] & y
			\arrow[from=f1, to=t1, equal, shorten <= 12pt, shorten >= 12pt]
		\end{tikzcd}
	\]
	is cartesian.
\end{definition}

\begin{remark}\label{rem.injection}
	Let $f : x \to y$ be an arrow in $\P$. Then one readily verfies using the companionship and conjunction triangle identities 
	that we have the following decomposition of the identity 2-cell on $f$: 
	\[
		\id_f \simeq
		\begin{tikzcd}
			x \arrow[r, equal,""name=h1]\arrow[d, equal] & x \arrow[r, equal, ""name=h4] \arrow[d, "f"] & x \arrow[d,equal] \\
			x \arrow[r, "f_\circledast"name=h2] \arrow[r]\arrow[d,"f"'] & y\arrow[d,equal] \arrow[r, "f^\circledast"name=h5] & x\arrow[d,"f"] \\
			y \arrow[r, equal, ""name=h3] & y \arrow[r, equal, ""name=h6] & y,
			\arrow[from=h1,to=h2, Rightarrow, shorten <= 6pt]
			\arrow[from=h2,to=h3, Rightarrow, shorten <= 6pt]
			\arrow[from=h4,to=h5, Rightarrow, shorten <= 6pt]
			\arrow[from=h5,to=h6, Rightarrow, shorten <= 6pt]
		\end{tikzcd}
	\]
	where the 2-cells in the decomposition are companionship/conjunction units and counits.  
	The composite of the bottom row is a cartesian 2-cell by \cref{th.comp-conj-cart}. Hence by \cref{rem.cart-fact}, $f$ is an injection if and only if the composite of the top row defines an equivalence $\id_x \to f^\circledast f_\circledast$ in $\Hor(\P)(x,x)$.
\end{remark}

\begin{example}
	If $\P = \infty\CCAT$, then the injections are precisely the fully faithful functors. To wit, suppose that $f : \C \rightarrow \D$ is a functor between 
	$\infty$-categories. Then  the identity 2-cell $\id_f$ factors uniquely as the vertical composite
	\[
		\id_f \simeq \begin{tikzcd}
			\C\arrow[d,equal] \arrow[r, equal, ""name=h1] & \C \arrow[d,equal] \\
			\C\arrow[d,"f"'] \arrow[r, "{\cart{f}{}{f}}"name=h2] & \C \arrow[d,"f"] \\
			\D \arrow[r,equal,""name=h3] & \D
			\arrow[from=h1, to=h2, Rightarrow, shorten <= 6pt]
			\arrow[from=h2, to=h3, Rightarrow, shorten <= 6pt]
		\end{tikzcd}
	\]
	so that the bottom 2-cell is cartesian (and the top 2-cell coincides with the composite of the top row of \cref{rem.injection}). From the perspective of profunctors,
	it follows from \cref{ex.ccat-cart-lifts} that the top comparison 2-cell is given by the arrow 
	$\map_\C(-,-) \rightarrow \map_\D(f(-),f(-))$ in $\fun(\C^\op \times \C, \S)$ that is induced by $f$. Note that 
	$f$ is an injection if and only if this comparison 2-cell is an equivalence, and this is precisely the case when $f$ is a fully faithful functor.
\end{example}

\begin{example}
	 Let $f : \C \rightarrow \D$ be a functor of $\infty$-categories 
	internal to a complete $\infty$-category $\E$ with universal geometric realizations. Then one readily verifies using the description 
	of cartesian cells of \cref{prop.int-ccat-equip} that $f$ is injection in $\CCAT(\E)$ if and only if the square 
	\[
		\begin{tikzcd}
			(s_0^*\C)(g) \arrow[r, "(s_0^*f)(g)"] \arrow[d] & (s_0^*\D)(g) \arrow[d] \\
			(\C \star \C)(g) \arrow[r, "(f \star f)(g)"] & (\D \star \D)(g)
		\end{tikzcd}
	\]
	is a pullback square in $\E$, for all $g : [n] \rightarrow [1]$. Note that this square may be identified with the commutative square
	$$
	\begin{tikzcd}
		\C([n]) \arrow[r] \arrow[d] & \D([n]) \arrow[d] \\
		\C(g^{-1}(0)) \times \C(g^{-1}(1)) \arrow[r] & \D(g^{-1}(0)) \times \D(g^{-1}(1)). 
	\end{tikzcd}
	$$
	In light of the Segal condition, it is sufficient to check for $n=1$. If $g$ is constant, then the above square is trivially a pullback square. If $g = \id$, then it is precisely 
	the condition for the induced square
	\[
		\begin{tikzcd}
			\C([1]) \arrow[r ] \arrow[d] & \D([1]) \arrow[d] \\
			\C([0])^{\times 2}\arrow[r] & \D([0])^{\times 2}
		\end{tikzcd}
	\]
	to be a pullback square, where the vertical arrows are given by source-target maps. Hence $f$ is an injection if the above square is a pullback square.
\end{example}

Every injective arrow is \textit{corepresentably fully faithful} in the vertical fragment:

\begin{proposition}\label{prop.corep-ff}
	Let $f : x \rightarrow y$ be an injection in $\P$. Then the post-composition functor
	$$
	f \circ (-) : \Vert(\P)(z,x) \rightarrow \Vert(\P)(z,y)
	$$
	is fully faithful for all $z \in \P$.
\end{proposition}
\begin{proof}
The companion embedding for $\P$ (see \cref{con.companion-embedding}) gives rise to a commutative square
\[
	\begin{tikzcd}
		\Vert(\P)(z,x) \arrow[r, "f \circ (-)"]\arrow[d, "(\iota_\P)_{z, x}"'] & \Vert(\P)(z,y)\arrow[d, "(\iota_\P)_{z,y}"] \\
		\Hor(\P)(z,x) \arrow[r, "f_\circledast \circ (-)"] & \Hor(\P)(z,y),
	\end{tikzcd}
\]
so that the vertical arrows are fully faithful. Hence, it suffices to show that the bottom functor is fully faithful.
In view of \cref{prop.adjoints-mapping-cats} and \cref{prop.conj-comp-adjunction}, this bottom functor is left adjoint to 
$f^\circledast \circ (-)$. Consequently, it suffices to show that the unit for this adjunction is an equivalence. 
The associated unit is induced by the horizontal 2-cell $\eta$ that appears in the proof of \cref{prop.conj-comp-adjunction}, 
and $\eta$ is an equivalence in $\Hor(\P)(x,x)$ if and only if $f$ is injective by \cref{rem.injection}.
\end{proof}

\subsection{Adjunctions} \label{ssection.fct-adjunctions} We can give a different characterization of the adjunctions in $\Vert(\P)$ 
using its ambient $\infty$-equipment. 

\begin{proposition}\label{prop.comp-conj-char-adjunction}
	Suppose 
	that  $u : x \rightarrow y$ and $v : y \rightarrow x$ are arrows in $\P$. Then the following assertions are equivalent:
	\begin{enumerate}
		\item $(u,v)$ is an adjoint pair of arrows in $\Vert(\P)$,
		\item there exists an equivalence $u^\circledast \simeq v_\circledast$ in $\Hor(\P)(y,x)$.
	\end{enumerate}
\end{proposition}
\begin{proof}
	The companion embedding $\iota_\P : \Vert(\P) \rightarrow \Hor(\P)$ is a locally fully faithful functor. 
	Consequently, \cref{prop.lff-adjoints} asserts that  $(u,v)$ is an adjoint pair 
	in $\Vert(\P)$ if and only if $(u_\circledast, v_\circledast)$ is an adjoint pair in $\Hor(\P)$. We have shown in \cref{prop.conj-comp-adjunction} that
	the pair  $(u_\circledast, u^\circledast )$ is always 
	an adjoint pair in $\Hor(\P)$. Thus the desired result follows directly from \cref{prop.uniqueness-adjoints}.
\end{proof}

\begin{example}\label{ex.hom-char}
	Let us consider the case that $\P = \infty\CCAT$, and view its proarrows as profunctors between $\infty$-categories. Then \cref{prop.comp-conj-char-adjunction} recovers the fact that
	two functors $u : \C \rightarrow \D$ and $v: \D \rightarrow \C$ form an adjoint pair $(u,v)$ if and only if 
	there exists an equivalence 
	$
	\map_\D(u(-), -) \simeq \map_\C(-, v(-))
	$
	in $\fun(\C^\op \times \D , \S)$; see \cref{ex.compconj-ccat-prof-tdfib}. 
\end{example}

\begin{corollary}\label{cor.comp-conj-char-adjunction}
	An arrow $f : x\rightarrow y$ in $\P$ admits a right adjoint in $\Vert(\P)$ if and only if $f^\circledast$ is a companion. Dually, $f$ admits a left adjoint in $\Vert(\P)$ if and only if 
	$f_\circledast$ is a conjoint.
\end{corollary}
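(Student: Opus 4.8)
The plan is to read this corollary off directly from \ref{prop.comp-conj-char-adjunction}, which has already done the substantive work. First I would unwind the meaning of ``admits a right adjoint'': by definition, $f : x \rightarrow y$ has a right adjoint in $\Vert(\P)$ precisely when there exists an arrow $g : y \rightarrow x$ such that $(f,g)$ forms an adjoint pair in the locally complete 2-fold Segal space $\Vert(\P)$.

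Next I would invoke \ref{prop.comp-conj-char-adjunction} to rephrase this condition: $(f,g)$ is an adjoint pair if and only if there is an equivalence $\cart{f}{y}{} \simeq \cart{}{x}{g}$ in $\Hor(\P)(y,x)$. The crucial point is now purely a matter of recognizing $\cart{}{x}{g}$ as the companion of $g$. Thus $f$ admits a right adjoint if and only if there exists some arrow $g : y \rightarrow x$ whose companion is equivalent to the conjoint $\cart{f}{y}{}$; and by \ref{thm.free-companion}, which identifies the companions among all proarrows as exactly those equivalent to companions of vertical arrows, this is precisely the assertion that $\cart{f}{y}{}$ is a companion.

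For the dual statement about left adjoints, I would run the same argument after applying an opposite. Passing to $\P^{\vop}$ interchanges the roles of companions and conjoints, so the claim that $g : y \rightarrow x$ admits a left adjoint if and only if $\cart{}{x}{g}$ is a conjoint follows from the first part together with the dual characterizations of conjoints recorded after \ref{thm.sq-comp-vert-equiv}.

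The main thing to get right is the bookkeeping of variances rather than any genuine obstacle: one must check that the conjoint $\cart{f}{y}{}$ of $f : x \rightarrow y$ and the companion $\cart{}{x}{g}$ of a candidate adjoint $g : y \rightarrow x$ are both proarrows $y \rightarrow x$, so that they live in the same mapping $\infty$-category $\Hor(\P)(y,x)$ and the equivalence supplied by \ref{prop.comp-conj-char-adjunction} typechecks. The only input beyond this is that being a companion is a property invariant under equivalence of proarrows, which is exactly the essential uniqueness guaranteed by \ref{thm.free-companion}.
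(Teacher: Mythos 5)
Your proposal is correct and is exactly the argument the paper intends: the corollary is stated without proof precisely because it follows from \ref{prop.comp-conj-char-adjunction} by existentially quantifying over the adjoint $g$ and using that being a companion (resp.\ conjoint) is a property of a proarrow invariant under equivalence in $\Hor(\P)(y,x)$. Your variance bookkeeping checks out, and the only cosmetic remark is that the dual statement also follows directly from the symmetric reading of \ref{prop.comp-conj-char-adjunction} without passing to $\P^{\vop}$, and the equivalence-invariance step is automatic and does not really need the full strength of \ref{thm.free-companion}.
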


\begin{remark}
	If we consider $\P = \infty\CCAT$ and view proarrows as correspondences, this recovers the fact that 
	a functor $f : \C \rightarrow \D$  admits a right adjoint if and only if the cocartesian fibration that classifies 
	$f$ is also a cartesian fibration; see \cref{ex.compconj-ccat}. This is the definition of adjunctions between $\infty$-categories that is used by Lurie \cite[Definition 5.2.2.1]{HTT}.
\end{remark}

\begin{proposition}\label{prop.ff-adjoints}
	Suppose that 
	$
	u : x \rightleftarrows y : v
	$
	is an adjunction in $\Vert(\P)$. Then $u$ is an injection if and only if the unit  $\eta$ of this adjunction is an equivalence in $\Vert(\P)(x,x)$. Dually, 
	$v$ is an injection if and only if the counit $\epsilon$ is an equivalence in $\Vert(\P)(y,y)$.
\end{proposition}
\begin{proof}
	We only demonstrate the first assertion as the statement for counits is shown similarly.
	The companion embedding induces a fully faithful functor 
	$(\iota_\P)_{x,x}:\Vert(\P)(x,x) \rightarrow \Hor(\P)(x,x)$. Hence, $\eta$ is an equivalence if and only if its image $\eta'$ under the companion embedding is an equivalence.
	Note that $\eta'$ can be identified with the unit of the adjunction $(u_\circledast, u^\circledast)$ by \cref{prop.comp-conj-char-adjunction}.
	On account of \cref{prop.conj-comp-adjunction}, the image $\eta'$ is given by the pasting
	\[
		\begin{tikzcd}
			x \arrow[r, equal, ""name=h1]\arrow[d, equal] & x \arrow[r, equal, ""name=h3] \arrow[d, "u"] & x \arrow[d,equal] \\
			x \arrow[r, "u_\circledast"name=h2] \arrow[r] & y \arrow[r, "u^\circledast"name=h4] & x 
			\arrow[from=h1,to=h2, Rightarrow, shorten <= 6pt]
			\arrow[from=h3,to=h4, Rightarrow, shorten <= 6pt]
		\end{tikzcd}
	\]
	of the appropriate companionship and conjunction units. Thus the desired conclusion follows from \cref{rem.injection}.
\end{proof}

\subsection{Weighted colimits and limits}\label{ssection.weighted-colims} We will now demonstrate 
that $\infty$-equipments support internal notions of \textit{weighted} colimits and limits natively.
Weighted colimits and limits have roots in enriched category theory, where they are used as the suitable generalization of non-enriched colimits and limits \cite{BorceuxKelly}. 
In the context of $\infty$-categories, there are two 
different approaches to weighted colimits and limits by Gepner--Haugseng--Nikolaus \cite{GepnerHaugsengNikolaus}, and Rovelli \cite{Rovelli}. It has been 
shown that the two approaches agree by Haugseng  \cite[Corollary 4.5]{HaugsengEnds}. Hinich has recently introduced a notion of weighted colimits and limits for enriched $\infty$-categories as well \cite{Hinich}. 

\begin{definition}\label{def.proarrow-cones}
Suppose that $f : i \rightarrow x$ is an arrow in $\P$. Then we single out the following special proarrows:
\begin{itemize}[noitemsep]
	\item If $W : j \rightarrow i$ is a proarrow of $\P$, then a proarrow $C : x \rightarrow j$ is called a \textit{proarrow of $W$-weighted cones under $f$} 
	if it represents the presheaf 
$$
\map_{\Hor(\P)(x,i)}(W \circ (-), f^\circledast) : \Hor(\P)(x,j)^\op \rightarrow \S.
$$
	\item If $W : i \rightarrow j$ is a proarrow of $\P$, then a proarrow $C : j \rightarrow x$ is called a \textit{proarrow of $W$-weighted cones over $f$} if it represents the presheaf 
$$
\map_{\Hor(\P)(i,x)}((-) \circ W, f_\circledast) : \Hor(\P)(j, x)^\op \rightarrow \S.
$$
\end{itemize}
In this context, the proarrows $W$ that appear above are also called \textit{weights}.
\end{definition}

\begin{remark}\label{rem.proarrow-cones}
	Let $W : j \to i$ be a proarrow of $\P$. A proarrow $C$ of $W$-weighted cones under $f$ defines a universal horizontal 2-cell of shape
	\[ 
		\begin{tikzcd}
			x \arrow[d,equal] \arrow[r, dashed, "C"] & |[alias=f]| j \arrow[r, "W"] & i \arrow[d,equal] \\
			x \arrow[rr, "f^\circledast"name=t] && i
			\arrow[from=f,to=t,Rightarrow]
		\end{tikzcd}
	\]
	in $\P$. In 2-categorical language, the proarrow $C$ is an example of a \textit{lift} \cite[Section 1]{StreetWalters}. There is a dual picture for proarrows of weighted cones \textit{over} $f$.
\end{remark}

\begin{remark}
	If $\P$ is horizontally closed (see \cref{def.hor-closed}), then it admits all proarrows of cones. For instance, 
	in the context of \cref{rem.proarrow-cones}, the proarrow $C$ is obtained by applying the right adjoint of $W \circ (-) : \Hor(\P)(x,j) \to \Hor(\P)(x,i)$ 
	to the conjoint $f^\circledast$.
\end{remark}

\begin{example}\label{ex.ccat-cones}
Suppose that $\P = \infty\CCAT$ and let $f : I \rightarrow \C$ be a functor. We may consider the \textit{coconical weight} $W : [0] \rightarrow I$, which has the defining property 
that it is the conjoint of the functor $I \rightarrow [0]$. From the viewpoint of profunctors, 
$W$ is given by the constant presheaf $W : I^\op \rightarrow \S : i \mapsto \ast.$
In light of \cref{cor.ccat-hor-closed} and \cref{ex.ccat-cart-lifts}, the proarrow of $W$-weighted cones under $f$  is computed as
$$
\C \rightarrow \S : c \mapsto \int_{i \in I} \map_\C(f(i), c) \simeq \map_{\fun(I,\C)}(f, \Delta c),
$$
where  $\Delta c : I \rightarrow \C$ denotes the diagram that is constant at $c$. We used \cite[Proposition 5.1]{GepnerHaugsengNikolaus} to obtain the second equivalence.
\end{example}

\begin{definition}\label{def.weighted-colims}
	Let $f : i \rightarrow x$ be an arrow in $\P$. Suppose that $W : j \to i$ is a proarrow. The $g : j \rightarrow x$ 
	is called the \textit{$W$-weighted colimit of $f$} if
	$g^\circledast$ is the proarrow of $W$-weighted cones under $f$.

	Dually, consider a proarrow $W : i \to j$. Then an arrow $g : j\to x$ is called the \textit{$W$-weighted limit of $f$} if 
	the companion $g_\circledast$ is the proarrow of $W$-weighted cones over $f$.
\end{definition}

\begin{example}\label{ex.ccat-colims}
	Suppose that $\P = \infty\CCAT$. Let $f : I \rightarrow \C$ be a functor between $\infty$-categories. In light of \cref{ex.ccat-cones}, 
	we conclude that the colimit of $f$ weighted by the coconical weight $W : [0] \rightarrow I$, coincides with the usual colimit of $f$. 
\end{example}

\begin{example}\label{ex.ccat-weighted-colimits}
	Suppose that $W : J \rightarrow I$ is any weight. Viewed as a profunctor, 
	such a weight is given by a presheaf $W : I^\op \times J \rightarrow \S$. From this perspective, the proarrow $C$ of $W$-weighted cones under $f$ is computed by an end
	$$
	C : J^\op \times \C \rightarrow \S : c \mapsto \int_{i \in I} \map_\S(W(i,j), \map_\C(f(i), c))
	$$
	on account of \cref{cor.ccat-hor-closed}.
	A functor $g : J \to \C$ is the $W$-weighted colimit of $f$ if and only if $C$ is equivalent to $g^\circledast : J^\op \times \C \to \S : (j,c) \mapsto \map_\C(g(j), c)$. This coincides with the notion of weighted colimits of \cite{GepnerHaugsengNikolaus} (see also \cite[Section 4]{HaugsengEnds}) in the case that $J=[0]$.

	Suppose for now that $J = [0]$. From the perspective of two-sided discrete fibrations, $W$ is classified by a right fibration $p : I' \rightarrow I$. 
On account of \cref{rem.ccat-hor-closed-bifib}, the left fibration
of $W$-weighted cones under $f$ fits in the diagram 
\[
	\begin{tikzcd}
	E \arrow[r]\arrow[d]& \fun(I', f^\circledast) \arrow[r]\arrow[d] & \fun(I \times [1], \C) \arrow[d, "{(\ev_1, \ev_0)}"] \\ 
	\C \arrow[r] & \fun(I', \C) \times \fun(I', I) \arrow[r, "\id \times f_*"] & \fun(I,\C) \times \fun(I,\C),
	\end{tikzcd}
\]
where both squares are pullback squares. The left bottom functor is given by the product of the diagonal functor 
and  
the constant map that selects $p$. Hence, we conclude that $C$ is classified by the left fibration
	$$
	\C_{fp/} \rightarrow \C.
	$$
	Hence, an object $x \in \C$ is the $W$-weighted colimit of $f$ if and only if it is the colimit of the diagram $fp : I' \rightarrow \C$.
	In view of \cite[Theorem 2.37]{Rovelli}, this coincides with Rovelli's definition of weighted colimits. In particular, we recover \cite[Theorem 1.2]{HaugsengEnds} and the fact that the definitions of weighted colimits by Rovelli and Gepner--Haugseng--Nikolaus agree \cite[Corollary 4.5]{HaugsengEnds}.
\end{example}

Using some proarrow calculus, we may verify that adjunctions have the desired preservation properties of weighted colimits and limits. 

\begin{proposition}\label{prop.cones-adjunctions}
Suppose that we have an adjunction 
$u : x \rightleftarrows y : v$
in $\Vert(\P)$. Then the following is true:
\begin{enumerate}
	\item If $C : x\rightarrow j$ is a proarrow of cones under an arrow $f : i \rightarrow x$ weighted by a proarrow $W : j \rightarrow i$, then $Cu^\circledast$ is 
	a  proarrow of $W$-weighted cones under $uf$.
	\item If $C : j \rightarrow y$ is a proarrow of cones over an arrow $f : i \rightarrow y$ weighted by a proarrow $W : i \rightarrow j$, 
	then $v_\circledast C$ is a proarrow
	of $W$-weighted cones over $vf$.
\end{enumerate}
In particular, $u$ preserves weighted colimits, and $v$ preserves weighted limits.
\end{proposition}
\begin{proof}
We will only handle the case of colimits, the other case is shown similarly. Since $u$ is left adjoint to $v$, \cref{prop.comp-conj-char-adjunction}
implies that $u^\circledast \simeq v_\circledast$. Let $F : y \rightarrow j$ be a proarrow. Using the definition of proarrows of cones 
and  \cref{rem.proarrow-interchange}, we compute that
\begin{align*}
\map_{\Hor(\P)(y,j)}(F, Cu^\circledast) & \simeq \map_{\Hor(\P)(x,j)}(Fv^\circledast, C) \\
&\simeq \map_{\Hor(\P)(x,i)}(WFv^\circledast, f^\circledast) \\
&\simeq \map_{\Hor(\P)(y,i)}(WF, f^\circledast v_\circledast) \\
&\simeq \map_{\Hor(\P)(y,i)}(WF, f^\circledast u^\circledast) \\
&\simeq \map_{\Hor(\P)(y,i)}(WF, (uf)^\circledast),
\end{align*}
and these equivalences are natural. 
For the latter assertion, suppose that $g : j \to x$ is the $W$-weighted colimit of $f$, i.e.\ suppose that $g^\circledast = C$. Then $(ug)^\circledast \simeq Cu^\circledast$ so that $ug$ is the $W$-weighted colimit of $uf$ by the above.
\end{proof}
\begin{construction}\label{con.colim-functor}
	Suppose that $W : j \rightarrow i$ is a weight in $\P$. Let $x \in \P$. We will write $$A \subset \Vert(\P)(i,x)$$ for the full subcategory spanned by those arrows $f : i \rightarrow x$ 
	that admit a $W$-weighted colimit. Consider the composite functor
	\begin{align*}
	C : \Vert(\P)(i,x)^\op \xrightarrow{(\gamma_\P)_{i,x}} \Hor(\P)(x,i) &\xrightarrow{\phantom{(W \circ (-) )^*}} \cat{PSh}(\Hor(\P)(x,i)) \\
	&\xrightarrow{(W \circ (-) )^*} \cat{PSh}(\Hor(\P)(x,j)),
	\end{align*}
	where the first arrow comes from the conjoint embedding (see \cref{con.conjoint-embedding}), and the second arrow is the Yoneda embedding.
	It carries an arrow $f : i \rightarrow x$ to the presheaf 
	$
	C(f) = \map_{\Hor(\P)(x,i)}(W \circ (-) , f^\circledast),
	$
	whose representative (if it exists) would be called a $W$-weighted cone under $f$.
	Let us consider the fully faithful functor
	$$
	i : \Vert(\P)(j,x)^\op \xrightarrow{(\gamma_\P)_{j,x}} \Hor(\P)(x,j) \rightarrow \PSh(\Hor(\P)(x,j)),
	$$
	where the second arrow is the Yoneda embedding. By assumption, the essential image of the restricted functor $C|A^\op$ is contained in the essential image of $i$. 
	Consequently, $C$ factors through $i$. After applying $(-)^\op$ to this factorization, we obtain a functor
	$$
	{\colim}^W: A \rightarrow \Vert(\P)(j,x),
	$$
	that carries an arrow $f \in A$ to its $W$-weighted colimit so that
	$$
	\map_{\Vert(\P)(j,x)}({\colim}^W f, g) \simeq \map_{\Hor(\P)(x,i)}(W g^\circledast , f^\circledast),
	$$
	naturally in $g \in \Vert(\P)(j,x)$.
\end{construction}

\begin{example}\label{ex.ccat-weighted-colimits-2}
	Consider the $\infty$-equipment $\widehat{\infty\CCAT}$ of large $\infty$-categories (see \cref{rem.ccat-size}). The proarrows that are contained in $\infty\CCAT \subset \widehat{\infty\CCAT}$ will be called 
	\textit{small proarrows}.
	If $\C$ is a cocomplete $\infty$-category, then it admits all colimits 
	weighted by these small proarrows. To wit, suppose that $f : I \rightarrow \C$ is a functor, and let $W : J \rightarrow I$ be a small proarrow. 
	We will view $W$ as a profunctor $W : I^\op \times J \rightarrow \S$. Note that $I$ and $J$ are small. Since $\C$ is cocomplete, 
	it is tensored over spaces. I.e.\ there exists a functor 
	$
	- \times - : \S \times \C \rightarrow \C 
	$
	so that $$\map_\C(K \times x, y) \simeq \map_\S(K, \map_\C(x,y)),$$ naturally for all $x,y \in \C$ and (small) spaces $K$. Now, one readily verifies 
	that the $W$-weighted colimit $g : J \rightarrow \C$ of $f$ exists, and is given by the coend 
	$$
	g(j) = \int^{i \in I}W(i,j) \times f(i)
	$$ 
	on account of \cref{ex.ccat-weighted-colimits}.
	Conversely, \cref{ex.ccat-colims} implies that any $\infty$-category that admits all colimit weighted by small proarrows, is cocomplete. There is an analogous result for complete $\infty$-categories and weighted limits.
\end{example}

\subsection{Point-wise Kan extensions}\label{ssection.pke}
We introduce the following familiar terminology for colimits weighted by conjoints and limits weighted by companions:

\begin{definition}\label{def.kan-extensions}
	Let $f : i \rightarrow x$ and $w : i \rightarrow j$ be arrows of $\P$. 
	Then the $w^\circledast$-weighted colimit of $f$ is called the \textit{left Kan extension} 
	of $f$ along $w$. We say that a lax triangle
	\[
		\begin{tikzcd}
			i \arrow[dr,"f"name=f] \arrow[d, "w"'] \\
			|[alias=t]|j \arrow[r, "g"'] \arrow[r] & x
			\arrow[from=f,to=t, Rightarrow, shorten <= 6pt, "\eta"]
		\end{tikzcd}
	\]
	in $\Vert(\P)$ \textit{exhibits $g$ as the left Kan extension of $f$} 
	if the map 
	$$
	\map_{\Hor(\P)(x,j)}(-, g^\circledast) \to \map_{\Hor(\P)(x,i)}(w^\circledast(-), f^\circledast)
	$$
	that is induced by the image $w^\circledast g^\circledast \to f^\circledast$ of $\eta$ under the conjoint embedding, is an equivalence. In this case, 
	$\eta$ is called the \textit{unit} of the left Kan extension.
		
 Dually, the $w_\circledast$-weighted limit of $f$ is called the \textit{right Kan extension} of $f$ along $w$. A triangle
 \[
	\begin{tikzcd}
		|[alias=t]|i \arrow[r,"f"] \arrow[d, "w"'] & x \\
		j \arrow[ur, "g"'name=f]
		\arrow[from=f,to=t, Rightarrow, shorten <= 6pt, "\epsilon"]
	\end{tikzcd}
 \]
 in $\Vert(\P)$ is said to \textit{exhibit $g$ as the right Kan extension of $f$} 
 if the map 
 $$
 \map_{\Hor(\P)(j,x)}(-, g_\circledast) \to \map_{\Hor(\P)(i,x)}((-)w_\circledast, f_\circledast)
 $$
 that is induced by the image $g_\circledast w_\circledast \to f_\circledast$ of $\epsilon$ under the companion embedding, is an equivalence. In this case, 
 $\epsilon$ is called the \textit{unit} of the left Kan extension.
\end{definition}

We will focus on the study of left Kan extensions throughout this section, but all dual results for 
right Kan extensions hold as well and are demonstrated similarly.

\begin{example}
	Suppose that $\P = \infty\CCAT$. Let $f : I \to \C$ and $w : I \to J$ be functors. Unraveling the definitions, 
	the left Kan extension in the sense of \cref{def.kan-extensions} exists if and only if the proarrow $C : \C \to J$ of  
	$w^\circledast$-weighted cones under $f$ is a conjoint. We will show how this can be studied by viewing the proarrows of $\infty\CCAT$ as two-sided discrete fibrations. 
	
	As will be explained in \cref{ex.ccat-strongly-pointed}, 
	$C$ is a conjoint if and only if $j^\circledast C$ is a conjoint for every object $j : [0] \to J$. 
	A simple computation using \cref{rem.proarrow-interchange} shows that $j^\circledast C$ is the proarrow of cones under $f$ weighted 
	by $w^\circledast j_\circledast : [0] \to I$. Using \cref{ex.ccat-cart-lifts}, one readily computes 
	that $w^\circledast j_\circledast$ is classified by the right fibration $I/j := I \times_J J/j \to J$. 
	Thus \cref{ex.ccat-weighted-colimits} implies that $j^\circledast C$ is a conjoint if and only if 
	the colimit of $I/j \to I \to \C$ exists. We conclude that the left Kan extension $g$ of $f$ along $w$ exists (in the sense of \cref{def.kan-extensions})
if and only if the colimits $I/j \to I \to \C$ exist for every $j \in J$. In this case, the above shows that $g$ is computed by $g(j) = \colim(I/j \to I \to \C)$.
	We thus recover the usual notion of left Kan extensions for $\infty$-categories.

A similar strategy to compute left Kan extensions may be performed in any $\infty$-equip\-ment with a suitable notion of fibrations. This is 
discussed in the sequel paper \cite[Section 7]{EquipII}.

	Taking the viewpoint of profunctors, the left Kan extension $g$ (if it exists) of $f$ along $w$ is computed by the familiar coend formula 
	$$
	g(j) = \int^{i \in I} \map_J(w(i), j) \times f(i),
	$$
	as explained in \cref{ex.ccat-weighted-colimits-2}.
\end{example}

\begin{proposition}\label{cor.unit-pke-ff}
	Suppose that $w : i \rightarrow j$ is an injection in $\P$. Then the unit 2-cells
	associated to left Kan extension along $w$ are invertible.
\end{proposition}
\begin{proof}
	Suppose that $g : j \rightarrow x$ is the right Kan extension of an arrow $f : i \rightarrow x$ along $w$. 
	Then the unit 2-cell $\eta : f \rightarrow gw$ is invertible if and only if its image $\bar{\eta}$ under the fully faithful functor 
	$(\gamma_\P)_{i,x} : \Vert(\P)(i,x) \rightarrow \Hor(\P)(x,i)^\op$ is invertible. 
	Let us consider the following commutative diagram
	\[
	\begin{tikzcd}
		\map_{\Hor(\P)(x,j)}(w_\circledast(-), g^\circledast)\arrow[d, "w^\circledast"']\arrow[dr]\\ 
		 \map_{\Hor(\P)(x,i)}(w^\circledast w_\circledast(-), w^\circledast g^\circledast) \arrow[r]\arrow[d] & \map_{\Hor(\P)(x,i)}(w^\circledast w_\circledast(-), f^\circledast) \arrow[d] \\
		 \map_{\Hor(\P)(x,i)}(-, w^\circledast g^\circledast) \arrow[r] & \map_{\Hor(\P)(x,i)}(-, f^\circledast).
	\end{tikzcd}
	\]
	Here, the vertical arrows in the bottom square are induced by composing with the canonical 2-cell $\id_i \rightarrow w^\circledast w_\circledast$, 
	and hence equivalences (see \cref{rem.injection}).
	The horizontal arrows are induced by composition with $\bar{\eta}$. The total composite of the two left vertical arrows are an equivalence on account of \cref{prop.conj-comp-adjunction},
	and  the drawn diagonal 
	arrow must be an equivalence as well as $\eta$ exhibits a Kan extension. It follows that the bottom horizontal arrow is an equivalence by 2-out-of-3. Thus $\bar{\eta}$ is an equivalence.
\end{proof}

\begin{notation}\label{not.external-pke}
	Let $x \in \P$ and $w : i \rightarrow j$ be an arrow of $\P$. Then we can apply the construction of \cref{con.colim-functor} to the specific case that $W = w^\circledast$. In this case, 
	$A \subset \Vert(\P)(i,x)$ is the full subcategory spanned by the arrows that admit a left Kan extension along $w$. We will write 
	$$
	w_! := {\colim}^{w^\circledast} : A \rightarrow \Vert(\P)(j,x)
	$$
	for the functor that carries each $f \in A$ to its left Kan extension along $w$.
\end{notation}

\begin{proposition}\label{prop.external-pke}
	In the context of \cref{not.external-pke}, the functor $w_!$ is a partial left adjoint to the restriction functor 
	$$
	w^* : \Vert(\P)(j,x)\rightarrow \Vert(\P)(i,x).
	$$
	Moreover, the unit of this partial adjunction 
	is component-wise given by the 2-cells that exhibit left Kan extensions.
\end{proposition}
\begin{proof}
	There is a natural equivalence 
	$$
	\map_{\Vert(\P)(j,x)}(w_!f, g) \simeq \map_{\Hor(\P)(x,i)}(w^\circledast g^\circledast , f^\circledast)
	$$
	for $f\in A$ and $g \in \Vert(\P)(j,x)$ by construction. The conjoint embedding gives rise to a commutative square 
	\[
		\begin{tikzcd}
			\Vert(\P)(j,x) \arrow[r, "(-) \circ w"] \arrow[d] & \Vert(\P)(i,x) \arrow[d] \\
			\Hor(\P)(x,j)^\op \arrow[r, "w^\circledast \circ (-)"] & \Hor(\P)(x,i)^\op,
		\end{tikzcd}
	\]
	where the vertical arrows are fully faithful. Hence, we deduce that
	$$
	\map_{\Vert(\P)(j,x)}(w_!f, g) \simeq \map_{\Vert(\P)(i,x)}(f, gw)
	$$
	naturally, as desired.
	
	To identify the unit
	$
	f \rightarrow (w_!f)w
	$
	at $f\in A$,
	we compute its image under the conjoint embedding 
	$
	\eta_f : w^\circledast (w_!f)^\circledast \simeq ((w_!f)w)^\circledast \rightarrow f^\circledast,
	$
	which is obtained as the image of the identity $w_!f \rightarrow w_!f$ under the natural equivalence above. In light of \cref{con.colim-functor}, 
	$\eta_f$ may be computed as the image of the horizontal identity 2-cell $(w_!f)^\circledast \rightarrow (w_!f)^\circledast$ under the component 
	of the identification
	$$
	\map_{\Hor(\P)(x,j)}(-, (w_!f)^\circledast) \simeq \map_{\Hor(\P)(x,i)}(w^\circledast (-), f^\circledast)
	$$
	at $(w_!f)^\circledast$. By construction, 
	this is precisely the image of the 2-cell under the conjoint embedding that exhibits the left Kan extension.
\end{proof}

\begin{remark}\label{rem.wke}
	Any $(\infty,2)$-category $\C$ supports a notion of extensions. Namely, if $f : i \rightarrow x$ and $w : i \rightarrow j$ are 
	arrows of  $\C$, then the \textit{left extension} of $f$ along $w$ 
	is the arrow (if it exists) that represents the presheaf 
	$$
	\map_{\C(i,x)}(f, w^*(-)) : \C(j,x) \rightarrow  \S.
	$$
	In light of \cref{prop.external-pke}, left Kan extensions in $\P$ are left extensions in the $(\infty,2)$-category $\Vert(\P)$. However, 
	the reverse implication fails in general \cite{CampionCounterEx}.
\end{remark}

\subsection{Exact squares}\label{ssection.exact-squares}
We may now study so-called \textit{Beck-Chevalley conditions} for point-wise Kan extensions. 
This about the following situation. Suppose that 
we have a lax square 
\[
	\begin{tikzcd}
		i \arrow[r, "p"]\arrow[d,"w"'] &|[alias=f]| k \arrow[d, "v"] \\ 
		|[alias=t]|j \arrow[r, "q"'] & l 
		\arrow[from=f,to=t, Rightarrow, "\alpha"]
	\end{tikzcd}
\]
 and a left Kan extension diagram
\[
	\begin{tikzcd}
		k \arrow[dr, "f"name=f]\arrow[d,"v"']   \\
		|[alias=t]|l \arrow[r, "g"'] & x
		\arrow[from=f, to=t, Rightarrow, shorten <= 6pt, "\eta"]
	\end{tikzcd}
\]
in $\Vert(\P)$.
Under suitable conditions, the restriction $gq$ is the left Kan extension 
of the restriction $fp$ along $w$, and the associated comparison 2-cell $fp \rightarrow gqw$ 
is computed by whiskering and composing the comparison 2-cells above
$$
	fp \xrightarrow{\eta p} gvp \xrightarrow{g\alpha} gqw.
$$
To formulate this, we introduce the following terminology in our context of $\infty$-equipments (cf.\ \cite[Definition 9.2.2]{RiehlVerity}):

\begin{definition}\label{def.exact-squares}
	Suppose that we have a lax square
	\[
		\sigma = \begin{tikzcd}
			i \arrow[r, "p"]\arrow[d,"w"'] &|[alias=f]| k \arrow[d, "v"] \\ 
			|[alias=t]|j \arrow[r, "q"'] & l 
			\arrow[from=f,to=t, Rightarrow, "\alpha"]
		\end{tikzcd}
	\]
	in $\Vert(\P)$. Then we obtain an induced comparison 2-cell 
	$$
	\phi : p_\circledast w^\circledast \rightarrow v^\circledast q_\circledast
	$$
	that fits in the unique factorization 
	\[
		\begin{tikzcd}
			i \arrow[r, equal, ""name=f]\arrow[d,"w"'] & i \arrow[d, "p"]\\
			j\arrow[d,"q"'] & k\arrow[d, "v"] \\
			l \arrow[r, equal, ""name=t] & l
			\arrow[from=f, to=t, "\scriptstyle\alpha", phantom]
		\end{tikzcd}
		\simeq 
		\begin{tikzcd}
			i \arrow[r, equal, ""name=h1]\arrow[d,"w"'] & i \arrow[d, "p"]\\
			j \arrow[r, "p_\circledast w^\circledast"name=h2]\arrow[d,equal] & k\arrow[d,equal]\\
			j \arrow[r, "v^\circledast q_\circledast"'name=h3]\arrow[d,"q"'] & k \arrow[d,"v"]\\
			l \arrow[r, equal, ""name=h4] & l
			\arrow[from=h1,to=h2,Rightarrow, shorten <= 6pt]
			\arrow[from=h2,to=h3, "\scriptstyle\phi", phantom]
			\arrow[from=h3,to=h4,Rightarrow]
		\end{tikzcd}		
	\]
	in $\P$
	so that the top and bottom 2-cells on the right are cocartesian and cartesian respectively. 
	The lax square $\sigma$ is said to be \textit{exact} if $\phi$
	is an equivalence in $\Hor(\P)(j,k)$.
\end{definition}

\begin{remark}	
The terminology of exact squares dates back to Hilton \cite{Hilton}, and was originally introduced in the context 
of abelian categories. It was later 
further categorified by Guitart \cite[Section 4]{Guitart}, where similar phenomena  are studied in the Yoneda structures of Street and Walters \cite{StreetWalters}. 
\end{remark}

\begin{example}
	A map $f : x \rightarrow y$ in $\P$ is injective if and only if the following commutative square is exact:
	\[
		\begin{tikzcd}
			x \arrow[r ,equal]\arrow[d,equal] & x \arrow[d,"f"]\\
			x \arrow[r, "f"] & y,
		\end{tikzcd}
	\]
	see \cref{rem.injection}.
\end{example}

\begin{example}
	Suppose that 
	$u : x \rightleftarrows y :v$
	is an adjunction in $\Vert(\P)$ with unit $\eta$ and counit $\epsilon$. Then one may readily verify that
	the following two lax squares are exact:
	\[
		\begin{tikzcd}
			x \arrow[r, equal ]\arrow[d,"u"'] &|[alias=f]| x \arrow[d, equal] \\ 
			|[alias=t]|y \arrow[r, "v"'] & x,
			\arrow[from=f,to=t, Rightarrow, "\eta"]
		\end{tikzcd}
		\quad 
		\begin{tikzcd}
			y \arrow[r, "v"]\arrow[d,equal] &|[alias=f]| x \arrow[d, "u"] \\ 
			|[alias=t]|y \arrow[r, equal] & y.
			\arrow[from=f,to=t, Rightarrow, "\epsilon"]
		\end{tikzcd}
	\]
\end{example}

\begin{proposition}\label{prop.exact-squares}
	Suppose that we have an exact square
	\[
		\begin{tikzcd}
			i \arrow[r, "p"]\arrow[d,"w"'] &|[alias=f]| k \arrow[d, "v"] \\ 
			|[alias=t]|j \arrow[r, "q"'] & l, 
			\arrow[from=f,to=t, Rightarrow, "\alpha"]
		\end{tikzcd}
	\]
 	in $\Vert(\P)$. Then the following is true: 
	\begin{enumerate}
		\item If a diagram \[
			\begin{tikzcd}
				k \arrow[dr, "f"name=f]\arrow[d,"v"']  \\
				|[alias=t]|l \arrow[r, "g"'] & x
				\arrow[from=f, to=t, Rightarrow, shorten <= 6pt, "\eta"]
			\end{tikzcd}
		\] exhibits a left Kan extension, then the pasting of $\alpha$ and $\eta$ defines a triangle
		\[
			\begin{tikzcd}
				i \arrow[dr, "fp"name=f]\arrow[d,"w"']  \\
				|[alias=t]|j \arrow[r, "gq"'] & x
				\arrow[from=f, to=t, Rightarrow, shorten <= 6pt, "\omega"]
			\end{tikzcd}
			:= 
			\begin{tikzcd}
				i \arrow[r,"p"]\arrow[d,"w"'] &|[alias=f0]| k \arrow[dr, "f"name=f]\arrow[d,"v"']  \\
				|[alias=t0]| j \arrow[r, "q"'] & |[alias=t]|l \arrow[r, "g"'] & x
				\arrow[from=f, to=t, Rightarrow, shorten <= 6pt, "\eta"]
				\arrow[from=f0,to=t0, Rightarrow, "\alpha"]
			\end{tikzcd}
		\]
		that exhibits a left Kan extension as well.
		\item Conversely, suppose that $q$ is an equivalence. If the left Kan extension of $fp$ along $w$ exists, then 
		the left Kan extension of $f$ along $v$ exists.
	\end{enumerate}
\end{proposition}
\begin{proof}
	Let us write $\phi : p_\circledast w^\circledast \to v^\circledast q_\circledast$ for the 2-cell of \cref{def.exact-squares}.
	We will start by showing (1).
	Under the conjoint embedding, $\eta$ corresponds to a horizontal 2-cell $v^\circledast g^\circledast \to f^\circledast$. 
	This represents a map $\bar{\eta} : \map_{\Hor(\P)(x,l)}(-, g^\circledast) \to \map_{\Hor(\P)(x,k)}(v^\circledast (-), f^\circledast)$
	which is an equivalence if and only $\eta$ exhibits a left Kan extension. We may now consider the composite
	\begin{align*}
	\map(-,q^\circledast g^\circledast) \xrightarrow{\simeq} \map(q_\circledast(-), g^\circledast) \xrightarrow{\bar{\eta}q_\circledast^*} \map(v^\circledast q_\circledast (-), f^\circledast) &\xrightarrow{\phi^*} \map(p_\circledast w^\circledast(-), f^\circledast) \\ 
	& \xrightarrow{\simeq} \map(w^\circledast(-), p^\circledast f^\circledast).
	\end{align*}
	We claim that a straightforward diagram chase shows that this is represented by $\omega$ under the conjoint embedding. Since the above composite 
	is an equivalence, this would then imply that $\omega$ exhibits a left Kan extension as desired.
	To verify the remaining claim, 
	we note that the above composite is represented by the following composite of 2-cells $$\omega' : w^\circledast q^\circledast g^\circledast \to p^\circledast p_\circledast w^\circledast q^\circledast g^\circledast \to p^\circledast v^\circledast q_\circledast q^\circledast g^\circledast \to p^\circledast v^\circledast g^\circledast \to p^\circledast f^\circledast$$
	 where the first comes from the canonical 2-cell $\id \to p^\circledast p_\circledast$, the second is induced from $\phi$ by whiskering, the third  
	comes from the canonical 2-cell $q_\circledast q^\circledast \to \id$, and the fourth comes from whiskering $\eta$ and $p$ under the conjoint embedding. A straight-forward manipulation with companionship and conjunction 
	triangle identities will show that composing just the first three 2-cells, the resulting 2-cell $w^\circledast q^\circledast g^\circledast \to p^\circledast v^\circledast g^\circledast$ is 
	obtained under the conjoint embedding by whiskering $\alpha$ with $g$. Thus we conclude that $\omega'$ is the image of $\omega$ under the conjoint embedding.

	We will now show (2). Suppose that $g : j \to x$ is the left Kan extension of $fp$ along $w$. Then we obtain natural equivalences 
	$$
	\map(-, g^\circledast) \simeq \map(w^\circledast(-), p^\circledast f^\circledast) \simeq \map(p_\circledast w^\circledast (-), f^\circledast) \simeq 
	\map(v^\circledast q_\circledast(-),f^\circledast).
	$$
	Hence, if $r : l\to j$ is an inverse to $q$, then $gr$ must be the left Kan extension of $f$ along $v$.
\end{proof}

\begin{corollary}\label{cor.exact-square-mate}
	Suppose that
	\[
		\begin{tikzcd}
			i \arrow[r, "p"]\arrow[d,"w"'] &|[alias=f]| k \arrow[d, "v"] \\ 
			|[alias=t]|j \arrow[r, "q"'] & l 
			\arrow[from=f,to=t, Rightarrow, "\alpha"]
		\end{tikzcd}
	\]
	is an exact square in $\Vert(\P)$. Let $x\in \P$, and suppose 
	that all arrows $i \rightarrow x$ and $j \rightarrow x$ admit left Kan extensions 
	along $w$ and $v$ respectively. Then 
	the mate transformation 
	$$
	w_!p^* \rightarrow w_!p^*v^*v_! \rightarrow w_!w^*q^*v_! \rightarrow q^*v_!
	$$
	between functors $\Vert(\P)(k,x) \rightarrow \Vert(\P)(k,x)$
	is invertible.
\end{corollary}

\begin{example}\label{rem.proper-smooth}
	Consider a pullback square
	\[
		\begin{tikzcd}
			I \arrow[r, "p"]\arrow[d,"w"'] & K \arrow[d, "v"] \\ 
			J \arrow[r, "q"] & L
		\end{tikzcd}
	\]
	of $\infty$-categories. If $q$ is smooth or $v$ is proper as in Definitions 4.4.1 and 4.4.15 of \cite{Cisinski}, 
	then this square is exact in $\infty\CCAT$. This may be deduced from \cite[Proposition 6.4.3]{Cisinski}.
\end{example}

\subsection{Initiality and finality}\label{ssection.initial-final} In the context of $\infty$-categories, there is a notion 
of \textit{initial} and \textit{final} functors \cite[Section 4.1]{HTT}. Under an extra assumption 
on the $\infty$-equipment $\P$, we may develop a synthetic notion of initiality and finality in $\P$ as well.

\begin{definition}\label{def.pointed-equipment}
	An object $\ast \in \P$ is called a \textit{vertical terminal object} if 
	$\Vert(\P)(x,\ast) \simeq [0]$
	for all $x \in \P$.
\end{definition}

\begin{definition}\label{def.conical-colims}
	Suppose that $\P$ has a vertical terminal object. We define the following of special weights associated to $i \in \P$: 
	\begin{itemize}
		\item the \textit{coconical weight} $\Delta^i$ is the conjoint of the unique arrow $i \rightarrow \ast$,
		\item the \textit{conical weight} $\Delta_i$ is the companion of the unique arrow $i \rightarrow \ast$.
	\end{itemize} If the colimit (resp.\ limit) of a diagram $f : i \rightarrow x$ weighted by the coconical weight (resp.\ conical weight) exists, then 
	these are called \textit{conical}, and we denote them by $\colim f$ (resp.\ $\lim f$).
\end{definition}

\begin{example}
	The category $[0]$ is a vertical terminal object for $\infty\CCAT$. More generally, for every complete $\infty$-category $\E$ with universal geometric realizations, $\CCAT(\E)$ has a vertical terminal object
	given by the simplicial object that is constant at the terminal object of $\E$.
\end{example}

\begin{proposition}\label{prop.initiality-finality}
	Suppose that $\P$ has a vertical terminal object $\ast \in \P$.
	Let $f : i \rightarrow j$ be an arrow in $\P$. Then the following statements are equivalent:
	\begin{enumerate}
		\item there exists an equivalence $f_\circledast \Delta^i \simeq \Delta^j$ in  $\Hor(\P)(\ast, j)$,
		\item the commutative square
		\[
			\begin{tikzcd}
				i \arrow[r,"f"]\arrow[d] & j \arrow[d] \\ 
				\ast \arrow[r, equal] & \ast
			\end{tikzcd}
		\]
		is exact.
	\end{enumerate}
	Dually, the following statements are equivalent:
	\begin{enumerate}[label=(\roman*)]
		\item there exists an equivalence $\Delta_i f^\circledast  \simeq \Delta_j$ in $\Hor(\P)(j, \ast)$,
		\item the commutative square
		
		\[
			\begin{tikzcd}
				i \arrow[r]\arrow[d, "f"'] & \ast \arrow[d,equal] \\ 
				j \arrow[r] & \ast
			\end{tikzcd}
		\]
		is exact.
	\end{enumerate}
\end{proposition}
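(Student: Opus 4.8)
The plan is to reduce both equivalences to a single structural fact: the (co)conical weights are \emph{terminal} objects in the relevant mapping $\infty$-categories of proarrows. So I would first isolate and prove the key lemma that for every object $y$ of $\P$ the coconical weight $\cocart{}{\Delta}{y}$ is terminal in $\Hor(\P)(\ast, y)$, and dually that the conical weight $\cocart{y}{\Delta}{}$ is terminal in $\Hor(\P)(y, \ast)$. Once this is available, the proposition becomes formal.

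To establish terminality of $\cocart{}{\Delta}{y}$ I would use that, being the conjoint of the essentially unique arrow $u : y \to \ast$, it appears as the top proarrow of the cartesian defining cell (cf.\ \ref{cor.compconj-roof-niches})
\[
	\begin{tikzcd}
		\ast \arrow[r, "\cocart{}{\Delta}{y}"]\arrow[d,equal] & y \arrow[d, "u"] \\
		\ast \arrow[r,equal] & \ast.
	\end{tikzcd}
\]
For an arbitrary proarrow $H : \ast \to y$ I would then invoke the universal property \ref{prop.cart-niche-fillers}(1) of this cartesian cell: stacking a $2$-cell on top of it identifies the space of $2$-cells $H \Rightarrow \cocart{}{\Delta}{y}$ lying in the fibre $\Hor(\P)(\ast, y)$ (that is, with identity vertical boundary) with the corresponding niche/frame data, namely $H$ together with a vertical endomorphism of $y$ covering the configuration. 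The crucial point is that $\ast$ is terminal in $\Vert(\P)$, so once we restrict to identity vertical boundary there is no freedom left in this data and the comparison space collapses to the contractible space containing $H$ alone. This gives $\map_{\Hor(\P)(\ast,y)}(H, \cocart{}{\Delta}{y}) \simeq \ast$ for all $H$, i.e.\ terminality. The terminality of $\cocart{y}{\Delta}{}$ in $\Hor(\P)(y,\ast)$ follows verbatim, now reading $\cocart{y}{\Delta}{}$ as the top of the cartesian cell exhibiting the companion of $u$.

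With the lemma in hand, both equivalences are soft. Unwinding \ref{def.exact-squares} for the square in (2) — together with the notational conventions following \ref{prop.compconj-and-cart-fillers} — the induced comparison $2$-cell is exactly a morphism $\cocart{f}{\Delta}{i} \to \cocart{}{\Delta}{j}$ in $\Hor(\P)(\ast, j)$, and exactness asserts that this morphism is an equivalence; this immediately yields (1). Conversely, assuming (1), the object $\cocart{f}{\Delta}{i}$ is equivalent to the terminal object $\cocart{}{\Delta}{j}$, hence is itself terminal, so the comparison $2$-cell is a map between terminal objects and is therefore automatically an equivalence, which is (2). The dual equivalence (1$'$)$\Leftrightarrow$(2$'$) is obtained identically, after identifying the relevant comparison with a morphism $\cocart{i}{\Delta}{f} \to \cocart{j}{\Delta}{}$ into the terminal object $\cocart{j}{\Delta}{}$ of $\Hor(\P)(j,\ast)$.

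I expect the main obstacle to lie entirely in the terminality lemma, specifically in the bookkeeping of matching the universal property \ref{prop.cart-niche-fillers}(1) of the cartesian defining cell against the fibre mapping space, and in verifying that restricting to identity vertical boundary genuinely trivialises the leftover niche data (which is where terminality of $\ast$ in $\Vert(\P)$ enters). The remainder — the passage from ``an abstract equivalence exists'' to ``the canonical comparison is an equivalence'' — is then immediate from the uniqueness of maps into a terminal object.
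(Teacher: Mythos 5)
Your reduction of the proposition to the claim that the (co)conical weights are \emph{terminal} objects of $\Hor(\P)(\ast,y)$ (resp.\ $\Hor(\P)(y,\ast)$) does not work: that claim is false for a general pointed $\infty$-equipment, and the argument you sketch for it misreads the universal property of the cartesian cell. By \ref{prop.cart-niche-fillers} (or \ref{rem.proarrow-interchange}), pasting with the cartesian cell defining $\cocart{}{\Delta}{y} = \cart{u}{\ast}{}$ identifies $\map_{\Hor(\P)(\ast,y)}(H, \cocart{}{\Delta}{y})$ with the space of 2-cells filling the frame with top $H$, bottom $\id_\ast$, left $\id_\ast$ and right $u : y \rightarrow \ast$; it does not discard the filler and leave only ``frame data''. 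Pointedness gives no control over this space of fillers: the condition $\Vert(\P)(x,\ast) \simeq [0]$ only concerns cells whose top and bottom edges are horizontal identities. Concretely, consider the strict double category with a single object $\ast$, only the identity vertical arrow, horizontal arrows given by the monoid $(\mathbb{N},+)$, and a unique 2-cell $m \Rightarrow n$ precisely when $m \leq n$. This is a vertically complete double $\infty$-category in which the unique vertical arrow has companion and conjoint $0$, so it is an $\infty$-equipment, and it is pointed since $\Vert(\P)(\ast,\ast) \simeq [0]$; but the coconical weight $0$ is not terminal in $\Hor(\P)(\ast,\ast) = (\mathbb{N},\leq)$, since there is no 2-cell $m \Rightarrow 0$ for $m \geq 1$. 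So your key lemma fails, and with it the step ``(1) implies the comparison cell is a map between terminal objects''.

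What is true, and what the paper's proof uses instead, is the much weaker statement that the \emph{endomorphisms} of the coconical weight are trivial: since the conjoint embedding is locally fully faithful and $\ast$ is final in $\Vert(\P)$, one has $\map_{\Hor(\P)(\ast,j)}(\cocart{}{\Delta}{j}, \cocart{}{\Delta}{j}) \simeq \map_{\Vert(\P)(j,\ast)}(u,u) \simeq \ast$. Granting (1), the space of maps $\cocart{f}{\Delta}{i} \rightarrow \cocart{}{\Delta}{j}$ is therefore contractible, so the canonical exactness comparison cell is homotopic to the given equivalence and hence is itself an equivalence; this yields (1) $\Rightarrow$ (2), while (2) $\Rightarrow$ (1) is immediate exactly as you say, and the dual case is identical. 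In the counterexample above this weaker statement visibly holds ($\map(0,0)$ is a point), which is why the proposition survives even though terminality does not. Your argument would be repaired by replacing ``terminal'' with ``has contractible mapping space from anything equivalent to it, via the conjoint embedding'' throughout.
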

\begin{proof}
	We will show that assertions (1) and (2) are equivalent. 
	The other assertions are handled similarly.
	Suppose that (1) holds. Unfolding the definition of exactness, we must show that 
	the canonical 2-cell
	$f_\circledast \Delta^i \rightarrow \Delta^j$ is an equivalence. By assumption, 
	the proarrows on either side are in the essential image of
	the fully faithful conjoint embedding $[0] \simeq \Vert(\P)(j, \ast)^\op \rightarrow \Hor(\P)(\ast, j)$, 
	and hence the canonical 2-cell must be an equivalence. It follows immediately that (2) implies (1) as well.
\end{proof}

\begin{definition}\label{def.initiality-finality}
	Suppose that $\P$ has a vertical terminal object $\ast \in \P$. Let $f : i \rightarrow j$ be an arrow in $\P$.
	Then $f$ is called \textit{final} if one of the equivalent conditions (1) or (2) of \cref{prop.initiality-finality} are met. Dually, 
	$f$ is called \textit{initial} if conditions (i) or (ii) of \cref{prop.initiality-finality} hold.
\end{definition}

We will study final arrows in the remainder of this section, but the dual results for initial arrows can be obtained analogously.

\begin{proposition}
	Suppose that $\P$ has a vertical terminal object $\ast \in \P$. Let $f : i \to j$ be a final arrow in $\P$.
	Then for every arrow $g :j \to x$, the colimit of $g$ exists if and only if the  colimit of $gf$ exists, and in this case, the 
		canonical map 
		$$
		\colim gf \rightarrow \colim g
		$$
		is an equivalence.
\end{proposition}
\begin{proof}
	Suppose that the colimit of $g$ exists. Then this comes with a left Kan extension diagram
	\[
		\begin{tikzcd}
			i \arrow[dr, "f"name=f]\arrow[d] \\
			|[alias=t]|\ast \arrow[r, "\colim g"'] & x.
			\arrow[from=f,to=t,Rightarrow,shorten <= 6pt]
		\end{tikzcd}
	\]
	If we paste this diagram with the exact square that appears in (2) of \cref{prop.initiality-finality}, 
	then part (1) of \cref{prop.exact-squares} implies that we obtain a triangle that exhibits $\colim g$ to be the colimit of $gf$.
	Conversely, if the colimit of $gf$ exists then we may apply part (2) of 
	\cref{prop.exact-squares} to the exact square appearing in (2) of \cref{prop.initiality-finality}.
\end{proof}

As in the case of final arrows between $\infty$-categories \cite[Proposition 4.1.1.3]{HTT}, 
final arrows in an $\infty$-equipment have a cancellation property:

\begin{proposition}
	Suppose that $f : i \rightarrow j$ and $g : j \rightarrow k$ are arrows in a $\infty$-equipment $\P$ with a vertical terminal object.
	If $f$ is final, then the composite $gf$ is final if and only if $g$ is final.
\end{proposition}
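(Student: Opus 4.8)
The plan is to reduce both the final and the initial statement to a one-line computation, after rephrasing (in)itiality as an equivalence of (co)conical weights and invoking the functoriality of the companion and conjoint embeddings. First I would record how the defining condition of \ref{prop.initiality-finality} reads once the notation is unwound. By condition (1) of \ref{prop.initiality-finality}, an arrow $f : i \rightarrow j$ is final precisely when there exists an equivalence $\cocart{f}{\Delta}{i} \simeq \cocart{}{\Delta}{j}$ in $\Hor(\P)(\ast, j)$; and by the conventions following \ref{prop.compconj-and-cart-fillers} (together with \ref{def.pointed-equipment}) we have $\cocart{f}{\Delta}{i} = \cart{}{j}{f} \circ \cocart{}{\Delta}{i}$, the composite of the companion of $f$ with the coconical weight of $i$. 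Thus finality of $f$ is equivalent to the existence of an equivalence
\[
\cart{}{j}{f} \circ \cocart{}{\Delta}{i} \simeq \cocart{}{\Delta}{j}.
\]
Dually, initiality of $f$ unwinds to an equivalence $\cocart{i}{\Delta}{} \circ \cart{f}{j}{} \simeq \cocart{j}{\Delta}{}$, obtained by precomposing the conical weight of $i$ with the conjoint of $f$. It suffices to exhibit any such equivalence, this being precisely condition (1) (resp.\ (1$'$)) used in \ref{def.initiality-finality}.

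The only external input I would use is that companions and conjoints compose. Since the companion embedding $\Vert(\P) \rightarrow \Hor(\P)$ and the conjoint embedding $\Vert(\P) \rightarrow \Hor(\P)^{1,2-\op}$ are maps of $2$-fold Segal spaces sending vertical arrows to companions, respectively conjoints, they preserve composition. Hence for $f : i \rightarrow j$ and $g : j \rightarrow k$ one obtains
\[
\cart{}{k}{gf} \simeq \cart{}{k}{g} \circ \cart{}{j}{f}, \qquad \cart{gf}{k}{} \simeq \cart{f}{j}{} \circ \cart{g}{k}{},
\]
the reversal of order in the conjoint identity arising from the horizontal opposite in the target of the conjoint embedding.

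With these two facts in hand the argument is a short chain of equivalences. Assuming $f$ final, so that $\cart{}{j}{f} \circ \cocart{}{\Delta}{i} \simeq \cocart{}{\Delta}{j}$, I would compute
\[
\cart{}{k}{gf} \circ \cocart{}{\Delta}{i} \simeq \cart{}{k}{g} \circ \cart{}{j}{f} \circ \cocart{}{\Delta}{i} \simeq \cart{}{k}{g} \circ \cocart{}{\Delta}{j}.
\]
The left-hand side is equivalent to $\cocart{}{\Delta}{k}$ exactly when $gf$ is final, and the right-hand side is equivalent to $\cocart{}{\Delta}{k}$ exactly when $g$ is final; reading the displayed equivalence in both directions gives "$gf$ final $\iff$ $g$ final." The initial case is identical after replacing the companion composition by the conjoint composition $\cart{gf}{k}{} \simeq \cart{f}{j}{} \circ \cart{g}{k}{}$ and composing on the opposite side, starting from $\cocart{i}{\Delta}{} \circ \cart{f}{j}{} \simeq \cocart{j}{\Delta}{}$. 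I expect the only real friction to be bookkeeping: pinning down the unwinding of the $\cocart{f}{\Delta}{i}$-notation into a companion composite (and its conjoint dual, with the order reversal), and checking that the composition law supplied by functoriality of the embeddings is exactly the one needed. Once this reformulation is fixed, no genuine computation remains.
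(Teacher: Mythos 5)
Your proof is correct and is essentially the paper's own argument: the paper's one-line computation $\cocart{gf}{\Delta}{i}\simeq \cocart{g}{(\cocart{f}{\Delta}{i})}{} \simeq \cocart{g}{\Delta}{j}$ is exactly your chain of equivalences, with the first step being the compositionality of companions that you justify via the companion embedding, and the second being finality of $f$. The initial case is handled dually in both, so there is nothing substantive to add.
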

\begin{proof}
	If $f$ is final, we have
	$
	(gf)_\circledast \Delta^i \simeq g_\circledast \Delta^j.
	$
	From this computation, the desired conclusion follows immediately.
\end{proof}

We can also give an abstract analogue of \textit{Quillen's theorem A} \cite[Subsection 4.1.3]{HTT}.

\begin{definition}\label{def.strongly-pointed}
	A \textit{strong vertical terminal object} $\ast \in \P$ is a vertical terminal object 
	so that
	 for any proarrow $F:x\rightarrow y$, the following is true:
	\begin{enumerate}
		\item $F$ is a companion if and only if for every arrow $p : \ast \rightarrow x$, the proarrow $Fp_\circledast$ is a companion,
		\item $F$ is a conjoint if and only if for every arrow $p : \ast \rightarrow x$, the proarrow $p^\circledast F$ is a conjoint.
	\end{enumerate}
\end{definition}

\begin{example}\label{ex.ccat-strongly-pointed}
	The vertical terminal object of $\infty\CCAT$ is strong. This follows from the Yoneda lemma. 
	Indeed, from the perspective of profunctors, a proarrow $F : \C \rightarrow \D$ is a companion 
	if and only if the associated functor $\C \rightarrow \PSh(\D)$ factors through the Yoneda embedding $\D \rightarrow \PSh(\D)$. Since the Yoneda embedding 
	is fully faithful, one just has to check that for every object $p \in \C$, the evaluation in $p$, computed by $Fp_\circledast$, is representable. 
	A similar argument shows that condition (2) also holds. 
	
	As a warning, we not that this is not true in general: if $\E$ is an arbitrary $\infty$-topos, then
	the vertical terminal object of $\CCAT(\E)$ is \textit{virtually never} strong.
\end{example}

\begin{proposition}\label{prop.quillen-A}
	Suppose that $\P$ has a strong vertical terminal object $\ast \in \P$.
	Then an arrow $f: i \rightarrow j$ in $\P$ is final
	if and only if for any $p : \ast \rightarrow j$, we have
	$$
	p^\circledast f_\circledast \Delta^i \simeq \Delta^\ast 
	$$
	in $\Hor(\P)(\ast, \ast)$.
\end{proposition}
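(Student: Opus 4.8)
The plan is to reduce the statement to a pure conjointness criterion and then feed it into the strong pointedness of $\P$. The pivotal observation is a uniqueness principle: since $\ast$ is terminal in $\Vert(\P)$, for any object $k$ of $\P$ there is an essentially unique arrow $k \to \ast$, so the coconical weight $\cocart{}{\Delta}{k}$ (its conjoint) is, up to equivalence, the \emph{only} conjoint with source $\ast$ and target $k$. Indeed, any conjoint $\ast \to k$ is by definition the conjoint of some arrow $k \to \ast$, and any two such arrows agree by terminality; moreover two conjoints of the same arrow are both right adjoint to its companion in $\Hor(\P)$ by \ref{prop.conj-comp-adjunction}, hence equivalent by the uniqueness of adjoints \ref{prop.uniqueness-adjoints}. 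I would record this as a lemma: \emph{a proarrow $F : \ast \to k$ is a conjoint if and only if $F \simeq \cocart{}{\Delta}{k}$} (the backward direction being immediate, as conjointness is invariant under equivalence of proarrows).

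Applying this principle with $k = j$ rephrases finality. By \ref{prop.initiality-finality}(1), $f$ is final if and only if $\cocart{f}{\Delta}{i} \simeq \cocart{}{\Delta}{j}$, and the lemma turns this into the assertion that $\cocart{f}{\Delta}{i}$ \emph{is a conjoint}. This is the crucial translation: it converts an equivalence of specific proarrows into the representability property that strong pointedness is built to detect.

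Next I would apply the strong pointedness of $\P$ to the proarrow $\cocart{f}{\Delta}{i} : \ast \to j$: condition (2) in the definition of strongly pointed says that this proarrow is a conjoint if and only if $\cart{p}{(\cocart{f}{\Delta}{i})}{}$ is a conjoint for every $p : \ast \to j$. Finally, applying the uniqueness lemma once more, now with $k = \ast$, a proarrow $\ast \to \ast$ is a conjoint precisely when it is equivalent to $\cocart{}{\Delta}{\ast}$; since $\cocart{}{\Delta}{\ast}$ is the conjoint of $\id_\ast$ and therefore the horizontal unit $\ast$ of $\Hor(\P)(\ast,\ast)$, this says exactly that $\cart{p}{(\cocart{f}{\Delta}{i})}{} \simeq \ast$. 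Chaining the three equivalences gives the claim.

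The step requiring genuine care is the uniqueness lemma for conjoints out of $\ast$, i.e.\ that terminality of $\ast$ pins the coconical weight down as the unique conjoint into each object; once this is established the remainder is bookkeeping with the definitions. One should also verify the identification of $\cocart{}{\Delta}{\ast}$ with the horizontal identity proarrow on $\ast$, which holds because the companion and conjoint of an identity arrow are both the horizontal unit.
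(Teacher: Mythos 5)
Your proof is correct and follows essentially the same route as the paper: the paper's proof simply cites its remark that any conjoint in $\Hor(\P)(\ast,i)$ is a coconical weight (which is exactly your uniqueness lemma) together with the definition of strong pointedness, and your argument just spells out these two ingredients, including the identification of $\cocart{}{\Delta}{\ast}$ with the horizontal unit. The only difference is that you supply the justification (via \ref{prop.conj-comp-adjunction} and \ref{prop.uniqueness-adjoints}) for the uniqueness lemma that the paper leaves implicit.
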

\begin{proof}
	Note that $f_\circledast \Delta^i$ is equivalent to $\Delta^\ast$ if and only if it is a conjoint. Hence, the result 
	follows from the fact that $\ast$ is strong.
\end{proof}

\begin{example}
	Take $\P = \infty\CCAT$.
	Suppose that $f : I \rightarrow J$ is a functor between $\infty$-categories.
	Let $p : [0] \rightarrow J$ be a functor that corresponds to an object $j \in J$. From the perspective of two-sided discrete fibrations, 
	$p^\circledast f_\circledast : I \rightarrow [0]$ is given by the left fibration
	$$
	I \times_J j/J \rightarrow I.
	$$
	In light of \cref{prop.quillen-A}, $f$ is final if and only if 
	$$
	p^\circledast f_\circledast \Delta^I \simeq (I \times_J j/J)[(I \times_J j/J)^{-1}] \simeq \ast
	$$
	in $\Hor(\infty\CCAT)([0], [0]) \simeq \DFib([0],[0]) \simeq \S$ for every $j \in J$. Here 
	we used \cref{prop.ccat-hor-composition}. This is precisely the content of Quillen's theorem A for $\infty$-categories, and this 
	shows that \cref{def.initiality-finality} recovers the usual notion of final functors between $\infty$-categories.
\end{example}

\nocite{*}
\bibliographystyle{amsalpha}
\bibliography{EquipmentsI}

\end{document}